\DeclareMathOperator{\inte}{int}
\newtheorem{theorem}{Theorem}[section]
\newtheorem{lemma}[theorem]{Lemma}
\newtheorem{corollary}[theorem]{Corollary}
\newtheorem{case}{Case}
\newtheorem*{claim*}{Claim}
\theoremstyle{definition}
\newtheorem{definition}[theorem]{Definition}
\numberwithin{table}{section}   
\numberwithin{equation}{section}
\newcommand{\abs}[1]{\left\vert#1\right\vert}
\newcommand{\bit}{\begin{itemize}}
\newcommand{\eit}{\end{itemize}}
\newcommand{\ben}{\begin{enumerate}}
\newcommand{\een}{\end{enumerate}}
\newcommand{\beq}{\begin{equation}}
\newcommand{\eeq}{\end{equation}}
\newcommand{\bea}{\begin{eqnarray*}}
\newcommand{\eea}{\end{eqnarray*}}
\newcommand{\bean}{\begin{eqnarray}}
\newcommand{\eean}{\end{eqnarray}}
\newcommand{\bpf}{\begin{proof}}
\newcommand{\epf}{\end{proof}\ms}
\newcommand{\bmt}{\begin{bmatrix}}
\newcommand{\emt}{\end{bmatrix}}
\newcommand{\ms}{\medskip}
\newcommand{\beqa}{\begin{array}}
\newcommand{\eeqa}{\end{array}}
\title{Landscapes of the Tetrahedron and Cube: An Exploration of Shortest Paths on Polyhedra}
\author{McKenzie Fontenot\thanks{Department of Mathematics, University of North Texas, Denton, TX 76205 (kenzie.fontenot@unt.edu).}\and Erin Raign\thanks{Department of Mathematics, University of North Texas, Denton, TX 76205
(erinraign@my.unt.edu).}\and August Sangalli \thanks{Department of Mathematics, University of Denver, Denver, CO 80210 (gus.sangalli@du.edu).} \and Emiko Saso  \thanks{Department of Mathematics, Trinity College, Hartford, CT 06106 (emiko.saso@trincoll.edu).} \and Houston Schuerger \thanks{Department of Mathematics, Trinity College, Hartford, CT 06106 (houston.schuerger@trincoll.edu).} \and Xin Shi \thanks{Department of Mathematics, Trinity College, Hartford, CT 06106 (xin.shi@trincoll.edu).} \and Ethan Striff-Cave \thanks{Department of Mathematics, Williams College, Williamstown, MA 01267 (ecs11@williams.edu)}}
\begin{document}

\maketitle

\begin{abstract}
We consider the problem of determining the length of the shortest paths between points on the surfaces of tetrahedra and cubes.  Our approach parallels the concept of Alexandrov's star unfolding but focuses on specific polyhedra and uses their symmetries to develop coordinate based formulae.  We do so by defining a coordinate system on the surfaces of these polyhedra. Subsequently, we identify relevant regions within each polyhedron's nets and develop formulae which take as inputs the coordinates of the points and produce as an output the distance between the two points on the polyhedron being discussed.
\end{abstract}

\section {Introduction}
There is a rich history of geodesics on polyhedra stemming from the well-known fact that the shortest path between two points on the surface of a convex polyhedron restricted to the polyhedron is contained as a straight line segment in one of the polyhedron's nets. Building on this fact, we also have Alexandrov's star unfolding which provides a process for determining the shortest distance between any two points on the surface of a convex polyhedron  \cite{Alexandrov}. It does so by fixing an initial point and constructing a figure which identifies shortest paths between said source point and the remaining points of the polyhedron. However, this method of fixing an initial point and constructing the star unfolding is relatively laborious for the end user, since the process must be repeated for each new choice of the source point. On the other hand, this paper provides coordinate system based formulae, thus placing the weight of the calculation on determining these formulae for the different polyhedra rather than in the final computation of the distance performed by the end user.

Dijkstra's Algorithm provides another method of calculating the shortest paths on polyhedral surfaces, thus doing so for a larger class of polyhedra \cite{Cook}. This process involves approximating polyhedral surfaces using meshes, and thus turns a problem of geometry into one of graph theory and algorithms. While calculations through this method can be quick with a computer, calculating by hand is very difficult and can often lead to a large margin of error. 

Problems involving the optimization of pathways are critical for a number of applications across different fields. The general problem stemming from optimizing collision-free paths for robots in the realm of numerical analysis is studied in \cite{Agarwal}. In graph theory, Dijkstra's algorithm is used to look at the shortest paths on polyhedral surfaces, noting applications in robotics, geographic information systems, and route finding in \cite{Kanai}. The same type of problem relating to robotics and motion planning is explored by using sequence trees in \cite{Chen}. An approach based in finding the shortest path on a polyhedral surface is used to model a network design problem with applications in telecommunications and transportation in \cite{Magnanti}. In this paper, rather than find an algorithm to be applied, we provide our polyhedra with a coordinate system so that exact formulae can be derived. As a result, once these formulae are obtained, numerical calculations can be easily performed by anyone wishing to apply these findings.

In order to develop these formulae, we will utilize nets to calculate the shortest distance between points along the surface of convex regular polyhedra. Specifically, we will restrict our view to the paths lying along the surfaces of cubes and tetrahedra. With this goal in mind, we will define new net substructures, and using them identify where these shortest paths can lie as well as the lengths of these paths (for any two given points).  

To ensure a clear discussion,  we now provide some basic definitions and common conventions.  A {\em net} is a 2-dimensional polygonal shape that can be folded along prescribed line segments interior to the polygon to form the surface of a 3-dimensional polyhedron. As a result of this folding, the collection of edges of the net (both those in the interior of the polygon and the line segments forming its exterior edges) become edges of the polyhedron, the vertices of the net (the endpoints of these line segments) become the vertices of the polyhedron, and the regions bounded by these edges and vertices (which are referred to as the faces of the net) become the faces of the polyhedron.  For a net $N$ of a polyhedron $\mathcal{P}$, by definition there exists a function $f:N \rightarrow \mathcal P$ induced by this folding. Since such a function identifies each point along an exterior edge of the net with at least one other distinct point, it follows that $f$ will not be one-to-one.  For these points $x \in \mathcal P$ such that $\abs{f^{-1}(x)}>1$, we can think of each point $a \in N$ with $a \in f^{-1}(x)$ as one of multiple copies of $x$. While it seems that these multiple copies of $x$ could cause confusion, the specific copy of $x$ given by the point $a$ currently being discussed will be implied by other facts, such as to which face in the net the point $a$ belongs.  Furthermore, through a slight abuse of notation, we will not always differentiate between a point $a$ in a net $N$ and its image $f(a)$ on the surface of the polyhedron $\mathcal P$, but through context it will be clear whether we are referring to $a$ or $f(a)$. 

It is also worth noting that in development of the figures and concepts for this paper we will utilize both synthetic geometry (geometry without coordinates, equations, and formulae) and analytic geometry (geometry with coordinates, equations, and formulae).  Since the faces of polyhedra and the points on their surface will be of the utmost importance, over the course of the paper these faces and the points contained within will respectively occur as subsets and elements of polyhedra, and when being considered in the context of net substructures as subsets and elements of both planes in the synthetic setting  and planes in the analytic setting.  However, through context it will be clear in exactly which way we are referring to said faces and points.

As mentioned above, several new net substructures will be introduced in this paper.  To help illustrate these concepts as they are introduced through sections 2 and 3 we will develop them for the specific case of the tetrahedron.  Due to this, sections 2 and 3 will include multiple results and concepts which apply to a wide array of polyhedra, but are primarily dedicated to solving the problem of shortest paths along the surface of the tetrahedron.  On the other hand, sections 4 and 5 include only one general result, but address the more complex problem of determining the collection of shortest paths along the surface of the cube.

\section {A Coordinate System on a Tetrahedron}

In this section we define a coordinate system for the set of points on the surface of a tetrahedron.  Before we define said coordinate system it is worth mentioning that several of the definitions, results, and, in this particular instance, the coordinate system in this paper are valid on convex polyhedra provided every edge of a given polyhedron is of the same length.  Without loss of generality, any such polyhedron can be scaled so that every edge is of length 1, so to simplify the wording in several places we will refer to such polyhedra as convex unit polyhedra.  Note that since tetrahedra are regular polyhedra, without loss of generality we can assume the tetrahedron we are working with is a unit tetrahedron.  The tetrahedron's faces will be labeled with elements of the set $\{F_1, F_2, F_3, F_4\}$ and the vertices with elements of the set $\left\{\{1,2,3\},\{1,2,4\},\{1,3,4\},\{2,3,4\}\right\}$ such that $F_n$ is incident to $\{n_1, n_2, n_3\}$ if and only if $n \in \{n_1, n_2, n_3\}$(see Figure \ref{tetlabel}). While it might seem that this notation for vertices of the tetrahedron is a bit cumbersome, this notation will be quite useful in the case of the cube and to maintain a sense of uniformity we will thus adapt it for the tetrahedron as well.   

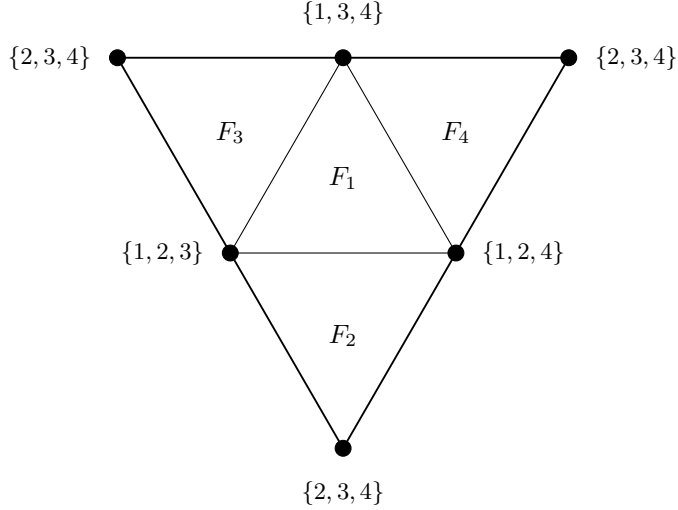
\begin{figure}[h]
  \centering
\begin{tikzpicture}[scale=3]
    \filldraw[fill=black,draw=black] (1.5,1.866) circle (1pt);
    \filldraw[fill=black,draw=black] (2.5,1.866) circle (1pt);
    \filldraw[fill=black,draw=black] (2,2.732) circle (1pt);
    \filldraw[fill=black,draw=black] (2,1) circle (1pt);
    \filldraw[fill=black,draw=black] (1,2.732) circle (1pt);
    \filldraw[fill=black,draw=black] (3,2.732) circle (1pt);
    \draw[line width = 0.25mm] (1,2.732) -- (2,1);
    \draw[line width = 0.25mm] (3,2.732) -- (2,1);
    \draw[line width = 0.25mm] (3,2.732) -- (1,2.732);
    \draw (1.5,1.866) -- (2,2.732) -- (2.5,1.866) -- (1.5,1.866) ;
    \node[font = {\small}] at (2,.8) {$\{2,3,4\}$};
    \node[font = {\small}] at (1.2,1.866) {$\{1,2,3\}$};
    \node[font = {\small}] at (2.8,1.866) {$\{1,2,4\}$};
    \node[font = {\small}] at (.7,2.732) {$\{2,3,4\}$};
    \node[font = {\small}] at (3.3,2.732) {$\{2,3,4\}$};
    \node[font = {\small}] at (2,2.932) {$\{1,3,4\}$};
    \node at (2,2.2) {$F_{1}$};
    \node at (2,1.5) {$F_{2}$};
    \node at (1.5,2.4) {$F_{3}$};
    \node at (2.5,2.4) {$F_{4}$};
\end{tikzpicture}
\caption{Labeling of the Tetrahedron}
\label{tetlabel}
\end{figure}

We will view the labeling of the net in Figure \ref{tetlabel} as fixed, thus yielding a fixed labeling of our tetrahedron which we will refer to as $\mathcal P_4$ for the duration of the paper.  We will follow the convention that any net considered will preserve the direction of rotation from one of a face's vertices to another and be a result of cutting the relevant polyhedron along a subset of its edges and unfolding the polyhedron into a subset of $\mathbb R^2$.  We now develop a coordinate system for use on the surface of any convex unit polyhedron (and in particular unit tetrahedra and unit cubes). 

\begin{definition} 
Given a face $F_n$ of a convex unit polyhedron $\mathcal P$ and a pair of vertices $u$ and $v$ incident to $F_n$ such that $v$ occurs directly after $u$ under counter-clockwise rotation about the interior of $F_n$, let $g$ be the function $g:F_n \rightarrow \mathbb R^2$ such that $g$ maps $F_n$ into the upper half-plane, preserves the distance between points in $F_n$, and maps $u$ and $v$ to $(0,0)$ and $(1,0)$ respectively, and thus $\overline{uv}$ to $\overline{(0,0),(1,0)}$.  Given a point $p$ on $\mathcal P$, if $g(p)=(x_0,y_0)$, $p$ is an element of $F_n$, and $F_m$ is the other face of $\mathcal P$ incident to $u$ and $v$, then $p$ will be said to have the ordered quadruple $(F_n, F_m, x_0, y_0)$ as a {\em representation}. Given a representation $(F_n,F_m,x_0,y_0)$ for a point $p$, we will refer to $(x_0,y_0)$ as the {\em standard position} of $p$ with respect to $(F_n,F_m,x_0,y_0)$. This representation of $p$ has {\em home-face} $F_n$, {\em shared-face} $F_m$, $x$-{\em coordinate} $x_0$, and $y$-{\em coordinate} $y_0$.
\end{definition}

\begin{figure}[h]
  \centering
\begin{tikzpicture}[scale=1.5]
    \filldraw[fill=black,draw=black] (2,4) circle (1pt);
    \filldraw[fill=black,draw=black] (2.15,1.6) circle (1pt);
    \filldraw[fill=black,draw=black] (-0.25,1.4) circle (1pt);
    \filldraw[fill=black,draw=black] (3.5,.8) circle (1pt);
    \filldraw[fill=black,draw=black] (1.85,2.1) circle (1pt);
    \draw[line width = 0.25mm] (2,4) -- (2.15,1.6);
    \draw[line width = 0.25mm] (2.15,1.6) -- (-0.25,1.4);
    \draw[line width = 0.25mm] (-0.25,1.4) -- (3.5,.8);
    \draw[line width = 0.25mm] (-0.25,1.4) -- (2,4);
    \draw[line width = 0.25mm] (3.5,.8) -- (2.15,1.6);
    \draw[line width = 0.25mm] (3.5,.8) -- (2,4);
    \node[font = {\small}] at (2.15,1.4) {$v$};
    \node[font = {\small}] at (-0.25,1.25) {$u$};
    \node[font = {\small}] at (1.85,1.95) {$p$};
    \node at (1.85,1.35) {$F_{m}$};
    \node at (1.3,2.2) {$F_{n}$};
    \draw[line width = 0.025mm] (5,3.9) -- (8.5,3.9);
    \draw[line width = 0.025mm] (5,3.9) -- (5,.9);
    \draw[line width = 0.025mm] (5,.9) -- (8.5,.9);
    \draw[line width = 0.025mm] (8.5,.9) -- (8.5,3.9);
    \draw[line width = 0.025mm] (6.75,3.4) -- (5.8,1.55);
    \draw[line width = 0.025mm] (5.8,1.55) -- (7.7,1.55);
    \draw[line width = 0.025mm] (5.8,1.55) -- (6.15,.9);
    \draw[line width = 0.025mm] (7.7,1.55) -- (6.75,3.4);
    \draw[line width = 0.025mm] (7.7,1.55) -- (7.35,.9);
    \draw[line width = 0.025mm] (5,3.4) -- (8.5,3.4);
    \draw[line width = 0.025mm] (5.8,1.55) -- (5,3.05);
    \draw[line width = 0.025mm] (7.7,1.55) -- (8.5,3.05);
    \filldraw[fill=black,draw=black] (6.75,3.4) circle (1pt);
    \filldraw[fill=black,draw=black] (5.8,1.55) circle (1pt);
    \filldraw[fill=black,draw=black] (7.7,1.55) circle (1pt);
    \filldraw[fill=black,draw=black] (7.35,1.9) circle (1pt);
    \node[font = {\small}] at (6.75,3.6) {$(\frac{1}{2},\frac{\sqrt{3}}{2})$};
    \node[font = {\small}] at (7.925,1.35) {$(1,0)$};
    \node at (6.75,2.3) {$F_{n}$};
    \node[font = {\small}] at (7.15,1.75) {$(x_{0},y_{0})$};
    \node at (6.75,1.1) {$F_{m}$};
    \node[font = {\small}] at (3.9,2.65) {$g$};
    \draw[->][ultra thick](3.5,2.5)--(4.3,2.5);
    \draw[stealth-stealth] (5.8,.9) -- (5.8,3.9)node[below right]{$y$};
    \draw[stealth-stealth] (5,1.55) -- (8.5,1.55)node[above left]{$x$};
\end{tikzpicture}
\caption{The Image of the Point $p=(F_n, F_m, x_0, y_0)$}
\end{figure}
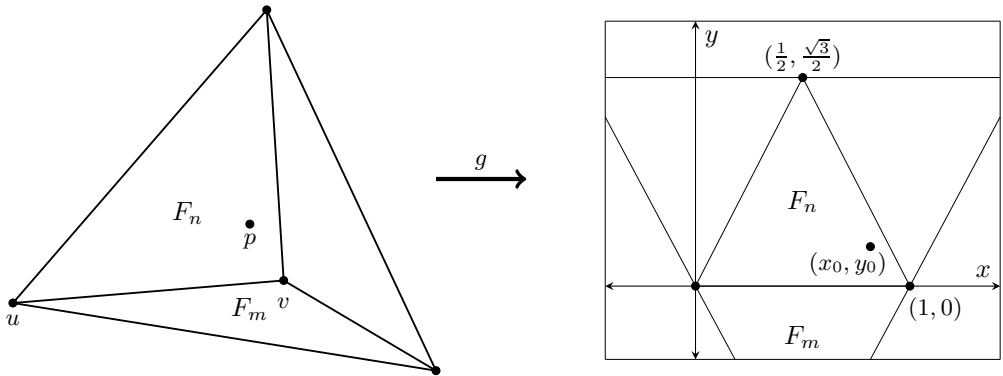

It is worth noting that since any edge of a given face of $\mathcal P_4$ can be identified with the line segment $\overline{(0, 0)(1, 0)}$, if $\{n_1, n_2, n_3, n_4\}=\{1, 2, 3, 4\}$ (an assumption we will keep for the duration of our discussion of $\mathcal P_4$) given a point $p$ with home-face $F_{n_1}$, $p$ has three representations, one with each of the faces $F_{n_2}$, $F_{n_3}$, and $F_{n_4}$ as the shared-face.  At first glance this might seem problematic, but while a given point can be represented by multiple ordered quadruples, a given ordered quadruple defines a unique point.  In this way, a point on a polyhedron can be thought of as an equivalence class consisting of its representations.  However, through a slight abuse of notation given a point $p$ and one of its representations $r$, we will simply write ``$p=r$.''   Furthermore, each pair of home-faces and shared-faces defines a different copy of $\mathbb R^2$ which $p$ can be viewed as an element of, and in this copy of $\mathbb R^2$ while the home-face is inherently a subset of the first quadrant the shared-face has not been given a distinct location, only the edge it shares with the home-face has.  Rather the exact location of the image of the shared-face will be determined by which net is being used, a detail which will be addressed in greater detail in section 3.    

Since under certain circumstances it will be useful to be able to switch from one representation of a point to another we now establish the relationships between said representations for points of $\mathcal P_4$.  First note, if a point is a vertex of $\mathcal P_4$, say $\{n_1, n_2, n_3\}$, then it can be represented as $\left(F_{n_1}, F_{n_4}, \frac{1}{2},\frac {\sqrt {3}}{2}\right)$, $\left(F_{n_2}, F_{n_4},  \frac {1}{2}  ,  \frac {\sqrt {3}}{2}  \right)$, and $\left(F_{n_3}, F_{n_4},  \frac {1}{2}  ,  \frac {\sqrt {3}}{2}  \right)$.  Next note, if a point lies on an edge of $\mathcal P_4$ say, $\overline {\{n_1, n_2, n_4\} \{n_1, n_2, n_3\}}$, and is represented as $(F_{n_1}, F_{n_2}, x, 0)$ then it can also be represented as $(F_{n_2}, F_{n_1}, 1-x, 0)$.  In this way, for any point incident to more than one face we can switch between representations using said faces as the point's home-face.  However, it takes a little more care to switch between representations with the same home-face and different shared-faces.  Due to this we introduce the following lemma.

\begin{lemma}\label{tet-rep}
Suppose $\{n_1, n_2, n_3, n_4\}=\{1, 2, 3, 4\}$ such that in nets of our copy of $\mathcal P_4$ the faces $F_{n_2}$, $F_{n_3}$, and $F_{n_4}$ occur in the same counter-clockwise order around the face $F_{n_1}$ as the faces $F_2$, $F_3$, and $F_4$ occur around the face $F_1$.  Then if a point $p\in\mathcal P_4$ can be represented as $(F_{n_1}, F_{n_2}, x, y)$, then it can also be represented as 
\[\left(F_{n_1}, F_{n_4}, \frac {1-x+\sqrt{3}y}{2}, \frac{\sqrt{3}-\sqrt{3}x-y}{2}\right).\] 
\end{lemma}

\begin{figure}[h]
  \centering
\begin{tikzpicture}[scale=1]
    \filldraw[fill=black,draw=black] (3,4) circle (2pt);
    \filldraw[fill=black,draw=black] (.75,0) circle (2pt);
    \filldraw[fill=black,draw=black] (5.25,0) circle (2pt);
    \filldraw[fill=black,draw=black] (3.25,1.78) circle (2pt);
    \draw[line width = 0.025mm] (0,4) -- (6,4);
    \draw[line width = 0.025mm] (3,4) -- (5.25,0) -- (.75,0) -- (3,4);
    \draw[line width = 0.025mm] (.75,0) -- (0,0) -- (4,2.2);
    \draw[line width = 0.025mm] (3.25,1.78) -- (3.25,-.35);
    \draw[line width = 0.025mm] (-.015,0) -- (-.015,-.35) -- (5.25,-.35) -- (5.25,0);
    \draw[line width = 0.025mm] (3.1,0) -- (3.1,.15) -- (3.25,.15);
    \node[font = {\small}] at (4.1,3.75) {$\{n_{1},n_{3},n_{4}\}$};
    \node[font = {\normalsize}] at (.75,2.75) {$F_{n_3}$};
    \node[font = {\normalsize}] at (5.25,2.75) {$F_{n_4}$};
    \node[font = {\small}] at (4.05,2.5) {$h$};
    \node[font = {\small}] at (3.45,1.6) {$p$};
    \node[font = {\small}] at (-.2,0) {$k$};
    \node[font = {\small}] at (4.3,-.55) {$1-x$};
    \node[font = {\small}] at (1.6,-.55) {$\sqrt{3}y$};
    \node[font = {\normalsize}] at (2.85,-.6) {$F_{n_2}$};
    \node[font = {\small}] at (3.375,.15) {$g$};
    \node[font = {\small}] at (4.275,.2) {$\{n_{1},n_{2},n_{4}\}$};
    \node[font = {\small}] at (1.8,.2) {$\{n_{1},n_{2},n_{3}\}$};
    \node[font = {\normalsize}] at (2.85,1.2) {$F_{n_1}$};
\usetikzlibrary{calc}
\newcommand\rightAngle[4]{
  \pgfmathanglebetweenpoints{\pgfpointanchor{#2}{center}}{\pgfpointanchor{#1}{center}}
  \coordinate (tmpRA) at ($(#2)+(\pgfmathresult+45:#4)$);
  \draw[black,thin] ($(#2)!(tmpRA)!(#1)$) -- (tmpRA) -- ($(#2)!(tmpRA)!(#3)$);
}
  \coordinate (O) at (4,2.2);
  \coordinate (X) at (0,0);
  \coordinate (Y) at (5.25,0);
  \rightAngle{X}{O}{Y}{0.25}
  \draw[line width = 0.05mm] (4,2.2) -- (4.4,2.42) -- (5.640453489,.2) -- (5.25,0);
\node[font = {\small}] at (5.2,1.3) {$u$};
  \draw[line width = 0.05mm] (3.8,2.5777) -- (-.2068669528,.368223176) -- (0,0);
\node[font = {\small}] at (3.4,2.5) {$v$};
\node[font = {\small}] at (2.1,1.9) {$2y$};
  \draw[line width = 0.05mm] (3.25,1.78) -- (3.039914163,2.15395279);
  \draw[line width = 0.05mm] (1.15,-.7114) -- (-.1,1.511);
  \draw[line width = 0.05mm] (4.85,-.7114) -- (6.1,1.51111);
\end{tikzpicture}
\caption{Representations of a Point $p$ with Different Shared-Faces}
\end{figure}

\begin{proof}
As mentioned previously, $p$ has a representation for which $F_{n_4}$ is used as the shared face.  Suppose $(u,v) \in \mathbb R^2$ is such that $p=(F_{n_1}, F_{n_4}, u, v)$.  Let $g \in \overline{\{n_1, n_2, n_3\}\{n_1, n_2, n_4\}}$ and $h \in \overline{\{n_1, n_2, n_4\}\{n_1, n_3, n_4\}}$ such that $\overline {pg} \perp \overline {\{n_1, n_2, n_3\}\{n_1, n_2, n_4\}}$ and $\overline{ph} \perp \overline {\{n_1, n_2, n_4\}\{n_1, n_3, n_4\}}$.  Now, let $k=\overleftrightarrow{ph} \cap \overleftrightarrow{g\{n_1, n_2, n_3\}}$.  Since $F_{n_1}$ is equilateral, the triangle $\triangle kpg$ is a 30-60-90 triangle, and so the lengths of line segments $\overline{gk}$ and $\overline{pk}$ are $gk=\sqrt{3}y$ and $pk=2y$.  Since $g\{n_1, n_2, n_4\}=1-x$, we have $k\{n_1, n_2, n_4\}=1-x+\sqrt{3}y$.  Similarly, $\triangle k\{n_1, n_2, n_4\}h$ is a 30-60-90 triangle, and so $u=h\{n_1, n_2, n_4\}=\frac{1-x+\sqrt{3}y}{2}$ and 
$v=ph=\frac{\sqrt{3}-\sqrt{3}x-y}{2}$. 
\end{proof}

Note: due to the symmetries of $\mathcal P_4$, the transformation used in the previous lemma to switch between viewing a point $p$ as having shared-face $F_{n_2}$ and having shared-face $F_{n_4}$ can also be used to switch between viewing a point $p$ as having shared-face $F_{n_4}$ and having shared-face $F_{n_3}$ or to switch between viewing a point $p$ as having shared-face $F_{n_3}$ and having shared-face $F_{n_2}$.  So for any point $p$ on face $F_{n_1}$ of $\mathcal P_4$, one can switch between the representations of $p$ viewing $p$ as having any of the remaining faces as its shared-face (by applying the transformation at most twice).   We have thus established how to switch between a given representation of a point $p$ and any other representation of $p$.

\section {Landscapes of a Tetrahedron}

Since the shortest path on the surface of a convex polyhedron is a line segment contained in one of the polyhedron's nets, to calculate the length of said paths or determine the set of points along the paths it is sufficient to do so for each subset of the nets in which these paths could be contained.  We now provide for reference the definition of a dual graph, and introduce the new notion of a landscape which will be a useful tool in determining exactly which subsets of a polyhedron's nets need to be considered for such calculations.  Since the shortest path between two points on the same face of a polyhedron is simply the line segment in $\mathbb R^3$ connecting the two points, landscapes need only be defined for a net's subsets of at least two faces.

\begin{definition}
Given a plane graph $G$, the \emph{dual graph} of $G$ is the graph $H$ for which the set of vertices of $H$ is equal to the set of faces of $G$, provided for all pairs of vertices $ a, b$ in $H$, $a$ and $b$ are connected by an edge in $H$ if and only if $a$ and $b$ are adjacent faces in $G$.
\end{definition}

\begin{definition}
Let $\mathcal P$ be a convex polyhedron and $N$ some net of $\mathcal P$.  Considering the vertices and edges of $N$ as a plane graph with faces given by the faces of $\mathcal P$, let $G$ be the dual graph of $N$.  If $H$ is a path subgraph of $G$ of at least two vertices, then the union of the collection of faces of $N$ given by the vertices of $H$ form a \emph{landscape} $L$.  In particular, if $F_{n_1}$ and $F_{n_2}$ are the vertices of $G$ with degree 1, then we say $L$ is a landscape from \emph{origin face} $F_{n_1}$ to \emph{destination face} $F_{n_2}$ and denote this by $L(F_{n_1} \rightarrow F_{n_2})$.
\end{definition}

\begin{definition}
Let $L$ be a landscape of a convex unit polyhedron $\mathcal P$ and $G$ be the dual graph of $L$.  If $H$ is a path subgraph of $G$ of at least two vertices, then the union of the collection of faces of $\mathcal P$ given by the vertices of $H$ form a \emph{sublandscape} $L'$ of $L$. In particular, if $H$ is a proper subgraph of $G$, then $L'$ is a \emph{proper sublandscape} of $L$.
\end{definition}

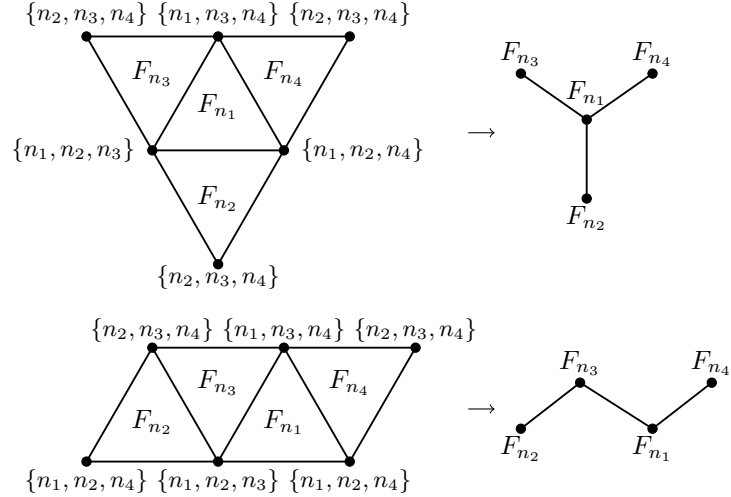
\begin{figure}[h]
  \centering
\begin{tikzpicture}[scale=1.75]
    \filldraw[fill=black,draw=black] (1,2.732) circle (1pt);
    \filldraw[fill=black,draw=black] (2,2.732) circle (1pt);
    \filldraw[fill=black,draw=black] (3,2.732) circle (1pt);
    \filldraw[fill=black,draw=black] (1.5,1.866) circle (1pt);
    \filldraw[fill=black,draw=black] (2.5,1.866) circle (1pt);
    \filldraw[fill=black,draw=black] (2,1) circle (1pt);
    \filldraw[fill=black,draw=black] (1,-.5) circle (1pt);
    \filldraw[fill=black,draw=black] (2,-.5) circle (1pt);
    \filldraw[fill=black,draw=black] (3,-.5) circle (1pt);
    \filldraw[fill=black,draw=black] (1.5,.366) circle (1pt);
    \filldraw[fill=black,draw=black] (2.5,.366) circle (1pt);
    \filldraw[fill=black,draw=black] (3.5,.366) circle (1pt);
    \draw[line width = 0.25mm] (1,-.5) -- (2,-.5) -- (3,-.5) -- (3.5,.366) -- (2.5,.366) -- (1.5,.366) -- (2,-.5) -- (2.5,.366) -- (3,-.5);
    \draw[line width = 0.25mm] (1,-.5) -- (1.5,.366);
    \draw[line width = 0.25mm] (1,2.732) -- (2,2.732) -- (3,2.732) -- (2.5,1.866) -- (2.5,1.866) -- (2,1) -- (1.5,1.866) -- (2,2.732) -- (2.5,1.866) -- (1.5,1.866) -- (1,2.732);
    \node at (1.5,2.45) {$F_{n_3}$};
    \node at (2.5,2.45) {$F_{n_4}$};
    \node at (2,2.2) {$F_{n_1}$};
    \node at (2,1.5) {$F_{n_2}$};
    \node at (2,.1) {$F_{n_3}$};
    \node at (3,.1) {$F_{n_4}$};
    \node at (1.5,-.2) {$F_{n_2}$};
    \node at (2.5,-.2) {$F_{n_1}$};
    \node[font = {\small}] at (1,2.9) {$\{n_{2},n_{3},n_{4}\}$};
    \node[font = {\small}] at (2,2.9) {$\{n_{1},n_{3},n_{4}\}$};
    \node[font = {\small}] at (3,2.9) {$\{n_{2},n_{3},n_{4}\}$};
    \node[font = {\small}] at (.9,1.866) {$\{n_{1},n_{2},n_{3}\}$};
    \node[font = {\small}] at (3.1,1.866) {$\{n_{1},n_{2},n_{4}\}$};
    \node[font = {\small}] at (2,.9) {$\{n_{2},n_{3},n_{4}\}$};
    \node[font = {\small}] at (1,-.65) {$\{n_{1},n_{2},n_{4}\}$};
    \node[font = {\small}] at (2,-.65) {$\{n_{1},n_{2},n_{3}\}$};
    \node[font = {\small}] at (3,-.65) {$\{n_{1},n_{2},n_{4}\}$};
    \node[font = {\small}] at (1.5,.5) {$\{n_{2},n_{3},n_{4}\}$};
    \node[font = {\small}] at (2.5,.5) {$\{n_{1},n_{3},n_{4}\}$};
    \node[font = {\small}] at (3.5,.5) {$\{n_{2},n_{3},n_{4}\}$};
    \draw[->](3.9,2)--(4.1,2);
    \draw[->](3.9,-.1)--(4.1,-.1);
    \filldraw[fill=black,draw=black] (4.3,2.45) circle (1pt);
    \filldraw[fill=black,draw=black] (5.3,2.45) circle (1pt);
    \filldraw[fill=black,draw=black] (4.8,2.1) circle (1pt);
    \filldraw[fill=black,draw=black] (4.8,1.5) circle (1pt);
    \draw[line width = 0.25mm] (4.3,2.45) -- (4.8,2.1) -- (5.3,2.45);
    \draw[line width = 0.25mm] (4.8,2.1) -- (4.8,1.5);
    \node at (4.3,2.6) {$F_{n_3}$};
    \node at (5.3,2.6) {$F_{n_4}$};
    \node at (4.8,2.3) {$F_{n_1}$};
    \node at (4.8,1.35) {$F_{n_2}$};
    \filldraw[fill=black,draw=black] (4.3,-.25) circle (1pt);
    \filldraw[fill=black,draw=black] (5.3,-.25) circle (1pt);
    \filldraw[fill=black,draw=black] (4.75,.1) circle (1pt);
    \filldraw[fill=black,draw=black] (5.75,.1) circle (1pt);
    \draw[line width = 0.25mm] (4.3,-.25) -- (4.75,.1) -- (5.3,-.25) -- (5.75,.1);
    \node at (4.3,-.4) {$F_{n_2}$};
    \node at (5.3,-.4) {$F_{n_1}$};
    \node at (4.75,.25) {$F_{n_3}$};
    \node at (5.75,.25) {$F_{n_4}$};
\end{tikzpicture}
\caption{Dual Graphs of Two Nets of a Tetrahedron}
\end{figure}

As mentioned earlier, the ordered quadruple representing a point assigns a location in $\mathbb R^2$ to points in the home-face, but does not assign a location in $\mathbb R^2$ to any other points of any other faces of a given convex unit polyhedron.  However, a choice of landscape of the polyhedron and a home-face along with the assignment of the location of just one of the home-face's edges would serve this purpose for every point within said landscape.  Furthermore, when performing the calculations inherently necessary for identifying and measuring shortest paths on the surface of a tetrahedron it will be convenient to consider a point as an element of the copy of $\mathbb R^2$ determined by one such choice and moments later as an element of the copy of $\mathbb R^2$ determined by a different choice of said type.  We address this need more rigorously with the following definition.

\begin{definition}
Let $L$ be a landscape of a convex unit polyhedron $\mathcal P$, $F_n$ be contained in $L$, $F_m$ be a face of $\mathcal P$ adjacent to $F_n$, and $u$ and $v$ be the vertices incident to $F_n$ and $F_m$ with $v$ occurring directly after $u$ under counter-clockwise rotation about the interior of $F_n$.  Let $O=(L, F_n, F_m)$ reference the subset of $\mathbb R^2$ contained in the image of $L$ when $F_n$ is mapped into the upper half-plane by a function $g:L \rightarrow \mathbb R^2$ which preserves the distance between points in $L$ and $u$ and $v$ are mapped to $(0,0)$ and $(1,0)$ respectively.  Refer to $O$ as an \emph{orientation} of $L$.  When we wish to view a point $p$ as an element of the orientation $(L, F_n, F_m)$ we will denote it by $p(L, F_n, F_m)$ and when we wish to denote its $x$-coordinate or $y$-coordinate we will denote them by $p(L, F_n, F_m)_x$ and $p(L, F_n, F_m)_y$ respectively. 
\end{definition}

For matters of discussion, it may be handy to view an orientation $O=(L, F_n, F_m)$ in certain contexts as an orientation of the polyhedron $\mathcal P$.  Since $O$ is an orientation of $L$, which in turn is a landscape of $\mathcal P$, this convention is admissible when convenient. 

Since two distinct landscapes of a convex polyhedron $\mathcal P$ are constructed from a different finite sequence of faces of $\mathcal P$, if the points $x$ and $y$ are each elements of two distinct landscapes $L$ and $K$ it is likely that the line segment $\overline{xy}$ in $L$ is not comprised of the same points nor of the same length as the line segment $\overline {xy}$ in $K$.  To help distinguish between the line segments joining two given points in each of multiple distinct landscapes, or orientations, we introduce the notion of the trail.  

\begin{definition}
Given $O$ an orientation of a polyhedron $\mathcal P$, if $p_1, p_2 \in O$ and the line segment $\overline{p_1p_2} \subset O$, then we call said line segment the \emph{trail} from $p_1$ to $p_2$ in $O$ and denote it $T_O(p_1,p_2)$.  The portion of $T_O(p_1,p_2)$ contained in face $F_n$ will be denoted by ${}_nT_O(p_1,p_2)$.
\end{definition}

Since trails are defined as subsets of copies of $\mathbb R^2$, as sets of points in $\mathbb R^2$ they are dependent upon the orientation being used, but since length is translation and rotation invariant the lengths of said paths are only dependent upon the landscape in question.  With this in mind we introduce the following notation.

\begin{definition}
Given $O=(L,F_n,F_m)$ an orientation of a polyhedron $\mathcal P$, if $\overline{p_1p_2} \subset O$, then we denote the length of $T_O(p_1,p_2)$ by $\abs{T_L(p_1,p_2)}$.  On the other hand, if $p_1,p_2 \in O$, but $\overline{p_1p_2} \not \subset O$ then we will set $\abs{T_L(p_1,p_2)}=\infty$.
\end{definition}

It might seem problematic that paths on the surface of a convex unit polyhedron are considered as subsets of $\mathbb R^2$.  However, it is important to remember that, due to the similarities between the way the coordinate system on convex unit polyhedra and the way their orientations have been defined, after application of the folding function the exact coordinates of the images of the points on these paths will be easy to determine.  For the curious reader an example of this will be shown at the end of this section.  

The following theorem establishes that evaluating the lengths of the trails between two points $p_1$ and $p_2$ on a convex unit polyhedron $\mathcal P$ is sufficient for determining the length of the shortest path restricted to the surface of $\mathcal P$ between $p_1$ and $p_2$.  Furthermore, since said path will actually be one of $\mathcal P$'s trails, the previous definition will allow us to determine the points along said path.

\begin{theorem}\label{fullgenerality}
Let $\mathcal P$ be a convex unit polyhedron, $F_{n}$ be a face of $\mathcal{P}$, $F_{m}$ and $F_{r}$ be faces of $\mathcal{P}$ distinct from $F_{n}$, and $p_1 \in F_n\setminus F_m$ and $p_2 \in F_m \setminus F_n$ be points on the surface of $\mathcal P$.  Then there exists an orientation $O=(L, F_n, F_r)$ of $\mathcal P$ such that the shortest path between $p_1$ and $p_2$ is $T_O(p_1, p_2)$.  
\end{theorem}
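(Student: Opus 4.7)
The plan is to reduce the statement to the classical fact cited in the introduction: the shortest path between two points on the surface of a convex polyhedron appears as a straight line segment in some net of the polyhedron. The main work is then to repackage the relevant portion of that net as an orientation of the form required by the theorem.

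First I would apply the cited fact to obtain a net $N$ of $\mathcal P$ containing (copies of) $p_1$ and $p_2$ such that the shortest path between them is realized by a single straight segment $\sigma \subset N$. Since $p_1 \in F_n$ and $p_2 \in F_m$, the segment $\sigma$ crosses a finite sequence of faces $F_n = F_{i_1}, F_{i_2}, \ldots, F_{i_k} = F_m$, with each consecutive pair sharing an edge crossed by $\sigma$ (making an arbitrary choice if $\sigma$ passes through a vertex or runs briefly along an edge). This sequence is a path in the dual graph of $N$, so the union of the corresponding faces is a landscape $L = L(F_n \to F_m)$ in the sense of the earlier definition, and $\sigma$ lies entirely in the planar embedding of $L$ inherited from $N$.

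Next I would compare that embedding of $L$ with the embedding produced by the orientation $O = (L, F_n, F_r)$. Both place $F_n$ in the plane as a rigid copy and then successively attach $F_{i_{j+1}}$ to $F_{i_j}$ along their shared edge in a distance-preserving, counter-clockwise-respecting fashion. A short induction on $j$ shows that the two planar layouts of $L$ differ only by the rigid motion of $\mathbb R^2$ that carries the edge used to position $F_n$ inside $N$ to the edge $F_n \cap F_r$ used to position $F_n$ in $O$. Because rigid motions send straight segments to straight segments and preserve length, $\sigma$ is carried to a straight segment $\sigma' \subset O$ joining the $O$-images of $p_1$ and $p_2$. By definition $\sigma'$ is the trail $T_O(p_1, p_2)$, and its length equals the length of the original shortest path on $\mathcal P$.

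I expect the main obstacle to be exactly this comparison step: showing rigorously that the step-by-step ``attach along the shared edge'' construction built into the definition of an orientation produces the same planar layout of $L$, up to a rigid motion, as the layout inherited from an arbitrary net containing $L$. This is a uniqueness-of-unfolding statement, and care must be taken with the counter-clockwise ordering convention and with the fact that $(L, F_n, F_r)$ positions $F_n$ by the edge it shares with $F_r$, which need not lie in $L$ or be the edge crossed first by $\sigma$. Once this identification is in place, no further calculation is needed, and in particular the same landscape $L$ serves every admissible choice of $F_r$.
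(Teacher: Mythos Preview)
Your approach is essentially the same as the paper's: invoke the classical net fact, observe that the faces crossed by the segment form a landscape $L$, and then place the segment in the orientation $(L,F_n,F_r)$. The paper is slightly more explicit than you about why the face sequence has no repeats (convexity of each face forces $\sigma\cap F$ to be a single connected piece, so the walk in the dual graph is a path), whereas you are more explicit than the paper about the rigid-motion identification between the net's layout of $L$ and the orientation's layout---the paper simply asserts that $S$ is contained in $(L,F_n,F_r)$ without further comment.
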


\begin{proof}
Since $\mathcal P$ is a convex polyhedron, it is known that the shortest path restricted to the surface of $\mathcal P$ between $p_1$ and $p_2$ will be a straight line segment in one of $\mathcal P$'s nets.  Let $N$ be said net, $G$ be $N$'s dual graph, and $S$ be said path.  Since line segments are connected, the sequence of vertices in $G$ corresponding to the faces traversed by $S$ will form a walk in $G$.   Also since $\mathcal P$ is convex each of $\mathcal P$'s faces will be convex.  Due to this convexity, the fact that $S$ is a line segment gives us that the portion of $S$ within any given face will be either empty or a single connected subset of $S$.  So, in fact, the sequence of vertices in $G$ corresponding to the faces traversed by $S$ will be a path in $G$.  Due to this, the sequence of faces traversed by $S$ will be a landscape of $\mathcal P$.  Call this landscape $L$, and note that $S$ will be contained in any of the orientations associated with $L$, in particular $S$ will be contained in $(L, F_n, F_r)$.  
\end{proof}

From a computational standpoint, the fact that the shortest path can be considered as a line segment in the plane tells us that the set of points making up this shortest path $T_O(p_1,p_2)$ is a subset of the line $y-y_1=m(x-x_1)$, where $p_1=(F_n,F_r,x_1,y_1)$ and $\displaystyle m=\frac{y_1-p_2(L, F_n, F_r)_y}{x_1-p_2(L, F_n, F_r)_x}$ or in the special case that $x_1=p_2(L, F_n, F_r)_x$, the line $x=x_1$.  To address that the line segment forming this trail is finite, one must simply require that for $(x,y)$ to be on the line segment 
\[\min\left\{x_1,p_2(L, F_n, F_r)_x\right\} \leq x \leq \max\left\{x_1,p_2(L, F_n, F_r)_x\right\} \text{ and}\]
\[\min\left\{y_1,p_2(L, F_n, F_r)_y\right\} \leq y \leq \max\left\{y_1,p_2(L, F_n, F_r)_y\right\}\]

\noindent Finally, by a simple application of the Pythagorean Theorem, we have that the length of this trail is given by 
\[\abs{T_L(p_1, p_2)}=\sqrt{\left(x_1-p_2(L, F_n, F_r)_x\right)^2+\left(y_1-p_2(L, F_n, F_r)_y\right)^2}.\]

Since a given convex unit polyhedron $\mathcal P$ has only finitely many faces, it will have only finitely many orientations, and thus only finitely many trails between any two fixed points $p_1$ and $p_2$ on its surface. Furthermore, since the length of the shortest path on the surface of a convex unit polyhedron $\mathcal P$ between two of its points is given by the length of one of $\mathcal P$'s trails, we can establish that the length of said shortest path, which we will henceforth refer to as the \emph{surface distance}, can be defined as
\[d_{\mathcal P}(p_1,p_2)=\min\left\{\abs{T_L(p_1,p_2)}: L ~\text{is a landscape of}~\mathcal P\right\}.\]

Having shown that considering the landscapes of a convex unit polyhedron $\mathcal P$ is sufficient for determining surface distances, we will now define the subset of landscapes which are sufficient for determining surface distances.  The definition is useful not simply due to the attractiveness of having a set of landscapes which is both necessary and sufficient for our purposes, but also for speed and ease of computation.  Since a given polyhedron may have many landscapes it will be useful to be able to restrict one's attention to the landscapes in which shortest (and simplest) paths can occur.  To address this need we introduce the notion of a valid landscape.   

\begin{definition}
Given a landscape $L_i$ of a polyhedron $\mathcal P$, $L_i$ is said to be a {\em valid} landscape of $\mathcal P$ if there exist points $p_1, p_2 \in L_i$ such that $\abs{T_{L_i}(p_1,p_2)}=d_{\mathcal P}(p_1,p_2)$ and for all other landscapes $L_j$ of $\mathcal P$ with $p_1, p_2 \in L_j$, if $\abs{T_{L_i}(p_1,p_2)} = \abs{T_{L_j}(p_1,p_2)}$ then $L_i$ does not contain $L_j$ as a sublandscape. 
\end{definition}

Having defined the concept of a valid landscape we now turn our focus to determining just how many valid landscapes a tetrahedron has.  Note that   due to the symmetries of $\mathcal P_4$, the fact that the faces of $\mathcal P_4$ are pairwise adjacent, and the presence of a formula for switching from one shared-face to another given points $p_1 \in F_{n_1}$ and $p_2 \in F_{n_2}$, we can assume without loss of generality that the representations of the points $p_1$ and $p_2$ are such that $p_1=(F_{n_1}, F_{n_2}, x_1, y_1)$ and $p_2=(F_{n_2}, F_{n_1}, x_2, y_2)$.

\begin{theorem}\label{tet-valid}
Let $F_{n_1}$ and $F_{n_2}$ be distinct faces of $\mathcal P_4$.  
\begin{enumerate}
    \item Then there are 5 landscapes $L_i(F_{n_1} \rightarrow F_{n_2})$ of $\mathcal P_4$, and in particular all 5 landscapes are valid.
    \item Given two points $p_1 \in F_{n_1} \setminus F_{n_2}$ and $p_2 \in F_{n_2} \setminus F_{n_1}$, the surface distance between $p_1=(F_{n_1},F_{n_2},x_1,y_1)$ and $p_2=(F_{n_2},F_{n_1},x_2,y_2)$ is given by 
\[d_{\mathcal P_4}(p_1,p_2)=\min\left\{\abs{T_{L_i}(p_1,p_2)}: i \in \mathbb N \text{ with } 1 \leq i \leq 5\right\},\]
where for each $i \in \mathbb N$ with $1 \leq i \leq 5$, the trail length $\abs{T_{L_i}(p_1,p_2)}$ is given in the table below:

\begingroup
\begin{center}
\begin{tabular}{|c|c|}
    \hline
    Landscape & Trail Length Formula\\
    \hline
    \rule{0pt}{12pt}$L_1$ &\rule{-4pt}{12pt} $\sqrt{(x_1+x_2-1)^2+(y_1+y_2)^2}$\\[5pt]
    \hline
    \rule{0pt}{12pt}$L_2$ &\rule{-4pt}{12pt} $\sqrt{(x_1-x_2+1)^2+(y_1-y_2)^2}$\\[5pt]
    \hline
    \rule{0pt}{12pt}$L_3$ &\rule{-4pt}{12pt} $\sqrt{(x_1-x_2-1)^2+(y_1-y_2)^2}$\\[5pt]
    \hline
    \rule{0pt}{16pt}$L_4$ &\rule{-4pt}{16pt} $\sqrt{(x_1+x_2)^2+(y_1+y_2-\sqrt{3})^2}$\\[5pt]
    \hline
    \rule{0pt}{16pt}$L_5$ &\rule{-4pt}{16pt} $\sqrt{(x_1+x_2-2)^2+(y_1+y_2-\sqrt{3})^2}$\\[5pt]
    \hline
\end{tabular}
\end{center}
\endgroup
\end{enumerate}

\end{theorem}

\begin{proof}
Suppose $\{n_1, n_2, n_3, n_4\}=\{1, 2, 3, 4\}$ such that in nets of our copy of $\mathcal P_4$ the faces $F_{n_2}$, $F_{n_3}$, and $F_{n_4}$ occur in the same counter-clockwise order around the face $F_{n_1}$ as the faces $F_2$, $F_3$, and $F_4$ occur around the face $F_1$.  To discuss the trails which occur in the definition of a valid landscape it will be necessary to consider a pair of points $p_1$ and $p_2$ on the surface of $\mathcal P_4$. 
 To construct the landscapes we will break the problem into cases determined by the number of vertices in the dual graphs of each landscape.  Since $\mathcal P_4$ has only four faces, no dual graph of $\mathcal P_4$ contains a path of more than four vertices, and since the dual graph of a landscape must contain at least two vertices; landscapes of $\mathcal P_4$ must contain either two, three, or four faces.  Once we have constructed each landscape $L_i$ we will then determine $p_2(L_i, F_{n_1},F_{n_2})$ so that we can use it to develop formulae for $\abs{T_{L_i}(p_1,p_2)}$.  Having developed a formula for each trail length, our final step will be to produce points witnessing the validity of each landscape. 

\begin{figure}[h!]
  \centering
\begin{tikzpicture}[scale=1]
    \node[diamond,
    draw,
    minimum width =3.6cm,
    minimum height =5.9cm] (d) at (3,4) {};
    \filldraw[fill=black,draw=black] (d.north) circle (2pt);
    \filldraw[fill=black,draw=black] (d.west) circle (2pt);
    \filldraw[fill=black,draw=black] (d.south) circle (2pt);
    \filldraw[fill=black,draw=black] (d.east) circle (2pt);
    \draw[line width = 0.05mm] (d.north) -- (d.west) -- (d.south) -- (d.east) -- cycle;
    \draw[line width=0.05mm] (d.east) -- (d.west);
    \node[above][font = {\small}] at (d.north) {$\{n_{1},n_{3},n_{4}\}$};
    \node[below][font = {\small}] at (d.south) {$\{n_{2},n_{3},n_{4}\}$};
    \node[left][font = {\small}] at (d.west) {$\{n_{1},n_{2},n_{3}\}$};
    \node[right][font = {\small}] at (d.east) {$\{n_{1},n_{2},n_{4}\}$};
    \node[above, inner sep=25] at (d.south) {$F_{n_2}$};
    \node[below, inner sep=25] at (d.north) {$F_{n_1}$};
    \draw [line width=0.01mm] (d.west) rectangle (2.5,4.3);
    \draw[ultra thin,dash pattern={on 1pt}] (2.5,4) -- (2.5,3.2);
    \filldraw[fill=black,draw=black] (2.5,3.2) circle (2pt);
    \node[left][font = {\tiny}] at (2.5,3.2) {$p_{2}$};
    \draw [ultra thin] (2.5,3.2) rectangle (3.5,2.95);
    \node[below, inner sep=.5][font = {\tiny}] at (3,2.95) {$|x_{1}+x_{2}-1|$};
    \draw [ultra thin] (3.5,3.2) rectangle (3.8,5.2);
    \filldraw[fill=black,draw=black] (3.5,5.2) circle (2pt);
    \node [right, inner sep=.5] [font = {\tiny}] at (3.8,4.2) {$y_{1}+y_{2}$};
    \draw [ultra thin] (3.5,5.2) -- (2.5,3.2);
    \draw [ultra thin] (3.3,3.2) rectangle (3.5,3.4);
    \node[above] [font = {\tiny}] at (3.5,5.2) {$p_{1}$};
    \node[font = {\tiny}] at (2.05,4.45) {$1-x_{2}$};
\end{tikzpicture}
    \caption{$L_1(F_{n_1} \rightarrow F_{n_2})$ and $T_O(p_1, p_2)$, where $O=(L_1,F_{n_1},F_{n_2})$}
    \label{trail 1}
\end{figure}
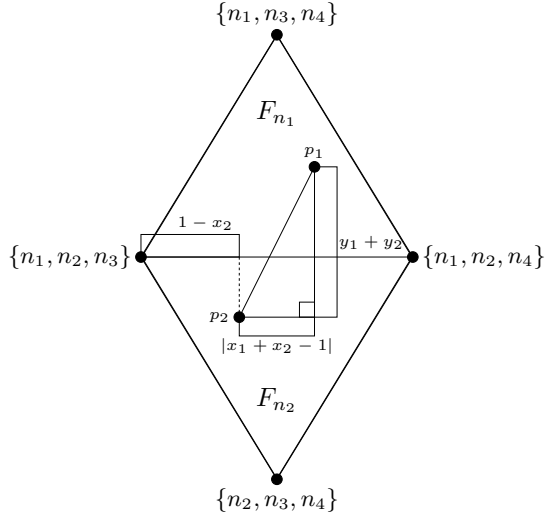

\begin{case} 
The dual graph of the landscape is a path of two vertices.
\end{case}

Since we are constructing landscapes from $F_{n_1}$ to $F_{n_2}$, and $F_{n_1}$ and $F_{n_2}$ have only one edge in common, the only such landscape is that given in Figure \ref{trail 1}, which we will refer to as $L_1$. 

Since $p_2$'s home-face is $F_{n_2}$ and $p_2$'s shared-face is $F_{n_1}$, in the orientation $(L_1, F_{n_1}, F_{n_2})$ the point $p_2$ has been rotated $180$ degrees about the origin and shifted $1$ unit to the right from the standard position with respect to the representation $(F_{n_2},F_{n_1},x_2,y_2)$. Due to this $p_2(L_1, F_{n_1}, F_{n_2})=(1-x_2, -y_2)$.  Thus we get that
\[\abs{T_{L_1}(p_1, p_2)}=\sqrt{(x_1+x_2-1)^2+(y_1+y_2)^2}.\]

\begin{case}
The dual graph of the landscape is a path of three vertices.
\end{case}

Since we are constructing landscapes from $F_{n_1}$ to $F_{n_2}$ and $\mathcal P_4$ only has four faces; the first face must be $F_{n_1}$, the third face must be $F_{n_2}$, and the second face must be either $F_{n_3}$ or $F_{n_4}$.  Since two distinct faces of $\mathcal P_4$ have exactly one edge in common the two landscapes given in Figure \ref{tettrail2} and Figure \ref{tettrail3} are the only such landscapes.

\begin{figure}[h!]
  \centering
\begin{tikzpicture}[scale=1]
    \node[isosceles triangle,
    isosceles triangle apex angle=60,
    rotate=270,
    draw,
    minimum size=3cm] (T)at (0,0) {};
    \node[above][font = {\small}] at (T.right corner) {$\{n_{2},n_{3},n_{4}\}$};
    \node[above][font = {\small}] at (T.left corner) {$\{n_{1},n_{3},n_{4}\}$};
    \node[font = {\small}] at (1,-1.8) {$\{n_{1},n_{2},n_{3}\}$};
    \filldraw[fill=black,draw=black] (T.right corner) circle (2pt);
    \filldraw[fill=black,draw=black] (T.left corner) circle (2pt);
    \filldraw[fill=black,draw=black] (T.apex) circle (2pt);
    \node[above, inner sep=60] at (T.apex) {$F_{n_3}$};
    \node[below, inner sep=20] at (T.left corner) {$F_{n_1}$};
    \node[below, inner sep=20] at (T.right corner) {$F_{n_2}$};
    \filldraw[fill=black,draw=black] (3.5,-2) circle (2pt);
    \filldraw[fill=black,draw=black] (-3.5,-2) circle (2pt);
    \draw[line width=0.05mm] (1.75,1) -- (3.5,-2) -- (0,-2);
    \draw[line width=0.05mm] (-1.75,1) -- (-3.5,-2) -- (0,-2);
    \node[left][font = {\small}] at (-3.5,-2) {$\{n_{1},n_{2},n_{4}\}$};
    \node[right][font = {\small}] at (3.5,-2) {$\{n_{1},n_{2},n_{4}\}$};
    \filldraw[fill=black,draw=black] (2.2,-.7) circle (2pt);
    \node[below][font = {\small}] at (2.2,-.7) {$p_{1}$};
    \filldraw[fill=black,draw=black] (-1.2,-1.4) circle (2pt);
    \node[below][font = {\small}] at (-1.2,-1.4) {$p_{2}$};    
    \draw[ultra thin] (-1.2,-1.4) -- (2.2,-.7);
    \draw [ultra thin] (-1.2,-1.4) rectangle (-1.5,-.7);
    \draw [ultra thin] (2.2,-.7) rectangle (-1.2,-.4);
    \draw [ultra thin] (-1.2,-.7) rectangle (-1,-.9);
    \node[above, inner sep=2][font = {\small}] at (.1,-.4) {$1+x_{1}-x_{2}$};
    \node[left, inner sep=2][font = {\small}] at (-1.5,-.95) {$|y_{1}-y_{2}|$};
    \draw [ultra thin] (-3.5,-2) rectangle (2.2,-2.4);
    \node[below][font = {\small}] at (-.65,-2.4) {$1+x_{1}$};  
\end{tikzpicture}
    \caption{$L_2(F_{n_1} \rightarrow F_{n_2})$ and $T_O(p_1, p_2)$, where $O=(L_2,F_{n_1},F_{n_2})$}
    \label{tettrail2}
\end{figure}

\begin{figure}[h!]
  \centering
\begin{tikzpicture}[scale=1]
    \node[isosceles triangle,
    isosceles triangle apex angle=60,
    rotate=270,
    draw,
    minimum size=3cm] (T)at (0,0) {};
    \node[above][font = {\small}] at (T.right corner) {$\{n_{1},n_{3},n_{4}\}$};
    \node[above][font = {\small}] at (T.left corner) {$\{n_{2},n_{3},n_{4}\}$};
    \node[font = {\small}] at (-1,-1.825) {$\{n_{1},n_{2},n_{4}\}$};
    \filldraw[fill=black,draw=black] (T.right corner) circle (2pt);
    \filldraw[fill=black,draw=black] (T.left corner) circle (2pt);
    \filldraw[fill=black,draw=black] (T.apex) circle (2pt);
    \node[above, inner sep=50] at (T.apex) {$F_{n_4}$};
    \node[below, inner sep=30] at (T.left corner) {$F_{n_2}$};
    \node[below, inner sep=30] at (T.right corner) {$F_{n_1}$};
    \filldraw[fill=black,draw=black] (3.5,-2) circle (2pt);
    \filldraw[fill=black,draw=black] (-3.5,-2) circle (2pt);
    \draw[line width=0.05mm] (1.75,1) -- (3.5,-2) -- (0,-2);
    \draw[line width=0.05mm] (-1.75,1) -- (-3.5,-2) -- (0,-2);
    \node[left][font = {\small}] at (-3.5,-2) {$\{n_{1},n_{2},n_{3}\}$};
    \node[right][font = {\small}] at (3.5,-2) {$\{n_{1},n_{2},n_{3}\}$};
    \filldraw[fill=black,draw=black] (-1.2,-.7) circle (2pt);
    \node[above][font = {\small}] at (-1.2,-.7) {$p_{1}$};
    \filldraw[fill=black,draw=black] (2.2,-1.4) circle (2pt);
    \node[right][font = {\small}] at (2.2,-1.4) {$p_{2}$};    
    \draw[ultra thin] (2.2,-1.4) -- (-1.2,-.7);
    \draw [ultra thin] (-1.2,-1.4) rectangle (-1.5,-.7);
    \draw [ultra thin] (2.2,-1.4) rectangle (-1.2,-1.65);
    \draw [ultra thin] (-1.2,-1.4) rectangle (-1,-1.2);
    \node[below, inner sep=2][font = {\small}] at (1.2,-1.625) {$1+x_{2}-x_{1}$};
    \node[left, inner sep=2][font = {\small}] at (-1.5,-.95) {$|y_{1}-y_{2}|$};
    \draw [ultra thin] (-3.5,-2) rectangle (2.2,-2.4);
    \node[below][font = {\small}] at (-.65,-2.4) {$1+x_{2}$};  
    \end{tikzpicture}
    \caption{$L_3(F_{n_1} \rightarrow F_{n_2})$ and $T_O(p_1, p_2)$, where $O=(L_3,F_{n_1},F_{n_2})$}
    \label{tettrail3}
\end{figure}

Since $p_2$'s home-face is $F_{n_2}$ and $p_2$'s shared-face is $F_{n_1}$, in the orientation $(L_2, F_{n_1}, F_{n_2})$ the point $p_2$ has been shifted $1$ unit to the left from $p_2$'s standard position with respect to the representation $(F_{n_2},F_{n_1},x_2,y_2)$.  Due to this $p_2(L_2, F_{n_1}, F_{n_2})=(x_2-1, y_2)$.  Thus we get that
\[\abs{T_{L_2}(p_1, p_2)}=\sqrt{(x_1-x_2+1)^2+(y_1-y_2)^2}.\]

Similarly, in the orientation $(L_3, F_{n_1}, F_{n_2})$ the point $p_2$ has been shifted $1$ unit to the right from $p_2$'s standard position with respect to the representation $(F_{n_2},F_{n_1},x_2,y_2)$.  Due to this $p_2(L_3, F_{n_1}, F_{n_2})=(x_2+1, y_2)$.  Thus we get that
\[\abs{T_{L_3}(p_1, p_2)}=\sqrt{(x_1-x_2-1)^2+(y_1-y_2)^2}.\]

\begin{case}
The dual graph of the landscape is a path of four vertices.
\end{case}

Since we are constructing landscapes from $F_{n_1}$ to $F_{n_2}$ and $\mathcal P_4$ only has four faces; the first face must be $F_{n_1}$, the fourth face must be $F_{n_2}$, the second face must be either $F_{n_3}$ or $F_{n_4}$, and the third face must be the remaining face.  Since two distinct faces of $\mathcal P_4$ have exactly one edge in common the two landscapes given in Figure \ref{tettrail4} and Figure \ref{tettrail5} are the only such landscapes.
  
Since $p_2$'s home-face is $F_{n_2}$ and $p_2$'s shared-face is $F_{n_1}$, in the orientation $(L_4, F_{n_1}, F_{n_2})$ the point $p_2$ has been rotated $180$ degrees about the origin and shifted $\sqrt{3}$ units up from $p_2$'s standard position with respect to the representation $(F_{n_2},F_{n_1},x_2,y_2)$.  Due to this $p_2(L_4, F_{n_1}, F_{n_2})=(-x_2, \sqrt{3}-y_2)$.  Thus we get that
\[\abs{T_{L_4}(p_1, p_2)}=\sqrt{(x_1+x_2)^2+(y_1+y_2-\sqrt{3})^2}.\]

\begin{figure}[h]
  \centering
\begin{tikzpicture}[scale=1]
\filldraw[fill=black,draw=black] (0,6) circle (2pt);
\filldraw[fill=black,draw=black] (0,0) circle (2pt);
\filldraw[fill=black,draw=black] (3.5,0) circle (2pt);
\filldraw[fill=black,draw=black] (-3.5,6) circle (2pt);
\filldraw[fill=black,draw=black] (1.75,3) circle (2pt);
\filldraw[fill=black,draw=black] (-1.75,3) circle (2pt);
\draw[line width = 0.05mm] (0,0) -- (3.5,0) -- (1.75,3) -- (0,0) -- (-1.75,3) -- (1.75,3) -- (0,6) -- (-1.75,3) -- (-3.5,6) -- (0,6);
\node[below][font = {\small}] at (3.5,0) {$\{n_{1},n_{2},n_{4}\}$};
\node[above][font = {\small}] at (0,6) {$\{n_{1},n_{2},n_{4}\}$};
\node[above][font = {\small}] at (-3.5,6) {$\{n_{1},n_{2},n_{3}\}$};
\node[below][font = {\small}] at (0,0) {$\{n_{1},n_{2},n_{3}\}$};
\node[right][font = {\small}] at (1.75,3) {$\{n_{1},n_{3},n_{4}\}$};
\node[font = {\small}] at (-.8,2.8) {$\{n_{2},n_{3},n_{4}\}$};
\node[font = ] at (1.75,2.3) {$F_{n_1}$};
\node at (-.7,5.65) {$F_{n_2}$};
\node at (1.15,2.75) {$F_{n_3}$};
\node at (1.15,3.4) {$F_{n_4}$};
\draw[ultra thin,dash pattern={on 1pt}] (0,6) -- (0,0);
\filldraw[fill=black,draw=black] (2.3,1.15) circle (2pt);
\filldraw[fill=black,draw=black] (-2.3,5.3) circle (2pt);
\node[right][font = {\small}] at (2.3,1.15) {$p_{1}$};
\node[above][font = {\small}] at (-2.3,5.3) {$p_{2}$};
\draw[line width = 0.05mm] (2.3,1.15) -- (-2.3,5.3);
\draw[line width=0.05mm](-2.3,5.3) rectangle (-2.6,1.15);
\draw[line width=0.05mm](2.3,1.15) rectangle (-2.3,.85);
\draw[line width=0.05mm](-2.3,1.15) rectangle (-2.1,1.35);
\draw[ultra thin,dash pattern={on 1pt}] (-1.75,3) -- (-2.3,3);
\draw[ultra thin] (-2.3,3) -- (-2.6,3);
\node[left][font = {\small}] at (-2.6,4.15) {$\frac{\sqrt{3}}{2}-y_{2}$};
\node[left][font = {\small}] at (-2.6,2.1) {$\frac{\sqrt{3}}{2}-y_{1}$};
\node[below][font = {\small}] at (-1.5,.85) {$x_{1}+x_{2}$};
\end{tikzpicture}
    \caption{$L_4(F_{n_1} \rightarrow F_{n_2})$ and $T_O(p_1, p_2)$, where $O=(L_4,F_{n_1},F_{n_2})$}
    \label{tettrail4}
\end{figure}

\begin{figure}[h]
  \centering
\begin{tikzpicture}[scale=1]
\filldraw[fill=black,draw=black] (0,6) circle (2pt);
\filldraw[fill=black,draw=black] (0,0) circle (2pt);
\filldraw[fill=black,draw=black] (-3.5,0) circle (2pt);
\filldraw[fill=black,draw=black] (3.5,6) circle (2pt);
\filldraw[fill=black,draw=black] (1.75,3) circle (2pt);
\filldraw[fill=black,draw=black] (-1.75,3) circle (2pt);
\draw[line width = 0.05mm] (0,0) -- (-3.5,0) -- (-1.75,3) -- (0,0) -- (1.75,3) -- (-1.75,3) -- (0,6) -- (1.75,3) -- (3.5,6) -- (0,6);
\node[below][font = {\small}] at (-3.5,0) {$\{n_{1},n_{2},n_{3}\}$};
\node[above][font = {\small}] at (0,6) {$\{n_{1},n_{2},n_{3}\}$};
\node[above][font = {\small}] at (3.5,6) {$\{n_{1},n_{2},n_{4}\}$};
\node[below][font = {\small}] at (0,0) {$\{n_{1},n_{2},n_{4}\}$};
\node[right][font = {\small}] at (1.75,3) {$\{n_{2},n_{3},n_{4}\}$};
\node[left][font = {\small}] at (-1.75,3) {$\{n_{1},n_{3},n_{4}\}$};
\node at (-1.75,2.2) {$F_{n_1}$};
\node at (2.8,5.65) {$F_{n_2}$};
\node at (-1.1,2.6) {$F_{n_4}$};
\node at (-1.1,3.4) {$F_{n_3}$};
\draw[ultra thin,dash pattern={on 1pt}] (0,6) -- (0,0);
\filldraw[fill=black,draw=black] (1.1,5.4) circle (2pt);
\node[left][font = {\small}] at (1.1,5.4) {$p_{2}$};
\filldraw[fill=black,draw=black] (-1.3,1.25) circle (2pt);
\node[left][font = {\small}] at (-1.3,1.25) {$p_{1}$};
\draw[line width = 0.05mm] (1.1,5.4) -- (-1.3,1.25);
\draw[line width=0.05mm](1.1,5.4) rectangle (1.5,1.25);
\draw[line width=0.05mm](-1.3,1.25) rectangle (1.1,.875);
\draw[line width=0.05mm](1.1,1.25) rectangle (.9,1.45);
\node[right, inner sep=1][font = {\small}] at (1.5,4.7) {$\frac{\sqrt{3}}{2}-y_{2}$};
\node[right, inner sep=1][font = {\small}] at (1.5,2.2) {$\frac{\sqrt{3}}{2}-y_{1}$};
\node[below, inner sep=2][font = {\small}] at (-.9,.875) {$1-x_{1}$};
\node[below, inner sep=2][font = {\small}] at (.9,.875) {$1-x_{2}$};
\draw[ultra thin](0,1.25) -- (0,.875);
\end{tikzpicture}
    \caption{$L_5(F_{n_1} \rightarrow F_{n_2})$ and $T_O(p_1, p_2)$, where $O=(L_5,F_{n_1},F_{n_2})$}
    \label{tettrail5}
\end{figure}

Similarly, in the orientation $(L_5, F_{n_1}, F_{n_2})$ the point $p_2$ has been rotated $180$ degrees about the origin, shifted right $2$ units, and shifted up $\sqrt{3}$ units from $p_2$'s standard position with respect to the representation $(F_{n_2},F_{n_1},x_2,y_2)$.  Due to this $p_2(L_5, F_{n_1}, F_{n_2})=(2-x_2, \sqrt{3}-y_2)$.  Thus we get that
\[\abs{T_{L_5}(p_1, p_2)}=\sqrt{(x_1+x_2-2)^2+(y_1+y_2-\sqrt{3})^2}.\]

One can confirm via a short computation that the following table provides pairs of points which witness the validity of the landscapes $L_1, L_2, L_3, L_4,$ and $L_5$ respectively.  Thus all five landscapes are valid.

\begin{center}
\begin{tabular}[10pt]{|c|c|}
\hline
Landscape & Pair of Points Witnessing Validity \\
\hline
\rule{0pt}{12pt}
$L_1$ & $\left(F_{n_1}, F_{n_2}, \frac{1}{2}, \frac{1}{5}\right), \left(F_{n_2}, F_{n_1}, \frac{1}{2}, \frac{1}{5}\right)$\\[3pt]
\hline
\rule{0pt}{12pt}
$L_2$ & $\left(F_{n_1}, F_{n_2}, \frac{4}{9},  \frac{3}{4}\right), \left(F_{n_2}, F_{n_1}, \frac{5}{9}, \frac{3}{4}\right)$\\[3pt]
\hline
\rule{0pt}{12pt}
$L_3$ & $\left(F_{n_1}, F_{n_2}, \frac{5}{9}, \frac{3}{4}\right), \left(F_{n_2}, F_{n_1},    \frac{4}{9}, \frac{3}{4}\right)$\\[3pt]
\hline
\rule{0pt}{15pt}
$L_4$ & $\left(F_{n_1}, F_{n_2}, \frac{10}{21}, \frac{10\sqrt{3}-1}{20}\right), \left(F_{n_2}, F_{n_1}, \frac{10}{21}, \frac{10\sqrt{3}-1}{20}  \right)$\\[5pt]
\hline
\rule{0pt}{15pt}
$L_5$ & $\left(F_{n_1}, F_{n_2}, \frac{11}{21},  \frac{10\sqrt{3}-1}{20}\right), \left(F_{n_2}, F_{n_1}, \frac{11}{21}, \frac{10\sqrt{3}-1}{20}\right)$\\[5pt]
\hline
\end{tabular}
\end{center}
\end{proof}

Due to the nature of their construction, instead of being viewed through the lens of nets, landscapes can be viewed alternatively as a sequence of faces.  Specifically, given a sequence of distinct faces $\{F_{a_i}\}_{i=1}^k$ of a polyhedron $\mathcal P$, provided $F_{a_{i_1}}$ and $F_{a_{i_2}}$ are adjacent anytime $\abs{i_1-i_2}=1$, then the sequence can be viewed as defining a landscape $L(F_{a_1} \rightarrow F_{a_k})$.  With this in mind, even though for a landscape $L_i \in \{L_i\}_{i=1}^5$ of $\mathcal P_4$ and an identification sending $\{n_1,n_2,n_3,n_4\}$ to $\{1,2,3,4\}$, $L_i$ is defined to have origin face $F_{n_1}$ and destination face $F_{n_2}$, using the following convention we can consider further landscapes with this general form.  If $\{F_{a_i}\}_{i=1}^k$ and $\{F_{b_i}\}_{i=1}^k$ are sequences of faces of $\mathcal P_4$, with $F_{a_1}=F_{n_1}$ and $F_{a_k}=F_{n_2}$, such that in nets of our copy of $\mathcal P_4$ the faces $\{F_{a_i}\}_{i=2}^k$ occur in the same counter-clockwise order around the face $F_{n_1}$ as the faces $\{F_{b_i}\}_{i=2}^k$ occur around the face $F_{b_1}$, $L_i \in \{L_i\}_{i=1}^5$ is the landscape $L_i(F_{n_1} \rightarrow F_{n_2})$ given by the sequence of faces $\{F_{a_i}\}_{i=1}^k$, and $L$ is the landscape given by the sequence of faces $\{F_{b_i}\}_{i=1}^k$, then $L$ can be said to be of the form $L_i(F_{b_1} \rightarrow F_{b_k})$.  As an interesting consequence, the structure given by one landscape can often be considered in terms of a different landscape, for instance $L_2=L_3(F_{n_2} \rightarrow F_{n_1})$.  Furthermore, we will use the same convention in the sections concerning the landscapes of the cube.

Concerning the collection of valid landscapes of the tetrahedron, it might seem counter-intuitive that $L_4$ and $L_5$ are valid.  However, to illustrate why this is the case consider Figure \ref{tetvalid} which helps to identify points which witness the validity of $L_4$.

\begin{figure}[h]
  \centering
\begin{tikzpicture}[scale=1]
\filldraw[fill=black,draw=black] (0,0) circle (2pt);
\filldraw[fill=black,draw=black] (1,1.732) circle (2pt);
\filldraw[fill=black,draw=black] (-1,1.732) circle (2pt);
\filldraw[fill=black,draw=black] (-2,0) circle (2pt);
\filldraw[fill=black,draw=black] (2,0) circle (2pt);
\draw[line width = 0.05mm] (2,0) -- (1,1.732) -- (0,0) -- (-1,1.732) -- (-2,0) -- (0,0) -- cycle;
\draw[line width = 0.05mm] (-1,1.732) -- (1,1.732);
\node[above, inner sep=25] at (0,0) {$F_{n_4}$};
\node[below, inner sep=25] at (1,1.732) {$F_{n_2}$};
\node[below, inner sep=25] at (-1,1.732) {$F_{n_1}$};

\filldraw[fill=black,draw=black] (-4.25,0) circle (2pt);
\filldraw[fill=black,draw=black] (-3.25,1.732) circle (2pt);
\filldraw[fill=black,draw=black] (-5.25,1.732) circle (2pt);
\filldraw[fill=black,draw=black] (-6.25,0) circle (2pt);
\filldraw[fill=black,draw=black] (-2.25,0) circle (2pt);
\draw[line width = 0.05mm] (-2.25,0) -- (-4.25,0) -- (-6.25,0) -- (-5.25,1.732) -- (-4.25,0) -- (-3.25,1.732) -- cycle;
\draw[line width = 0.05mm] (-3.25,1.732) -- (-5.25,1.732);
\node[above, inner sep=25] at (-4.25,0) {$F_{n_3}$};
\node[below, inner sep=25] at (-5.25,1.732) {$F_{n_2}$};
\node[below, inner sep=25] at (-3.25,1.732) {$F_{n_1}$};

\filldraw[fill=black,draw=black] (-6.75,0) circle (2pt);
\filldraw[fill=black,draw=black] (-6.75,3.464) circle (2pt);
\filldraw[fill=black,draw=black] (-5.75,1.732) circle (2pt);
\filldraw[fill=black,draw=black] (-7.75,1.732) circle (2pt);
\draw[line width = 0.05mm] (-6.75,0) -- (-5.75,1.732) -- (-7.75,1.732) -- (-6.75,3.464) -- (-5.75,1.732);
\draw[line width = 0.05mm] (-6.75,0) -- (-7.75,1.732);
\node[above, inner sep=25] at (-6.75,0) {$F_{n_2}$};
\node[below, inner sep=25] at (-6.75,3.464) {$F_{n_1}$};

\filldraw[fill=black,draw=black] (-5.125,-4.464) circle (2pt);
\filldraw[fill=black,draw=black] (-3.125,-4.464) circle (2pt);
\filldraw[fill=black,draw=black] (-5.125,-1) circle (2pt);
\filldraw[fill=black,draw=black] (-7.125,-1) circle (2pt);
\filldraw[fill=black,draw=black] (-6.125,-2.732) circle (2pt);
\filldraw[fill=black,draw=black] (-4.125,-2.732) circle (2pt);
\draw[line width = 0.05mm] (-5.125,-4.464) -- (-3.125,-4.464) -- (-4.125,-2.732) -- (-5.125,-4.464) -- (-6.125,-2.732) -- (-4.125,-2.732) -- (-5.125,-1) -- (-6.125,-2.732) -- (-7.125,-1) -- (-5.125,-1);
\node[above, inner sep=25] at (-5.125,-4.464) {$F_{n_3}$};
\node[below, inner sep=25] at (-4.125,-2.732) {$F_{n_1}$};
\node[below, inner sep=25] at (-5.125,-1) {$F_{n_4}$};
\node[above, inner sep=25] at (-6.125,-2.732) {$F_{n_2}$};

\filldraw[fill=black,draw=black] (-2.625,-4.464) circle (2pt);
\filldraw[fill=black,draw=black] (-0.625,-4.464) circle (2pt);
\filldraw[fill=black,draw=black] (-0.625,-1) circle (2pt);
\filldraw[fill=black,draw=black] (1.375,-1) circle (2pt);
\filldraw[fill=black,draw=black] (0.375,-2.732) circle (2pt);
\filldraw[fill=black,draw=black] (-1.625,-2.732) circle (2pt);
\draw[line width = 0.05mm] (-2.625,-4.464) -- (-0.625,-4.464) -- (-1.625,-2.732) -- (-2.625,-4.464) --  (-1.625,-2.732) -- (-0.625,-1) -- (0.375,-2.732) -- (1.375,-1) -- (-0.625,-1);
\draw[line width = 0.05mm] (-1.625,-2.732) -- (0.375,-2.732) -- (-0.625,-4.464);
\node[above, inner sep=25] at (-0.625,-4.464) {$F_{n_4}$};
\node[below, inner sep=25] at (-1.625,-2.732) {$F_{n_1}$};
\node[below, inner sep=25] at (-0.625,-1) {$F_{n_3}$};
\node[above, inner sep=25] at (0.375,-2.732) {$F_{n_2}$};

\node[below, inner sep=8] at (-6.75,0) {$L_{1}$};
\node[below, inner sep=8] at (-4.25,0) {$L_{2}$};
\node[below, inner sep=8] at (0,0) {$L_{3}$};
\node[below, inner sep=8] at (-4.125,-4.464) {$L_{4}$};
\node[below, inner sep=8] at (-1.625,-4.464) {$L_{5}$};

\fill[semitransparent, black!40!white] (-5.125,-2.732) circle (1cm);
\fill[semitransparent, black!40!white] (1,1.732) arc (360:300:1cm);
\fill[semitransparent, black!40!white] (-3.25,1.732) arc (360:300:1cm);
\fill[semitransparent, black!40!white] (-1,1.732) arc (360:300:1cm);
\fill[semitransparent, black!40!white] (-5.25,1.732) arc (360:300:1cm);
\fill[semitransparent, black!40!white] (-6.25,.866) arc (120:180:1cm);
\fill[semitransparent, black!40!white] (-6.75,3.464) arc (360:300:1cm);
\fill[semitransparent, black!40!white] (-1.625,-2.732) arc (360:300:1cm);
\fill[semitransparent, black!40!white] (.875,-1.866) arc (120:180:1cm);
\draw[ultra thin] (-5.125,-2.732) circle (1cm);
\draw[ultra thin] (1,1.732) arc (360:300:1cm);
\draw[ultra thin] (-3.25,1.732) arc (360:300:1cm);
\draw[ultra thin] (-1,1.732) arc (360:300:1cm);
\draw[ultra thin] (-5.25,1.732) arc (360:300:1cm);
\draw[ultra thin] (-6.25,.866) arc (120:180:1cm);
\draw[ultra thin] (-6.75,3.464) arc (360:300:1cm);
\draw[ultra thin] (-1.625,-2.732) arc (360:300:1cm);
\draw[ultra thin] (.875,-1.866) arc (120:180:1cm);

\end{tikzpicture}
    \caption{Points Witnessing the Validity of $L_4$}
    \label{tetvalid}
\end{figure}

Since $\mathcal P_4$ is a unit tetrahedron, the shaded disc illustrated in $L_4$ has diameter 1, and thus all points inside of it are less than distance 1 from each other.  The regions within this disc are illustrated in each of the other four landscapes.  First consider $L_1$ and $L_5$, it is easy to see that one can identify points $p_1=(F_{n_1}, F_{n_2}, x_1, y_1)$ and $p_2=(F_{n_2}, F_{n_1}, x_2, y_2)$ further than distance one from each other in $L_1$ and $L_5$ by requiring that $p_1$ and $p_2$ are interior to the portions of the disc in $F_{n_1}$ and $F_{n_2}$ respectively.  Now, consider $L_2$ and $L_3$, again it is easy to see that one can ensure $p_1$ and $p_2$ are at least distance one from each other in both $L_2$ and $L_3$ by requiring $x_1=x_2$.  Thus, $L_4$ will be the only landscape for which any $p_1$ and $p_2$ meeting these requirements are less than distance one from each other, and thus $L_4$ is by definition valid.  Due to the similarities between $L_4$ and $L_5$, it is now also clear why $L_5$ is valid.      

Having defined the valid landscapes of $\mathcal P_4$,  one can also quickly determine the explicit formulae for the exact set of points on the surface of $\mathcal P_4$ along any given trail.  For example suppose $p_1$ and $p_2$ were as given in the previous paragraph, and one wished to determine the set of points comprising $f\left(T_O(p_1, p_2)\right)$ where $O=(L_4,F_{n_1},F_{n_2})$, and more specifically $f\left({}_{n_1}T_O(p_1, p_2)\right)$, $f\left({}_{n_2}T_O(p_1, p_2)\right)$, $f\left({}_{n_3}T_O(p_1, p_2)\right)$, and $f\left({}_{n_4}T_O(p_1, p_2)\right)$ where $f$ is the folding function $f: L_4 \rightarrow \mathcal P_4$.

\indent Since $p_2(L_4, F_{n_1}, F_{n_2})=(-x_2, \sqrt{3}-y_2)$, it is clear that the slope of $T_O(p_1, p_2)$ is $m_4=\frac{y_1+y_2-\sqrt{3}}{x_1+x_2}$ and that points along this trail in face $F_{n_1}$ must satisfy $y=m_4(x-x_1)+y_1$, when $m_4$ is defined.  Furthermore, since $p_1$ and $p_2$ have been chosen to be interior to the disc, $x_1, x_2>0$, and thus $m_4$ is defined.  Since the portion of this trail in $F_{n_1}$ is bounded on one end by $p_1$ and on the other by $\overline{(0, 0)( \frac{1}{2}  ,  \frac{\sqrt{3}}{2}  )}$ (a line segment satisfying the equation $y=\sqrt{3}x$), we have that
\[f\left({}_{n_1}T_O(p_1, p_2)\right)=\left\{(F_{n_1}, F_{n_2}, x, y) \in P_4: y=m_4(x-x_1)+y_1 \text{~and~} y_1 \leq y \leq \sqrt{3}x\right\}.\]

Now considering $(L_4, F_{n_2}, F_{n_1})$, to determine $f\left({}_{n_2}T_O(p_1, p_2)\right)$, we identify that 180 degree rotations preserve the slopes of lines, so the slope remains $m_4$.  Furthermore, the portion of this trail in $F_{n_2}$ is bounded on one end by $p_2$ and on the other by  $\overline{(0, 0)( \frac{1}{2}  ,  \frac{\sqrt{3}}{2}  )}$.  So we have that 
\[f\left({}_{n_2}T_O(p_1, p_2)\right)=\left\{(F_{n_2}, F_{n_1}, x, y) \in P_4: y=m_4(x-x_2)+y_2 \text{~and~} y_2 \leq y \leq \sqrt{3}x\right\}.\]

To determine $f\left({}_{n_3}T_O(p_1, p_2)\right)$, note that in the orientation $(L_4, F_{n_3}, F_{n_4})$ the face $F_{n_1}$ has been rotated 180 degrees about the origin, shifted right $\frac{1}{2}$ unit, and shifted up $\frac{\sqrt{3}}{2}$ units.  Thus $p_1(L_4, F_{n_3}, F_{n_4})=( \frac{1}{2}  -x_1,  \frac{\sqrt{3}}{2}  -y_1)$, $m_4$ remains the slope, and the portion of this trail in $F_{n_3}$ is bounded on one end by $\overline{(0, 0)( \frac{1}{2}  ,  \frac{\sqrt{3}}{2}  )}$ and on the other by $\overline{(0,0)(1, 0)}$.  Finally, to determine $f\left({}_{n_4}T_O(p_1, p_2)\right)$, note that it is determined in a way analogous to $f\left({}_{n_3}T_O(p_1, p_2)\right)$ and thus we have
\[f\left({}_{n_3}T_O(p_1, p_2)\right)=\left\{(F_{n_3}, F_{n_4}, x, y) \in P_4: y=m_4(x-  \frac{1}{2}  +x_1)+ \frac{\sqrt{3}}{2}  -y_1 \text{~and~} 0 \leq y \leq \sqrt{3}x \right\} \text{~and}\]
\[f\left({}_{n_4}T_O(p_1, p_2)\right)=\left\{(F_{n_4}, F_{n_3}, x, y) \in P_4: y=m_4(x-  \frac{1}{2}  +x_2)+ \frac{\sqrt{3}}{2}  -y_2 \text{~and~} 0 \leq y \leq \sqrt{3}x \right\}.\]

It is important to note that during the preceding two sections all results and definitions except for Lemma \ref{tet-rep} and Theorem \ref{tet-valid} applied to every member of the family of convex unit polyhedra.  Thus, to obtain analogous results for any other convex unit polyhedron, one simply must develop and prove the analogs of Lemma \ref{tet-rep} and Theorem \ref{tet-valid}.  Having accomplished our goal with the tetrahedron we now turn to obtaining said results for the more complicated case provided by the cube.

\section{A Coordinate System on a Cube}

\begin{figure}[h]
  \centering
\begin{tikzpicture}[scale=1]
\draw [line width=0.05mm] (0,0) rectangle (2,8);
\draw[line width = 0.05mm] (-2,6) -- (0,6) -- (2,6) -- (4,6) -- (4,4) -- (2,4) -- (0,4) -- (-2,4) -- cycle;
\draw[line width = 0.05mm] (2,2) -- (0,2);
\filldraw[fill=black,draw=black] (0,0) circle (2pt);
\filldraw[fill=black,draw=black] (2,0) circle (2pt);
\filldraw[fill=black,draw=black] (2,2) circle (2pt);
\filldraw[fill=black,draw=black] (0,2) circle (2pt);
\filldraw[fill=black,draw=black] (0,4) circle (2pt);
\filldraw[fill=black,draw=black] (0,6) circle (2pt);
\filldraw[fill=black,draw=black] (0,8) circle (2pt);
\filldraw[fill=black,draw=black] (2,4) circle (2pt);
\filldraw[fill=black,draw=black] (2,6) circle (2pt);
\filldraw[fill=black,draw=black] (2,8) circle (2pt);
\filldraw[fill=black,draw=black] (-2,4) circle (2pt);
\filldraw[fill=black,draw=black] (-2,6) circle (2pt);
\filldraw[fill=black,draw=black] (4,6) circle (2pt);
\filldraw[fill=black,draw=black] (4,4) circle (2pt);
\node[below right, inner sep=8] at (0,2) {$F_6$};
\node[below right, inner sep=8] at (0,4) {$F_2$};
\node[below right, inner sep=8] at (0,6) {$F_1$};
\node[below right, inner sep=8] at (-2,6) {$F_3$};
\node[below right, inner sep=8] at (2,6) {$F_4$};
\node[below right, inner sep=8] at (0,8) {$F_5$};
\node[left][font = {\small}] at (-2,6) {$\{3,5,6\}$};
\node[above left, inner sep=1][font = {\small}] at (0,6) {$\{1,3,5\}$};
\node[left][font = {\small}] at (-2,4) {$\{2,3,6\}$};
\node[above][font = {\small}] at (0,8) {$\{3,5,6\}$};
\node[above][font = {\small}] at (2,8) {$\{4,5,6\}$};
\node[above right, inner sep=1][font = {\small}] at (2,6) {$\{1,4,5\}$};
\node[right][font = {\small}] at (4,6) {$\{4,5,6\}$};
\node[right][font = {\small}] at (4,4) {$\{2,4,6\}$};
\node[below right, inner sep=1][font = {\small}] at (2,4) {$\{1,2,4\}$};
\node[right][font = {\small}] at (2,2) {$\{2,4,6\}$};
\node[below][font = {\small}] at (2,0) {$\{4,5,6\}$};
\node[below][font = {\small}] at (0,0) {$\{3,5,6\}$};
\node[left][font = {\small}] at (0,2) {$\{2,3,6\}$};
\node[below left, inner sep=1][font = {\small}] at (0,4) {$\{1,2,3\}$};
\end{tikzpicture}
    \caption{Labeling of the Cube}
    \label{cubenet}
\end{figure}

Analogous to our exploration of the tetrahedron, in order to have a discussion concerning paths on the surface of the cube, we must first fix a net for the cube. 
Without loss of generality, we assume the edges of our cube are of length 1. Furthermore, we will label the faces with elements of the set $\{F_1, F_2, F_3, F_4, F_5, F_6\}$ and the vertices will be represented by elements of the set 
\[\left\{\{1,2,3\}, \{1,2,4\}, \{1,3,5\}, \{1,4,5\}, \{2,3,6\}, \{2,4,6\}, \{3,5,6\}, \{4,5,6\}\right\}\] 
such that face $F_n$ is incident to vertex $\{n_1,n_2,n_3\}$ if and only if $n \in \{n_1,n_2,n_3\}$. Consider the labeling of the net in Figure \ref{cubenet} fixed for the remainder of our discussion on the cube.

As with the tetrahedron, points on the surface of the cube will have multiple representations.  First, if a point is a vertex of the cube or lies on an edge of the cube it may have multiple home-faces.  In particular, the vertex $\{n_1,n_2,n_3\}$ can be represented as $(F_{n_1}, F_{n_2}, 0, 0)$, $(F_{n_2}, F_{n_3}, 0, 0)$, and $(F_{n_3}, F_{n_1}, 0, 0)$, with the remaining vertices of the cube similarly having such representations. Next note, if a point lies on an edge of $\mathcal P_6$ say, $\overline {\{n_1, n_2, n_4\} \{n_1, n_2, n_3\}}$, and is represented as $(F_{n_1}, F_{n_2}, x, 0)$ then it can also be represented as $(F_{n_2}, F_{n_1}, 1-x, 0)$.  While this addresses one's ability to switch between representations using different home-faces for a point which is incident to more than one face, we still need to address how one switches between the representations of a point using different shared-faces, and so we introduce the following lemma.

\begin{lemma}\label{Cube_Shared_Face}
Suppose $\{n_1,n_2,n_3,n_4,n_5,n_6\}$ $=$ $\{1,2,3,4,5,6\}$ such that in nets of our copy of $\mathcal P_6$ $F_{n_1}$ and $F_{n_6}$ are opposite faces and $F_{n_2}$, $F_{n_3}$, $F_{n_4}$, and $F_{n_5}$ occur about $F_{n_1}$ in the same counter-clockwise order as $F_2$, $F_3$, $F_4$, and $F_5$ occur about $F_1$. Then if a point $p\in\mathcal P_6$ can be represented as $(F_{n_1}, F_{n_2}, x, y)$, then it can also be represented as $(F_{n_1},F_{n_4},y,1-x)$.
\end{lemma}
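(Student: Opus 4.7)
The plan is to proceed in the same spirit as the proof of Lemma \ref{tet_rep}, directly computing the rigid motion of $\mathbb R^2$ that relates the two standard positions of the face $F_{n_1}$.

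First I would establish the counter-clockwise cyclic order of the faces adjacent to $F_{n_1}$. Examining the vertex labels of $F_1$ in Figure \ref{cubenet}, the faces $F_2$, $F_3$, $F_4$, $F_5$ occur about $F_1$ in counter-clockwise cyclic order $(F_2, F_4, F_5, F_3)$; by the hypothesis on the labeling, $F_{n_2}$, $F_{n_3}$, $F_{n_4}$, $F_{n_5}$ therefore occur about $F_{n_1}$ in counter-clockwise cyclic order $(F_{n_2}, F_{n_4}, F_{n_5}, F_{n_3})$. Consequently, in the standard position associated with $(F_{n_1}, F_{n_2}, x, y)$, the face $F_{n_1}$ occupies the unit square $[0,1] \times [0,1]$ whose edges, listed counter-clockwise starting from the bottom, are shared with $F_{n_2}$, $F_{n_4}$, $F_{n_5}$, $F_{n_3}$, respectively. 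In particular, the edge shared with $F_{n_4}$ is $\overline{(1,0)(1,1)}$, with counter-clockwise-ordered incident vertices $(1,0)$ and $(1,1)$.

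Next I would write down the unique orientation- and distance-preserving map $T:\mathbb R^2 \to \mathbb R^2$ sending $(1,0) \mapsto (0,0)$ and $(1,1) \mapsto (1,0)$. Decomposing $T$ as the translation by $(-1,0)$ followed by the clockwise $90$-degree rotation about the origin yields the closed form $T(x,y) = (y,\, 1-x)$. Since $T$ is precisely the transformation carrying the image of $F_{n_1}$ in the standard position with shared face $F_{n_2}$ onto its image in the standard position with shared face $F_{n_4}$, applying $T$ to $(x,y)$ gives the standard position of $p$ in the new representation, and hence $p = (F_{n_1}, F_{n_4}, y, 1-x)$ as claimed.

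The only delicate step is pinning down the counter-clockwise cyclic order around $F_{n_1}$, since a reversal would flip the sense of the rotation and produce an incorrect formula; once the orientation is fixed, the rest is a one-line rigid-motion calculation involving only right angles (a noticeable simplification compared to the 30-60-90 bookkeeping needed in Lemma \ref{tet_rep}).
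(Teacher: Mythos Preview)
Your proposal is correct and essentially follows the paper's approach: both arguments identify the edge of $F_{n_1}$ shared with $F_{n_4}$ (with $\{n_1,n_2,n_4\}$ as the new origin and $\{n_1,n_4,n_5\}$ as the new $(1,0)$) and then read off the new coordinates. The paper does this geometrically, observing from Figure~\ref{cube_coordinates} that the new $x$-coordinate is the perpendicular distance $y$ and the new $y$-coordinate is $1-x$; you do the equivalent thing algebraically by writing down the rigid motion $T(x,y)=(y,1-x)$ sending $(1,0)\mapsto(0,0)$ and $(1,1)\mapsto(1,0)$. Your explicit verification of the counter-clockwise cyclic order $(F_{n_2},F_{n_4},F_{n_5},F_{n_3})$ around $F_{n_1}$ is a welcome piece of care that the paper leaves to the figure.
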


\begin{figure}[h]
  \centering
\begin{tikzpicture}[scale=1]
\draw [line width=0.05mm] (0,0) rectangle (3.5,3.5);
\draw [line width=0.05mm] (-1.75,0) -- (5.25,0);
\draw [line width=0.05mm] (-1.75,3.5) -- (5.25,3.5);
\draw [line width=0.05mm] (0,-1.75) -- (0,5.25);
\draw [line width=0.05mm] (3.5,-1.75) -- (3.5,5.25);
\filldraw[fill=black,draw=black] (0,0) circle (2pt);
\filldraw[fill=black,draw=black] (0,3.5) circle (2pt);
\filldraw[fill=black,draw=black] (3.5,3.5) circle (2pt);
\filldraw[fill=black,draw=black] (3.5,0) circle (2pt);
\filldraw[fill=black,draw=black] (2.5,1.6) circle (2pt);
\node[above right, inner sep=12] at (0,3.5) {$F_{n_5}$};
\node[above left][font = {\footnotesize}] at (0,3.5) {$\{n_{1},n_{3},n_{5}\}$};
\node[above right][font = {\small}] at (3.5,3.5) {$\{n_{1},n_{4},n_{5}\}$};
\node[below right, inner sep=12][font = {\small}] at (0,3.5) {$F_{n_1}$};
\node[below right, inner sep=12][font = {\small}] at (3.5,3.5) {$F_{n_4}$};
\node[below left][font = {\small}] at (0,0) {$\{n_{1},n_{2},n_{3}\}$};
\node[above left, inner sep=12] at (0,0) {$F_{n_3}$};
\node[below right, inner sep=12] at (0,0) {$F_{n_2}$};
\node[below right][font = {\small}] at (3.5,0) {$\{n_{1},n_{2},n_{4}\}$};
\draw[line width=0.05mm] (3.5,0) rectangle (2.5,1.6);
\node[above][font = {\small}] at (3,1.6) {$1-x$};
\node[left][font = {\small}] at (2.5,.8) {$y$};
\node[below][font = {\small}] at (1,0) {$x$};
\node[above left][font = {\small}] at (2.5,1.6) {$p$};
\end{tikzpicture}
    \caption{Representations of a point $p$ with Different Shared-Faces}
    \label{cube_coordinates}
\end{figure}

\begin{proof}
As mentioned previously, $p$ has a representation for which $F_{n_4}$ is used as the shared face.  Suppose $(u,v) \in \mathbb R^2$ is such that $p=(F_{n_1}, F_{n_4}, u, v)$.  Clearly, the distance from $\overline{\{n_1,n_2,n_4\}\{n_1,n_2,n_3\}}$ to $p$ will be $u = y$.  Also, as seen in Figure \ref{cube_coordinates}, the distance from $ \overline{\{n_1,n_2,n_4\}\{n_1,n_4,n_5\}}$ to $p$ is $v = 1 - x$, since $\{n_1,n_2,n_3\}\{n_1,n_2,n_4\} = 1$.   
\end{proof}

\section{Landscapes of a Cube}

Applying our notion of a landscape defined in section 2, we develop the necessary landscapes of a cube that will eventually lead us to the desired set of valid landscapes. Once we have identified the set of valid landscapes, we will also, as in our discussion of the tetrahedron, have identified the set of trails of minimum distance for each pair of points on the surface of the cube. 

In the following two theorems, we will only be concerned with certain landscapes of $\mathcal P_6$, in particular none of these landscapes will have dual-graphs with more than four vertices. In Theorem \ref{Cube_Adjacent_Face} we will construct landscapes which arise when we consider the trail between two points which lie on adjacent faces. Likewise, in Theorem \ref{Cube_Opposite_Face} we will construct landscapes which arise when we consider the trail between two points which lie on opposite faces. In each case, to do so, we will fix a first and final face and consider the faces which can lie on the trail between the two. For each landscape we will also determine the length of these trails and these results will be presented as Corollary \ref{cube_adjacent_surface} and Corollary \ref{cube_opposite_surface}. Additionally, to allow for easy reference in the future, we will name the fifteen landscapes $L_1$, $L_2$,..., $L_{15}$ and will refer to them as such for the remainder of our discussion. Later on, Theorem \ref{finaltheorem} shows that given a pair of adjacent faces, $L_1$, $L_2$, and $L_3$ are the only valid landscapes of $\mathcal P_6$ between these two faces; and likewise given a pair of opposite faces, $L_4$, $L_5$, ... $L_{15}$ are the only valid landscapes of $\mathcal P_6$ between these two faces.  The most important tool in reaching this final conclusion is Lemma \ref{boundary}, which provides a way to simplify the calculations necessary for showing that a given landscape is not valid.  It is worth noting that Lemma \ref{boundary} applies to the landscapes of any convex unit polyhedron.

We first address the case of landscapes between adjacent faces.  It is worth noting that Figure \ref{cubeorientation1}, Figure \ref{cubeorientation2}, and Figure \ref{cubeorientation3} will each be referenced for both the proof of Theorem \ref{Cube_Adjacent_Face} and the proof of Corollary \ref{cube_adjacent_surface}, with the structure of each landscape being of particular interest in Theorem \ref{Cube_Adjacent_Face} and the length of the trail contained therein being of interest in Corollary \ref{cube_adjacent_surface}.

\begin{theorem}\label{Cube_Adjacent_Face}
Let $F_{n_1}$ and $F_{n_2}$ be adjacent faces of $\mathcal P_6$.  Then there are at least 3 landscapes of the form $L_i(F_{n_1} \rightarrow F_{n_2})$.
\end{theorem}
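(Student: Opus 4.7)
The plan is to exhibit three distinct landscapes from $F_{n_1}$ to $F_{n_2}$ by considering paths in the dual graph of $\mathcal P_6$'s nets that have 2 and 3 vertices. First I would invoke the labeling convention from Lemma \ref{Cube_Shared_Face}: since $F_{n_1}$ and $F_{n_2}$ are adjacent I can rename the remaining faces so that $F_{n_6}$ is the face opposite $F_{n_1}$ and $F_{n_2}, F_{n_3}, F_{n_4}, F_{n_5}$ occur in counter-clockwise order about $F_{n_1}$ matching the order of $F_2,F_3,F_4,F_5$ about $F_1$. This yields an immediate adjacency dictionary: the faces adjacent to $F_{n_1}$ are $\{F_{n_2},F_{n_3},F_{n_4},F_{n_5}\}$, and the faces adjacent to $F_{n_2}$ are $\{F_{n_1},F_{n_3},F_{n_5},F_{n_6}\}$, since the face opposite $F_{n_2}$ in the cube is $F_{n_4}$.

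Next I would read off, from this dictionary, the three natural paths in the dual graph that begin at $F_{n_1}$ and end at $F_{n_2}$ using at most one intermediate face, namely
\[
F_{n_1} \to F_{n_2}, \qquad F_{n_1} \to F_{n_3} \to F_{n_2}, \qquad F_{n_1} \to F_{n_5} \to F_{n_2}.
\]
For each I would verify that consecutive faces in the sequence are in fact adjacent in $\mathcal P_6$ (this is immediate from the adjacency dictionary above), and hence that the sequence corresponds to a path in some net's dual graph. I would then appeal to the alternative ``sequence of faces'' viewpoint discussed at the end of Section 3 to conclude that each such sequence defines a landscape of $\mathcal P_6$ of the form $L_i(F_{n_1}\to F_{n_2})$. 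Labeling these $L_1$, $L_2$, $L_3$ respectively matches the conventions set up prior to the theorem and matches the three figures to be referenced.

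Finally, I would argue that these three landscapes are genuinely distinct: no two of them consist of the same collection of faces of $\mathcal P_6$, since the three face-sets $\{F_{n_1},F_{n_2}\}$, $\{F_{n_1},F_{n_3},F_{n_2}\}$, and $\{F_{n_1},F_{n_5},F_{n_2}\}$ are pairwise unequal (with $F_{n_3}\ne F_{n_5}$ as distinct faces of the cube). This produces at least three landscapes of the desired form, establishing the theorem.

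The only real subtlety, and hence the step that needs the most care, is justifying the ``without loss of generality'' relabeling at the start: one has to be explicit that the symmetries of $\mathcal P_6$ together with the freedom granted by Lemma \ref{Cube_Shared_Face} allow us to name the remaining four faces in precisely the counter-clockwise order we need. Beyond that, the argument is a direct construction and reduces to reading the cube's face-adjacency graph.
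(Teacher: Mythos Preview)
Your overall strategy matches the paper's: fix the labeling, exhibit the unique two-face landscape, then exhibit the two three-face landscapes. However, you have misidentified which face is opposite $F_{n_2}$. Under the convention you invoke (that of Lemma~\ref{Cube_Shared_Face} and Figure~\ref{cubenet}), the cyclic order of the side faces around $F_{n_1}$ is $F_{n_2}, F_{n_4}, F_{n_5}, F_{n_3}$: check which edges of $F_{n_1}$ are shared with which neighbors in the net, or note that the vertex set $\{n_1,n_2,n_3\},\{n_1,n_2,n_4\},\{n_1,n_4,n_5\},\{n_1,n_3,n_5\}$ forces this. Hence the opposite pairs among the side faces are $(F_{n_2},F_{n_5})$ and $(F_{n_3},F_{n_4})$, not $(F_{n_2},F_{n_4})$ as you assert. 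The paper says this explicitly in its own proof: ``$F_{n_5}$ cannot be the second face \ldots\ since $F_{n_5}$ is opposite of $F_{n_2}$.'' Consequently your proposed sequence $F_{n_1}\to F_{n_5}\to F_{n_2}$ is not a path in the dual graph, because $F_{n_5}$ and $F_{n_2}$ share no edge, and so it does not define a landscape at all.

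The fix is immediate: replace $F_{n_5}$ by $F_{n_4}$ in your third sequence. The two three-face landscapes are $F_{n_1}\to F_{n_3}\to F_{n_2}$ and $F_{n_1}\to F_{n_4}\to F_{n_2}$, which are precisely the paper's $L_2$ and $L_3$ (Figures~\ref{cubeorientation2} and~\ref{cubeorientation3}). With that correction your argument is essentially identical to the paper's.
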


\setcounter{case}{0}

\begin{proof}
Suppose $\{n_1,n_2,n_3,n_4,n_5,n_6\}$ = $\{1,2,3,4,5,6\}$ such that in nets of our copy of $\mathcal P_6$, $F_{n_1}$ and $F_{n_6}$ are opposite faces, and $F_{n_2}$, $F_{n_3}$, $F_{n_4}$, and $F_{n_5}$ occur in the same counter-clockwise order about $F_{n_1}$ as the faces $F_2$, $F_3$, $F_4$, and $F_5$ occur about the face $F_1$. 

\begin{figure}[h]
  \centering
\begin{tikzpicture}[scale=1]
\draw [line width=0.05mm] (0,0) rectangle (3,6);
\filldraw[fill=black,draw=black] (0,0) circle (1.5pt);
\filldraw[fill=black,draw=black] (0,3) circle (1.5pt);
\filldraw[fill=black,draw=black] (3,3) circle (1.5pt);
\filldraw[fill=black,draw=black] (3,6) circle (1.5pt);
\filldraw[fill=black,draw=black] (3,0) circle (1.5pt);
\filldraw[fill=black,draw=black] (0,6) circle (1.5pt);
\node[above][font = {\small}] at (0,6) {$\{n_{1},n_{3},n_{5}\}$};
\node[above][font = {\small}] at (3,6) {$\{n_{1},n_{4},n_{5}\}$};
\node[left][font = {\small}] at (0,3) {$\{n_{1},n_{2},n_{3}\}$};
\node[right][font = {\small}] at (3,3) {$\{n_{1},n_{2},n_{4}\}$};
\node[below][font = {\small}] at (0,0) {$\{n_{2},n_{3},n_{6}\}$};
\node[below][font = {\small}] at (3,0) {$\{n_{2},n_{4},n_{6}\}$};
\node[below left, inner sep=8] at (3,3) {$F_{n_2}$};
\node[above left, inner sep=8] at (3,3) {$F_{n_1}$};
\draw[line width=0.01mm] (0,3) -- (3,3);
\filldraw[fill=black,draw=black] (2.4,5) circle (1.5pt);
\filldraw[fill=black,draw=black] (.6,1.2) circle (1.5pt);
\draw[line width=0.05mm] (2.4,5) -- (.6,1.2);
\node[right][font = {\small}] at (2.4,5) {$p_{1}$};
\node[right][font = {\small}] at (.6,1.2) {$p_{2}$};
\draw [line width=0.05mm] (2.4,5) rectangle (.6,5.2);
\draw [line width=0.05mm] (.6,1.2) rectangle (.4,5);
\draw [line width=0.05mm] (.6,5) rectangle (.8,4.8);
\node[left,inner sep=1][font = {\small}] at (.4,2) {$y_{2}$};
\node[left,inner sep=1][font = {\small}] at (.4,4) {$y_{1}$};
\node[above, inner sep=1][font = {\small}] at (1.5,5.2) {$|1-x_{1}-x_{2}|$};
\end{tikzpicture}
    \caption{$L_1(F_{n_1} \rightarrow F_{n_2})$ and $T_O(p_1, p_2)$, where $O=(L_1,F_{n_1},F_{n_2})$}
    \label{cubeorientation1}
\end{figure}

\begin{case}
The dual-graph of the landscape is a path of two vertices.
\end{case}

Since we are constructing landscapes from $F_{n_1}$ to $F_{n_2}$, and $F_{n_1}$ and $F_{n_2}$ have only one edge in common, the only such landscape is that given in Figure \ref{cubeorientation1}, which we will refer to as $L_1$.

\begin{figure}[h]
  \centering
\begin{tikzpicture}[scale=1]
\draw [line width=0.05mm] (0,0) rectangle (3,6);
\draw [line width=0.05mm] (3,3) rectangle (6,6);
\filldraw[fill=black,draw=black] (0,0) circle (2pt);
\filldraw[fill=black,draw=black] (0,3) circle (2pt);
\filldraw[fill=black,draw=black] (3,3) circle (2pt);
\filldraw[fill=black,draw=black] (3,6) circle (2pt);
\filldraw[fill=black,draw=black] (0,6) circle (2pt);
\filldraw[fill=black,draw=black] (6,3) circle (2pt);
\filldraw[fill=black,draw=black] (6,6) circle (2pt);
\filldraw[fill=black,draw=black] (3,0) circle (2pt);
\draw[line width=0.05mm](0,3) -- (3,3);
\node[below right, inner sep=8] at (0,3) {$F_{n_2}$};
\node[below right, inner sep=8] at (0,6) {$F_{n_3}$};
\node[below right, inner sep=8] at (3,6) {$F_{n_1}$};
\node[above][font = {\small}] at (0,6) {$\{n_{3},n_{5},n_{6}\}$};
\node[above][font = {\small}] at (3,6) {$\{n_{1},n_{3},n_{5}\}$};
\node[above][font = {\small}] at (6,6) {$\{n_{1},n_{4},n_{5}\}$};
\node[below right][font = {\small}] at (6,3) {$\{n_{1},n_{2},n_{4}\}$};
\node[below right, inner sep=1][font = {\small}] at (3,3) {$\{n_{1},n_{2},n_{3}\}$};
\node[below][font = {\small}] at (3,0) {$\{n_{1},n_{2},n_{4}\}$};
\node[below][font = {\small}] at (0,0) {$\{n_{2},n_{4},n_{6}\}$};
\node[below left][font = {\small}] at (0,3) {$\{n_{2},n_{3},n_{6}\}$};
\filldraw[fill=black,draw=black] (1,1.75) circle (2pt);
\filldraw[fill=black,draw=black] (4.7,5.25) circle (2pt);
\draw[line width = 0.05mm] (1,1.75) -- (4.7,5.25);
\node[above right][font = {\small}] at (4.7,5.25) {$p_{1}$};
\node[left][font = {\small}] at (1,1.75) {$p_{2}$};
\draw [line width=0.05mm] (1,1.75) rectangle (4.7,1.55);
\draw [line width=0.05mm] (4.7,5.25) rectangle (4.9,1.75);
\draw [line width=0.05mm] (4.7,1.75) rectangle (4.55,1.9);
\node[right][font = {\small}] at (4.9,4.25) {$y_{1}$};
\node[right][font = {\small}] at (4.9,2.2) {$1-x_{2}$};
\node[below][font = {\small}] at (2,1.55) {$y_{2}$};
\node[below][font = {\small}] at (3.8,1.55) {$x_{1}$};
\end{tikzpicture}
    \caption{$L_2(F_{n_1} \rightarrow F_{n_2})$ and $T_O(p_1, p_2)$, where $O = (L_2, F_{n_1}, F_{n_2})$}
    \label{cubeorientation2}
\end{figure}

\begin{figure}[h]
  \centering
\begin{tikzpicture}[scale=1]
\draw [line width=0.05mm] (3,0) rectangle (6,6);
\draw [line width=0.05mm] (3,3) rectangle (0,6);
\filldraw[fill=black,draw=black] (6,0) circle (2pt);
\filldraw[fill=black,draw=black] (0,3) circle (2pt);
\filldraw[fill=black,draw=black] (3,3) circle (2pt);
\filldraw[fill=black,draw=black] (3,6) circle (2pt);
\filldraw[fill=black,draw=black] (0,6) circle (2pt);
\filldraw[fill=black,draw=black] (6,3) circle (2pt);
\filldraw[fill=black,draw=black] (6,6) circle (2pt);
\filldraw[fill=black,draw=black] (3,0) circle (2pt);
\draw[line width=0.05mm](6,3) -- (3,3);
\node[above right, inner sep=8] at (3,0) {$F_{n_2}$};
\node[below right, inner sep=8] at (3,6) {$F_{n_4}$};
\node[below left, inner sep=8] at (3,6) {$F_{n_1}$};
\node[above][font = {\small}] at (6,6) {$\{n_{4},n_{5},n_{6}\}$};
\node[above][font = {\small}] at (0,6) {$\{n_{1},n_{3},n_{5}\}$};
\node[above][font = {\small}] at (3,6) {$\{n_{1},n_{4},n_{5}\}$};
\node[right][font = {\small}] at (6,3) {$\{n_{2},n_{4},n_{6}\}$};
\node[below left, inner sep=1][font = {\small}] at (3,3) {$\{n_{1},n_{2},n_{4}\}$};
\node[below][font = {\small}] at (3,0) {$\{n_{1},n_{2},n_{3}\}$};
\node[below][font = {\small}] at (6,0) {$\{n_{2},n_{3},n_{6}\}$};
\node[below][font = {\small}] at (0,3) {$\{n_{1},n_{2},n_{3}\}$};
\filldraw[fill=black,draw=black] (5.2,1.75) circle (2pt);
\filldraw[fill=black,draw=black] (1.2,5.25) circle (2pt);
\draw[line width = 0.05mm] (5.2,1.75) -- (1.2,5.25);
\node[above][font = {\small}] at (1.2,5.25) {$p_{1}$};
\node[right][font = {\small}] at (5.2,1.75) {$p_{2}$};
\draw [line width=0.05mm] (5.2,1.75) rectangle (1.2,1.55);
\draw [line width=0.05mm] (1.2,5.25) rectangle (1,1.75);
\draw [line width=0.05mm] (1.2,1.75) rectangle (1.4,1.95);
\node[left, inner sep=1][font = {\small}] at (1,4.25) {$y_{1}$};
\node[left, inner sep=1][font = {\small}] at (1,2.4) {$x_{2}$};
\node[below][font = {\small}] at (4,1.55) {$y_{2}$};
\node[below][font = {\small}] at (2.2,1.55) {$1-x_{1}$};
\end{tikzpicture}
    \caption{$L_3(F_{n_1} \rightarrow F_{n_2})$ and $T_O(p_1, p_2)$, where $O = (L_3,F_{n_1}, F_{n_2})$}
    \label{cubeorientation3}
\end{figure}

\begin{case}
    The dual-graph of the landscape is a path of three vertices.
\end{case}

Since we are constructing landscapes of the form $L_i(F_{n_1} \rightarrow F_{n_2})$ and any two adjacent faces of $\mathcal P_6$ share two neighbors, the first face must be $F_{n_1}$, the third face $F_{n_2}$, and the second face is either $F_{n_3}$ or $F_{n_4}$ (with $F_{n_6}$ and $F_{n_5}$ being the opposite faces of $F_{n_1}$ and $F_{n_2}$ respectively).  Since two faces of $\mathcal P_6$ share a unique edge, the two landscapes in Figure \ref{cubeorientation2} and Figure \ref{cubeorientation3}  are the only such landscapes. 
\end{proof}

Having identified a collection of landscapes of $\mathcal P_6$, we have the following corollary.  Note that due to the symmetries of $\mathcal P_6$ and the presence of a formula for switching from one shared-face to another we can assume without loss of generality that the representations of the points $p_1$ and $p_2$ are such that $p_1=(F_{n_1}, F_{n_2}, x_1, y_1)$ and $p_2=(F_{n_2}, F_{n_1}, x_2, y_2)$.  Later when we prove that given a pair of adjacent faces these are the only three valid landscapes between said adjacent faces, we will see that the bound in this corollary actually provides equality.    

\begin{corollary}\label{cube_adjacent_surface}
Given two points $p_1 \in F_{n_1} \setminus F_{n_2}$ and $p_2 \in F_{n_2} \setminus F_{n_1}$ on two adjacent faces of the cube, with $p_1=(F_{n_1},F_{n_2},x_1,y_1)$ and $p_2=(F_{n_2},F_{n_1},x_2,y_2)$,
\[d_{\mathcal P_6}(p_1,p_2) \leq d_{\mathcal P_6}^A(p_1,p_2),\text{ where}\] 
\[d_{\mathcal P_6}^A(p_1,p_2)=\min\left\{\abs{T_{L_i}(p_1,p_2)}: i \in \mathbb N \text{ with } 1 \leq i \leq 3\right\},\]
and for each $i \in \mathbb N$ with $1 \leq i \leq 3$, the trail length $\abs{T_{L_i}(p_1,p_2)}$ is given in the table below:

\begingroup
\begin{center}
\begin{tabular}{|c|c|}
    \hline
    Landscape & Trail Length Formula\\
    \hline
    \rule{0pt}{12pt}$L_1$ &\rule{-4pt}{12pt} $\sqrt{(x_1+x_2-1)^2+(y_1+y_2)^2}$\\[5pt]
    \hline
    \rule{0pt}{12pt}$L_2$ &\rule{-4pt}{12pt} $\sqrt{(x_1+y_2)^2+(y_1-x_2+1)^2}$\\[5pt]
    \hline
    \rule{0pt}{12pt}$L_3$ &\rule{-4pt}{12pt} $\sqrt{(x_1-y_2-1)^2+(y_1+x_2)^2}$\\[5pt]
    \hline
\end{tabular}
\end{center}
\endgroup
\end{corollary}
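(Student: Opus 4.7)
The plan is to proceed in direct analogy with the trail-length computations in the proof of Theorem \ref{tet_valid}, leveraging the three landscapes $L_1, L_2, L_3$ already constructed in Theorem \ref{Cube_Adjacent_Face}. For each $i \in \{1,2,3\}$ I aim to determine the coordinates of $p_2$ in the orientation $(L_i, F_{n_1}, F_{n_2})$ and then apply the Pythagorean distance formula from Section 3 to the pair $p_1(L_i, F_{n_1}, F_{n_2}) = (x_1, y_1)$ and $p_2(L_i, F_{n_1}, F_{n_2})$. The inequality $d_{\mathcal P_6}(p_1,p_2) \leq d_{\mathcal P_6}^A(p_1,p_2)$ will then be immediate from the definition of $d_{\mathcal P_6}$ as the minimum of $\big{\vert}T_L(p_1,p_2)\big{\vert}$ taken over all landscapes $L$ of $\mathcal P_6$, since $d_{\mathcal P_6}^A$ is the minimum over the smaller collection of just three such landscapes.

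The key step is locating $F_{n_2}$ inside each of the three orientations. I would first fix $F_{n_2}$'s standard position with shared-face $F_{n_1}$: by the definition of the coordinate system together with the counter-clockwise order of vertices in $F_{n_2}$, this places $\{n_1,n_2,n_4\}$ at $(0,0)$, $\{n_1,n_2,n_3\}$ at $(1,0)$, $\{n_2,n_3,n_6\}$ at $(1,1)$, and $\{n_2,n_4,n_6\}$ at $(0,1)$. Then, for each $L_i$, I would unfold from $F_{n_1}$ (occupying $[0,1] \times [0,1]$) to $F_{n_2}$ along the sequence of faces comprising $L_i$, using the convention that each unfolding preserves counter-clockwise order on each face. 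For $L_1$ this is a direct glue along the bottom edge of $F_{n_1}$, which acts on $F_{n_2}$ as a rotation by $180$ degrees about the midpoint of the shared edge and produces $p_2(L_1, F_{n_1}, F_{n_2}) = (1-x_2, -y_2)$. For $L_2$, one unfolds first to the left into $F_{n_3}$ and then downward into $F_{n_2}$; the composite acts as a $90$ degree counter-clockwise rotation plus a translation, sending $p_2$ to $(-y_2, x_2 - 1)$. For $L_3$, the symmetric unfolding through $F_{n_4}$ to the right and then down gives a $90$ degree clockwise rotation plus translation and places $p_2$ at $(y_2 + 1, -x_2)$.

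With these three positions in hand, the remaining work is routine algebra: plugging into $\sqrt{\big{(}x_1 - p_2(L_i,F_{n_1},F_{n_2})_x\big{)}^2 + \big{(}y_1 - p_2(L_i,F_{n_1},F_{n_2})_y\big{)}^2}$ yields the three entries in the table. The main obstacle I anticipate is the bookkeeping during the unfoldings for $L_2$ and $L_3$: one must match the counter-clockwise cyclic order of $F_{n_2}$'s vertices in its standard position against the counter-clockwise order inherited from its position at the end of the unfolding chain, and it is easy to mistake this transformation for a pure rotation when in fact it also swaps the roles of the $x$- and $y$-coordinates of $p_2$. Once that matching is done carefully, the distance formula and the invocation of the definition of $d_{\mathcal P_6}$ finish the proof.
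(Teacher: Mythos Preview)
Your proposal is correct and follows essentially the same approach as the paper: for each $L_i$ you determine the position of $p_2$ in the orientation $(L_i,F_{n_1},F_{n_2})$ via the appropriate rigid motion of $F_{n_2}$ from its standard position, obtaining exactly the same coordinates $(1-x_2,-y_2)$, $(-y_2,x_2-1)$, and $(y_2+1,-x_2)$ that the paper derives, and then apply the Pythagorean distance formula. Your explicit remark that the inequality $d_{\mathcal P_6}\le d_{\mathcal P_6}^A$ follows because $d_{\mathcal P_6}^A$ minimizes over a subcollection of landscapes is a point the paper leaves implicit, but otherwise the arguments are the same.
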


\begin{proof}
Suppose $\{n_1,n_2,n_3,n_4,n_5,n_6\}$ = $\{1,2,3,4,5,6\}$ such that in nets of our copy of $\mathcal P_6$, $F_{n_1}$ and $F_{n_6}$ are opposite faces, and $F_{n_2}$, $F_{n_3}$, $F_{n_4}$, and $F_{n_5}$ occur in the same counter-clockwise order about $F_{n_1}$ as the faces $F_2$, $F_3$, $F_4$, and $F_5$ occur about the face $F_1$.  Since $p_2$'s home-face is $F_{n_2}$ and $p_2$'s shared-face is $F_{n_1}$, in the orientation $(L_1,F_{n_1},F_{n_2})$ the point $p_2$ has been rotated $180$ degrees about the origin and shifted $1$ unit to the right from $p_2$'s standard position with respect to the representation $(F_{n_2},F_{n_1},x_2,y_2)$. Thus, $p_2(L_1,F_{n_1},F_{n_2})$ $=$ ($1-x_2$, $-y_2$) and we get that 
\[\abs{T_{L_1}(p_1, p_2)}=\sqrt{(x_1+x_2-1)^2+(y_1+y_2)^2}.\]
Since $p_2$'s home-face is $F_{n_2}$ and shared-face is $F_{n_1}$, in the orientation $(L_2,F_{n_1},F_{n_2})$ the point $p_2$ has been rotated $90$ degrees counter-clockwise about the origin and shifted $1$ unit down from $p_2$'s standard position with respect to the representation $(F_{n_2},F_{n_1},x_2,y_2)$. Thus, $p_2(L_2,F_{n_1},F_{n_2}) = (-y_2,x_2-1)$ and we get that 
\[\abs{T_{L_2}(p_1, p_2)} = \sqrt{(x_1+y_2)^2+(y_1-x_2+1)^2}.\]
Similarly, in the orientation $(L_3, F_{n_1}, F_{n_2})$ the point $p_2$ has been rotated $90$ degrees clock-wise about the origin and shifted $1$ unit to the right from $p_2$'s standard position with respect to the representation $(F_{n_2},F_{n_1},x_2,y_2)$. Thus, $p_2(L_3,F_{n_1},F_{n_2})$ = $(1+y_2,-x_2)$ and we get that 
\[\abs{T_{L_3}(p_1, p_2)} = \sqrt{(x_1-y_2-1)^2+(y_1+x_2)^2}.\]
\end{proof}

We now address the case of landscapes between opposite faces.  As in the case of adjacent faces, Figure \ref{cubeorientation4-7}, Figure \ref{cubeorientation8-11}, and Figure \ref{cubeorientation12-15} will each be referenced for both the proof of Theorem \ref{Cube_Opposite_Face} and the proof of Corollary \ref{cube_opposite_surface}, with the structure of each landscape being of particular interest in Theorem \ref{Cube_Opposite_Face} and the length of the trail contained therein being of interest in Corollary \ref{cube_opposite_surface}.

\begin{theorem}\label{Cube_Opposite_Face}
Let $F_{n_1}$ and $F_{n_6}$ be opposite faces of $\mathcal P_6$.  Then there are at least 12 landscapes of the form $L_m(F_{n_1} \rightarrow F_{n_6})$. 
\end{theorem}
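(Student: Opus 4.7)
The plan is to mirror the case analysis used in the proof of Theorem \ref{tet_valid} and adapt it to the cube. First, I would fix the labeling convention from Lemma \ref{Cube_Shared_Face}, so that $F_{n_1}$ and $F_{n_6}$ are opposite and $F_{n_2}, F_{n_3}, F_{n_4}, F_{n_5}$ occur around $F_{n_1}$ in the standard counter-clockwise order. Since $F_{n_1}$ and $F_{n_6}$ share no edge of $\mathcal P_6$, every landscape of the form $L_m(F_{n_1} \to F_{n_6})$ has a dual-graph path on at least three vertices. I will construct 4 landscapes whose dual-graph paths have exactly three vertices and 8 landscapes whose dual-graph paths have exactly four vertices, labeling them $L_4, \ldots, L_{15}$ and yielding the required count of 12.

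For the three-vertex case, a landscape has the form $F_{n_1}, F_a, F_{n_6}$ where $F_a$ is adjacent to both $F_{n_1}$ and $F_{n_6}$. Since $F_{n_6}$ is the unique face opposite $F_{n_1}$, the faces adjacent to $F_{n_1}$ are precisely the ring $\{F_{n_2}, F_{n_3}, F_{n_4}, F_{n_5}\}$, and the same ring gives the faces adjacent to $F_{n_6}$. Each choice of $F_a$ in this ring produces a valid landscape, for a total of 4.

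For the four-vertex case, a landscape has the form $F_{n_1}, F_a, F_b, F_{n_6}$ with $F_a \neq F_b$, where both $F_a$ and $F_b$ lie in the ring (by the same argument applied to each endpoint) and $F_a$ and $F_b$ are adjacent on $\mathcal P_6$. The labeling convention forces the ring to form the 4-cycle $F_{n_2} - F_{n_3} - F_{n_4} - F_{n_5} - F_{n_2}$, because $\{F_{n_2}, F_{n_4}\}$ and $\{F_{n_3}, F_{n_5}\}$ are the other two pairs of opposite faces of $\mathcal P_6$ and hence non-adjacent. Each of the 4 edges of this cycle yields 2 ordered choices for $(F_a, F_b)$, producing 8 landscapes. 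Combining with Case 1 yields the 12 distinct landscapes claimed.

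The main subtlety is the non-adjacency of the diagonal pairs $\{F_{n_2}, F_{n_4}\}$ and $\{F_{n_3}, F_{n_5}\}$, which is exactly what guarantees the ring is a 4-cycle rather than a complete graph on four vertices; this follows from the labeling convention. Planar realizability of each listed sequence is immediate, since any such sequence of at most four pairwise-adjacent faces of $\mathcal P_6$ unfolds without overlap into a connected planar strip and therefore appears as a subgraph of the dual graph of a suitable net. The corresponding coordinate formulae for $p_2(L_m, F_{n_1}, F_{n_6})$ are deferred to Corollary \ref{cube_opposite_surface} and are not needed to establish the lower bound here.
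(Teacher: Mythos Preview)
Your approach is essentially the paper's: both split into the three-vertex and four-vertex cases, obtain $4$ and $8$ landscapes respectively, and defer the coordinate formulae to Corollary~\ref{cube_opposite_surface}. However, you have the opposite pairs in the ring wrong. Under the paper's labeling (see Figure~\ref{cubenet} and the vertex set in Section~4), the non-adjacent pairs among the side faces are $\{F_{n_2},F_{n_5}\}$ and $\{F_{n_3},F_{n_4}\}$, not $\{F_{n_2},F_{n_4}\}$ and $\{F_{n_3},F_{n_5}\}$ as you claim; the paper states this explicitly in the proof of Theorem~\ref{Cube_Adjacent_Face} (``$F_{n_5}$ is opposite of $F_{n_2}$'') and again in the present proof. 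Consequently the $4$-cycle of side faces is $F_{n_2}\text{--}F_{n_3}\text{--}F_{n_5}\text{--}F_{n_4}\text{--}F_{n_2}$, not $F_{n_2}\text{--}F_{n_3}\text{--}F_{n_4}\text{--}F_{n_5}\text{--}F_{n_2}$.

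This slip does not affect the count of $12$, so the theorem itself survives your argument. But since you announce that you will label the resulting landscapes $L_4,\ldots,L_{15}$ and hand them to Corollary~\ref{cube_opposite_surface}, the mislabeling would propagate into incorrect face sequences for $L_8,\ldots,L_{15}$ and hence incorrect trail-length formulae there. You flagged exactly this adjacency structure as ``the main subtlety,'' so it is worth getting right: the convention ``$F_{n_2},F_{n_3},F_{n_4},F_{n_5}$ occur about $F_{n_1}$ in the same counter-clockwise order as $F_2,F_3,F_4,F_5$ occur about $F_1$'' does \emph{not} assert that $(n_2,n_3,n_4,n_5)$ is the cyclic order around $F_{n_1}$; it asserts compatibility with the fixed net of Figure~\ref{cubenet}, where the cyclic order is $(n_2,n_4,n_5,n_3)$.
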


\setcounter{case}{0}

\begin{proof}

Suppose $\{n_1,n_2,n_3,n_4,n_5,n_6\}$ = $\{1,2,3,4,5,6\}$ such that in nets of our copy of $\mathcal P_6$, $F_{n_1}$ and $F_{n_6}$ are opposite faces, and $F_{n_2}$, $F_{n_3}$, $F_{n_4}$, and $F_{n_5}$ occur in the same counter-clockwise order about $F_{n_1}$ as the faces $F_2$, $F_3$, $F_4$, and $F_5$ occur about the face $F_1$.  We will be constructing landscapes of the form $L_m(F_{n_1} \rightarrow F_{n_6})$, where $m \in \mathbb{N}$ such that $4\leq m\leq 15$. We will let the set $\{n_i,n_j,n_k,n_{\ell}\} = \{n_2,n_3,n_4,n_5\}$ such that $i+j=7$ and $k+\ell=7$.

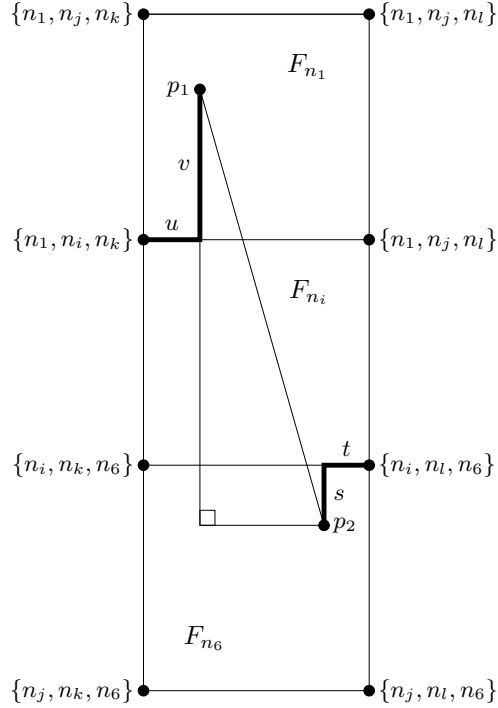
\begin{figure}[h]
  \centering
\begin{tikzpicture}[scale=1]
\draw [line width=0.05mm] (0,0) rectangle (3,9);
\filldraw[fill=black,draw=black] (0,0) circle (2pt);
\filldraw[fill=black,draw=black] (0,3) circle (2pt);
\filldraw[fill=black,draw=black] (0,6) circle (2pt);
\filldraw[fill=black,draw=black] (0,9) circle (2pt);
\filldraw[fill=black,draw=black] (3,0) circle (2pt);
\filldraw[fill=black,draw=black] (3,6) circle (2pt);
\filldraw[fill=black,draw=black] (3,3) circle (2pt);
\filldraw[fill=black,draw=black] (3,9) circle (2pt);
\draw[line width = 0.05mm] (0,3) -- (3,3);
\draw[line width = 0.05mm] (0,6) -- (3,6);
\draw[line width = 0.05mm] (0,9) -- (3,9);
\node[left][font = {\small}] at (0,0) {$\{n_{j},n_{k},n_{6}\}$};
\node[left][font = {\small}] at (0,3) {$\{n_{i},n_{k},n_{6}\}$};
\node[left][font = {\small}] at (0,6) {$\{n_{1},n_{i},n_{k}\}$};
\node[left][font = {\small}] at (0,9) {$\{n_{1},n_{j},n_{k}\}$};
\node[right][font = {\small}] at (3,0) {$\{n_{j},n_{\ell},n_{6}\}$};
\node[right][font = {\small}] at (3,3) {$\{n_{i},n_{\ell},n_{6}\}$};
\node[right][font = {\small}] at (3,6) {$\{n_{1},n_{i},n_{\ell}\}$};
\node[right][font = {\small}] at (3,9) {$\{n_{1},n_{j},n_{\ell}\}$};
\node[below left, inner sep=15] at (3,9) {$F_{n_1}$};
\node[below left, inner sep=15] at (3,6) {$F_{n_i}$};
\node[above right, inner sep=15] at (0,0) {$F_{n_6}$};
\filldraw[fill=black,draw=black] (.75,8) circle (2pt);
\filldraw[fill=black,draw=black] (2.4,2.2) circle (2pt);
\node[left][font = {\small}] at (.75,8) {$p_{1}$};
\node[right][font = {\small}] at (2.4,2.2) {$p_{2}$};
\draw[line width = 0.05mm] (.75,8) -- (2.4,2.2);
\draw[line width = 0.65mm] (.75,8) -- (.75,6) -- (0,6);
\draw[line width = 0.65mm] (2.4,2.2) -- (2.4,3) -- (3,3);
\draw[line width = 0.05mm] (.75,8) -- (.75,2.2) -- (2.4,2.2);
\draw[line width = 0.05mm] (.75,2.2) rectangle (.95,2.4);
\node[left][font = {\small}] at (.75,7) {$v$};
\node[above][font = {\small}] at (.375,6) {$u$};
\node[above][font = {\small}] at (2.7,3) {$s$};
\node[right][font = {\small}] at (2.4,2.6) {$t$};
\end{tikzpicture}
    \caption{$L_m(F_{n_1} \rightarrow F_{n_6})$ and $T_O(p_1, p_2)$, where $O= (L_m, F_{n_1}, F_{n_i})$ with $m \in \{4,5,6,7\}$}
    \label{cubeorientation4-7}
\end{figure}

Since the dual graph is a path of three vertices, $F_{n_1}$ and $F_{n_6}$ are opposite faces, and there are four faces adjacent to $F_{n_1}$, $F_{n_1}$ must be the first face, $F_{n_6}$ must be the third face, and the second face must come from the following set: $\{F_{n_2}, F_{n_3}, F_{n_4}, F_{n_5}\}$. This provides us with what we define to be landscapes of the form $L_m(F_{n_1} \rightarrow F_{n_6})$, where $m \in \{4,5,6,7\}$.

\begin{itemize}
    \item For $L_4$, we will let $n_i=n_2$. In turn we get that $n_j=n_5$, and without loss of generality $n_k=n_3$, and $n_{\ell}=n_4$.
    \item For $L_5$, we let $n_i=n_3$ in the orientation $(L_5,F_{n_1},F_{n_3})$, and we get that $n_j=n_4$, $n_k=n_5$, and $n_{\ell}=n_2$. 
    \item For $L_6$, we let $n_i=n_4$, in the orientation $(L_6,F_{n_1},F_{n_4})$, and we get that $n_j=n_3$, $n_k=n_2$, and $n_{\ell}=n_5$. 
    \item For $L_7$, we let $n_i=n_5$, in the orientation $(L_7,F_{n_1},F_{n_5})$, and we get that $n_j=n_2$, $n_k=n_4$, and $n_{\ell}=n_3$. 
\end{itemize}

\begin{figure}[h]
  \centering
\begin{tikzpicture}[scale=1]
\draw [line width=0.05mm] (0,0) rectangle (3,6);
\draw [line width=0.05mm] (3,3) rectangle (6,9);
\draw [line width=0.05mm] (3,6) rectangle (6,6);
\draw [line width=0.05mm] (0,3) rectangle (3,3);
\filldraw[fill=black,draw=black] (0,3) circle (2pt);
\filldraw[fill=black,draw=black] (0,0) circle (2pt);
\filldraw[fill=black,draw=black] (0,6) circle (2pt);
\filldraw[fill=black,draw=black] (3,3) circle (2pt);
\filldraw[fill=black,draw=black] (3,6) circle (2pt);
\filldraw[fill=black,draw=black] (6,3) circle (2pt);
\filldraw[fill=black,draw=black] (6,6) circle (2pt);
\filldraw[fill=black,draw=black] (3,9) circle (2pt);
\filldraw[fill=black,draw=black] (6,9) circle (2pt);
\filldraw[fill=black,draw=black] (3,0) circle (2pt);

\node[left][font = {\small}] at (0,0) {$\{n_{j},n_{\ell},n_{6}\}$};
\node[left][font = {\small}] at (0,3) {$\{n_{j},n_{k},n_{6}\}$};
\node[left][font = {\small}] at (0,6) {$\{n_{1},n_{j},n_{k}\}$};
\node[above left][font = {\small}] at (3,6) {$\{n_{1},n_{i},n_{k}\}$};
\node[left][font = {\small}] at (3,9) {$\{n_{1},n_{j},n_{k}\}$};
\node[right][font = {\small}] at (6,9) {$\{n_{1},n_{j},n_{\ell}\}$};
\node[right][font = {\small}] at (6,6) {$\{n_{1},n_{i},n_{\ell}\}$};
\node[right][font = {\small}] at (6,3) {$\{n_{i},n_{\ell},n_{6}\}$};
\node[below right][font = {\small}] at (3,3) {$\{n_{i},n_{k},n_{6}\}$};
\node[right][font = {\small}] at (3,0) {$\{n_{i},n_{\ell},n_{6}\}$};
\node[below right,inner sep=15] at (0,6) {$F_{n_k}$};
\node[below right,inner sep=15] at (0,3) {$F_{n_6}$};
\node[above left,inner sep=15] at (6,3) {$F_{n_i}$};
\node[below right,inner sep=15] at (3,9) {$F_{n_1}$};
\filldraw[fill=black,draw=black] (1,1.7) circle (2pt);
\filldraw[fill=black,draw=black] (4.8,7.4) circle (2pt);
\node[above right][font = {\small}] at (4.8,7.4) {$p_{1}$};
\node[below left][font = {\small}] at (1,1.7) {$p_{2}$};
\draw [line width=0.05mm] (1,1.7) -- (4.8,7.4);
\draw [line width=0.05mm] (4.8,7.4) -- (4.8,1.7) -- (1,1.7);
\draw [line width=0.05mm] (4.8,1.7) rectangle (4.6,1.9);
\draw[line width = 0.65mm] (4.8,7.4) -- (4.8,6) -- (3,6);
\draw[line width = 0.65mm] (1,1.7) -- (3,1.7) -- (3,0);
\node[right][font = {\small}] at (3,.85) {$s$};
\node[below][font = {\small}] at (2,1.7) {$t$};
\node[below][font = {\small}] at (3.9,6) {$u$};
\node[right][font = {\small}] at (4.8,6.7) {$v$};

\end{tikzpicture}
    \caption{$L_m(F_{n_1} \rightarrow F_{n_6})$ and $T_O(p_1, p_2)$, where $O= (L_m, F_{n_1}, F_{n_i})$ with $m \in \{8,9,10,11\}$}
    \label{cubeorientation8-11}
\end{figure}
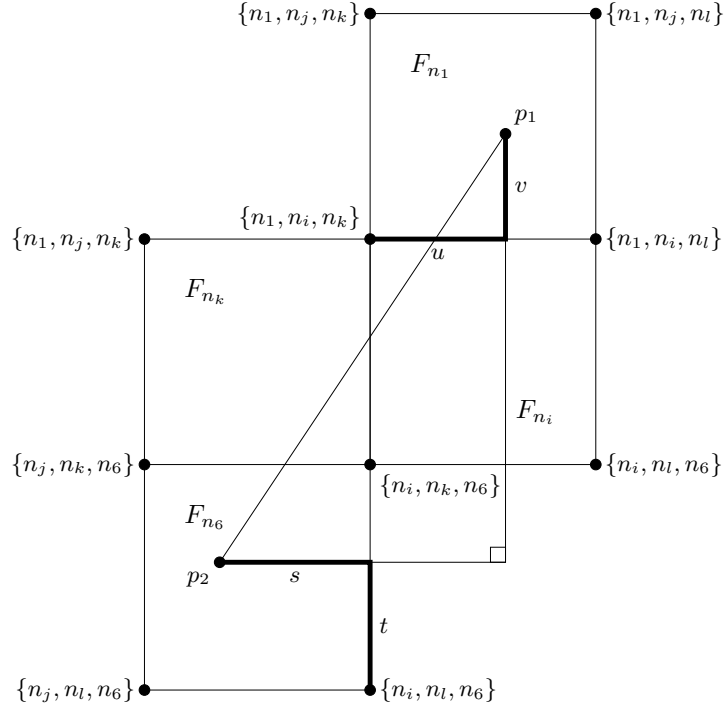

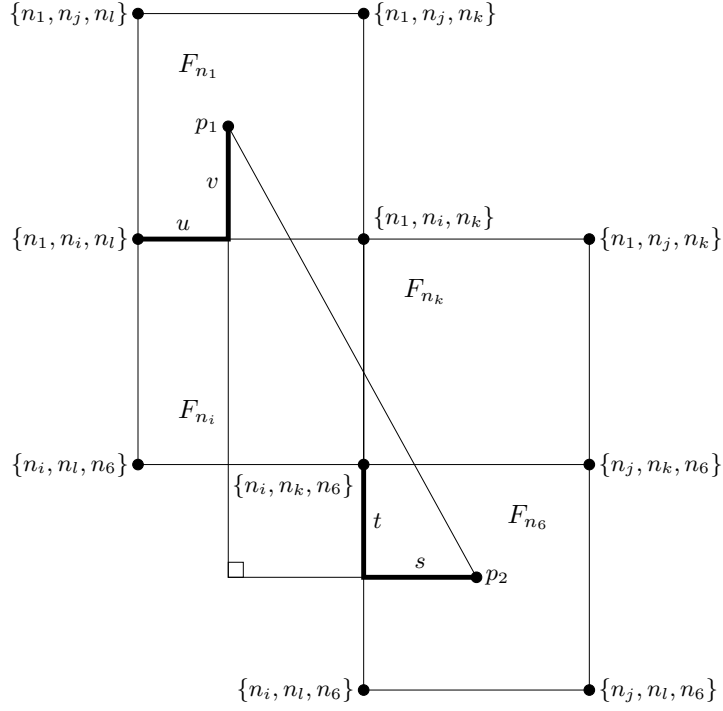
\begin{figure}[h]
  \centering
\begin{tikzpicture}[scale=1]
\draw [line width=0.05mm] (6,0) rectangle (3,6);
\draw [line width=0.05mm] (3,3) rectangle (0,9);
\draw [line width=0.05mm] (3,3) -- (6,3);
\draw [line width=0.05mm] (0,6) -- (3,6);
\draw [line width=0.05mm] (3,3) -- (3,6);
\filldraw[fill=black,draw=black] (0,3) circle (2pt);
\filldraw[fill=black,draw=black] (6,0) circle (2pt);
\filldraw[fill=black,draw=black] (0,6) circle (2pt);
\filldraw[fill=black,draw=black] (3,3) circle (2pt);
\filldraw[fill=black,draw=black] (3,6) circle (2pt);
\filldraw[fill=black,draw=black] (6,3) circle (2pt);
\filldraw[fill=black,draw=black] (6,6) circle (2pt);
\filldraw[fill=black,draw=black] (3,9) circle (2pt);
\filldraw[fill=black,draw=black] (0,9) circle (2pt);
\filldraw[fill=black,draw=black] (3,0) circle (2pt);
\node[right][font = {\small}] at (6,0) {$\{n_{j},n_{k},n_{6}\}$};
\node[right][font = {\small}] at (6,3) {$\{n_{j},n_{\ell},n_{6}\}$};
\node[right][font = {\small}] at (6,6) {$\{n_{1},n_{j},n_{\ell}\}$};
\node[above right][font = {\small}] at (3,6) {$\{n_{1},n_{i},n_{\ell}\}$};
\node[right][font = {\small}] at (3,9) {$\{n_{1},n_{j},n_{\ell}\}$};
\node[left][font = {\small}] at (0,9) {$\{n_{1},n_{j},n_{k}\}$};
\node[left][font = {\small}] at (0,6) {$\{n_{1},n_{i},n_{k}\}$};
\node[left][font = {\small}] at (0,3) {$\{n_{i},n_{k},n_{6}\}$};
\node[below left][font = {\small}] at (3,3) {$\{n_{i},n_{\ell},n_{6}\}$};
\node[left][font = {\small}] at (3,0) {$\{n_{i},n_{k},n_{6}\}$};
\node[below right,inner sep=15] at (3,6) {$F_{n_{\ell}}$};
\node[below left,inner sep=15] at (6,3) {$F_{n_6}$};
\node[above right,inner sep=15] at (0,3) {$F_{n_i}$};
\node[below right,inner sep=15] at (0,9) {$F_{n_1}$};
\filldraw[fill=black,draw=black] (1.2,7.5) circle (2pt);
\filldraw[fill=black,draw=black] (4.5,1.5) circle (2pt);
\node[left][font = {\small}] at (1.2,7.5) {$p_{1}$};
\node[right][font = {\small}] at (4.5,1.5) {$p_{2}$};
\draw [line width=0.05mm] (1.2,7.5) -- (1.2,1.5) -- (4.5,1.5);
\draw [line width=0.05mm] (1.2,7.5) -- (4.5,1.5);
\draw [line width=0.05mm] (1.2,1.5) rectangle (1.4,1.7);
\draw[line width = 0.65mm] (1.2,7.5) -- (1.2,6) -- (0,6);
\draw[line width = 0.65mm] (4.5,1.5) -- (3,1.5) -- (3,3);
\node[right][font = {\small}] at (3,2.25) {$s$};
\node[above][font = {\small}] at (3.75,1.5) {$t$};
\node[above][font = {\small}] at (.6,6) {$u$};
\node[left][font = {\small}] at (1.2,6.75) {$v$};
\end{tikzpicture}
    \caption{$L_m(F_{n_1} \rightarrow F_{n_6})$ and $T_O(p_1, p_2)$, where $O= (L_m, F_{n_1}, F_{n_i})$ with $m \in \{12,13,14,15\}$}
    \label{cubeorientation12-15}
\end{figure}

\begin{case}
The dual graph is a path of four vertices
\end{case}

Since we are constructing landscapes of the form $L_m(F_{n_1}\rightarrow F_{n_6})$ and $\mathcal{P}_6$ has six faces, the first face must be $F_{n_1}$, the fourth face must be $F_{n_6}$, and the second and third faces must be distinct elements of the set $\{F_{n_2},F_{n_3},F_{n_4},F_{n_5}\}$. Furthermore, we  cannot have pairs of opposite faces from the aforementioned set in the same landscape. Thus, there are eight such landscapes since we cannot have $F_{n_2}$ and $F_{n_5}$ or $F_{n_3}$ and $F_{n_4}$ in the same landscape.
These landscapes will produce two general structures, depicted in Figures \ref{cubeorientation8-11} and \ref{cubeorientation12-15}.

We will first construct landscapes having the general structure depicted in Figure \ref{cubeorientation8-11}, namely $L_m(F_{n_1} \rightarrow F_{n_6})$, where $m \in \{8,9,10,11\}$.  
\begin{itemize}
    \item For $L_8$, we will respectively let the second and third faces, $F_{n_i}$ and $F_{n_k}$, be $F_{n_2}$ and $F_{n_3}$. In turn we get that $n_j=n_5$ and $n_{\ell}=n_4$.  
    \item For $L_9$, we will respectively let the second and third faces, $F_{n_i}$ and $F_{n_k}$, be $F_{n_3}$ and $F_{n_5}$. In turn we get that $n_j=n_4$ and $n_{\ell}=n_2$
    \item For $L_{10}$, we will respectively let the second and third faces, $F_{n_i}$ and $F_{n_k}$, be $F_{n_5}$ and $F_{n_4}$. In turn we get that $n_j=n_2$ and $n_{\ell}=n_3$.
    \item For $L_{11}$, we will respectively let the second and third faces, $F_{n_i}$ and $F_{n_k}$, be $F_{n_4}$ and $F_{n_2}$. In turn we get that $n_j=n_3$ and $n_{\ell}=n_5$.
\end{itemize}

Finally, we will construct landscapes having the general structure depicted in Figure \ref{cubeorientation12-15}, namely $L_m(F_{n_1} \rightarrow F_{n_6})$, where $m \in \{12,13,14,15\}$.

\begin{itemize}
    \item For $L_{12}$, we will respectively let the second and third faces, $F_{n_i}$ and $F_{n_{\ell}}$, be $F_{n_2}$ and $F_{n_4}$. In turn we get that $n_j=n_5$ and $n_k=n_3$.
    \item For $L_{13}$, we will respectively let the second and third faces, $F_{n_i}$ and $F_{n_{\ell}}$, be $F_{n_3}$ and $F_{n_2}$. In turn we get that $n_j=n_4$ and $n_k=n_5$.
    \item For $L_{14}$, we will respectively let the second and third faces, $F_{n_i}$ and $F_{n_{\ell}}$, be $F_{n_5}$ and $F_{n_3}$. In turn we get that $n_j=n_2$ and $n_k=n_4$.
    \item For $L_{15}$, we will respectively let the second and third faces, $F_{n_i}$ and $F_{n_{\ell}}$, be $F_{n_4}$ and $F_{n_5}$. In turn we get that $n_j=n_3$ and $n_k=n_2$.
\end{itemize}
\end{proof}

Having identified a collection of landscapes of $\mathcal P_6$, we have the following corollary.  Note that due to the symmetries of $\mathcal P_6$ and the presence of a formula for switching from one shared-face to another we can assume without loss of generality that the representations of the points $p_1$ and $p_2$ are such that $p_1=(F_{n_1}, F_{n_2}, x_1, y_1)$ and $p_2=(F_{n_6}, F_{n_2}, x_2, y_2)$.  Later when we prove that given a pair of opposite faces these are the only twelve valid landscapes between said opposite faces, we will see that the bound in this corollary actually provides equality.  As a preliminary notion for the statement of Corollary \ref{cube_opposite_surface} and the proof of Lemma \ref{boundary}, we use the topological concepts \emph{interior of a set} and \emph{boundary of a set} in the standard way.  That is, we define the interior of a landscape $L_i$, denoted $\inte (L_i)$,  to be the topological interior of said set of points given by the standard topology on $\mathbb R^2$.  Furthermore, we define the
boundary of a face $F_n$ of some convex unit polyhedron $\mathcal{P}_n $, denoted $\partial{F}_n$, to be the set of points contained in the edges of $F_n$ or equivalently the set of points contained in $F_n \setminus \inte(F_n)$.  

\begin{corollary}\label{cube_opposite_surface}
Given two points $p_1 \in \inte(F_{n_1})$ and $p_2 \in \inte(F_{n_6})$ on two opposite faces of the cube, with $p_1=(F_{n_1},F_{n_2},x_1,y_1)$ and $p_2=(F_{n_6},F_{n_2},x_2,y_2)$,
\[d_{\mathcal P_6}(p_1,p_2) \leq d_{\mathcal P_6}^O(p_1,p_2),\text{ where}\] 
\[d_{\mathcal P_6}^O(p_1,p_2)=\min\left\{\abs{T_{L_i}(p_1,p_2)}: i \in \mathbb N \text{ with } 4 \leq i \leq 15\right\},\]
and for each $i \in \mathbb N$ with $4 \leq i \leq 15$, the trail length $\abs{T_{L_i}(p_1,p_2)}$ is given in the table below:

\begin{center}
\begin{tabular}{ |c|c||c|c| }
    \hline
    Landscape & Trail Length Formula & Landscape & Trail Length Formula \\
    \hline
    \rule{0pt}{12pt}$L_4$ &\rule{-4pt}{12pt} $\sqrt{(x_1+x_2-1)^2+(y_1+y_2+1)^2}$ & \rule{0pt}{12pt}$L_{10}$ &\rule{-4pt}{12pt} $\sqrt{(x_1+y_2-2)^2+(y_1-x_2-2)^2}$\\[1pt]
    \hline
    \rule{0pt}{12pt}$L_5$ &\rule{-4pt}{12pt} $\sqrt{(y_1-y_2)^2+(x_1-x_2+2)^2}$ & \rule{0pt}{12pt}$L_{11}$ &\rule{-4pt}{12pt} $\sqrt{(y_1+x_2)^2+(x_1-y_2-2)^2}$\\[1pt]
    \hline
    \rule{0pt}{12pt}$L_6$ &\rule{-4pt}{12pt} $\sqrt{(y_1-y_2)^2+(x_1 -x_2-2)^2}$ & \rule{0pt}{12pt}$L_{12}$ &\rule{-4pt}{12pt} $\sqrt{(x_1-y_2-1)^2+(y_1+x_2+1)^2}$\\[1pt]
    \hline
    \rule{0pt}{12pt}$L_7$ &\rule{-4pt}{12pt} $\sqrt{( x_1+x_2-1)^2+(y_1+y_2-3)^2}$ & \rule{0pt}{12pt}$L_{13}$ &\rule{-4pt}{12pt} $\sqrt{(y_1-x_2+1)^2+(x_1+y_2+1)^2}$\\[1pt]
    \hline
    \rule{0pt}{12pt}$L_8$ &\rule{-4pt}{12pt} $\sqrt{(x_1+y_2)^2+(y_1-x_2+2)^2}$ & \rule{0pt}{12pt}$L_{14}$ &\rule{-4pt}{12pt} $\sqrt{(x_1-y_2+1)^2+(y_1+x_2-3)^2}$\\[1pt]
    \hline
    \rule{0pt}{12pt}$L_9$ &\rule{-4pt}{12pt} $\sqrt{(y_1+x_2-2)^2+(x_1-y_2+2)^2}$ & \rule{0pt}{12pt}$L_{15}$ &\rule{-4pt}{12pt} $\sqrt{(y_1-x_2-1)^2+(x_1+y_2-3)^2}$\\[1pt]
    \hline
\end{tabular}
\end{center}
\end{corollary}

\begin{proof}
Suppose $\{n_1,n_2,n_3,n_4,n_5,n_6\}$ = $\{1,2,3,4,5,6\}$ such that in nets of our copy of $\mathcal P_6$, $F_{n_1}$ and $F_{n_6}$ are opposite faces, and $F_{n_2}$, $F_{n_3}$, $F_{n_4}$, and $F_{n_5}$ occur in the same counter-clockwise order about $F_{n_1}$ as the faces $F_2$, $F_3$, $F_4$, and $F_5$ occur about the face $F_1$.  For the ease of notation in diagrams, for each $m$ with $4 \leq m \leq 15$, we will let $u_m = p_1(L_m,F_{n_1},F_{n_i})_x$, $v_m =p_1(L_m,F_{n_1},F_{n_i})_y, s_m = p_2(L_m,F_{n_6},F_{n_i})_x$, and $t_m =p_2(L_m,F_{n_6},F_{n_i})_y$.

Since each of the four landscapes $L_4$, $L_5$, $L_6$, and $L_7$ have the same general structure, if we apply Lemma \ref{Cube_Shared_Face} a sufficient number of times so that the face $F_{n_i}$ is the shared face for the representations of $p_1$ and $p_2$ being considered, then the same transformation takes us from $p_2$'s standard position with respect to this representation to $p_2$'s new position in the orientation $(L_m, F_{n_1}, F_{n_i})$. Specifically, in each case, since $p_2$'s home-face is $F_{n_6}$ and shared-face in this representation is $F_{n_i}$, in the orientation $(L_m, F_{n_1}, F_{n_i})$, the point $p_2$ has been rotated $180$ degrees about the origin, shifted $1$ unit to the right, and shifted 1 unit down from $p_2$'s standard position with respect to the representation $(F_{n_6},F_{n_i},s_m,t_m)$ yielding a result of $(1-s_m,-t_m-1)$. 

In $L_4$, since $n_i=n_2$, we do not need to apply Lemma \ref{Cube_Shared_Face}. Thus, $p_1(L_4,F_{n_1},F_{n_2}) = (x_1,y_1)$, after applying the transformation  $p_2(L_4, F_{n_1}, F_{n_2}) = (1-x_2, -y_2 - 1)$, and we get that
\[\abs{T_{L_4}(p_1,p_2)} = \sqrt{(x_1+x_2-1)^2+(y_1+y_2+1)^2}.\]

In $L_5$, by application of Lemma \ref{Cube_Shared_Face}, we see that $p_1= (F_{n_1}, F_{n_3}, 1-y_1,x_1)$ and $p_2= (F_{n_6}, F_{n_3}, y_2,1-x_2)$. Thus, after application of the transformation  $p_2(L_5,F_{n_1},F_{n_3}) = (1-y_2,x_2-2)$ and we get that 
\[\abs{T_{L_5}(p_1, p_2)} = \sqrt{(y_1-y_2)^2+(x_1-x_2+2)^2}.\]

In $L_6$, also by application of Lemma \ref{Cube_Shared_Face}, we see that $p_1 = (F_{n_1},F_{n_4},y_1,1-x_1)$ and $p_2 = (F_{n_6},F_{n_4},1-y_2,x_2)$. Thus, after application of the transformation $p_2(L_6,F_{n_1},F_{n_4}) = (y_2,-x_2-1)$ and we get that
\[\abs{T_{L_6}(p_1, p_2)} = \sqrt{(y_1-y_2)^2+(x_1 -x_2-2)^2}.\]

In $L_7$, also by application of Lemma \ref{Cube_Shared_Face}, we see that $p_1 = (F_{n_1},F_{n_5},1 - x_1,1-y_1)$ and $p_2 = (F_{n_6},F_{n_5},1 - x_2,1-y_2)$. Thus, after application of the transformation $p_2(L_7,F_{n_1},F_{n_5}) = (x_2,y_2-2)$ and we get that
\[\abs{T_{L_7}(p_1, p_2)} = \sqrt{( x_1+x_2-1)^2+(y_1+y_2-3)^2}.\]

As before, landscapes $L_8$, $L_9$, $L_{10}$, and $L_{11}$ have the same general structure, so if we apply Lemma \ref{Cube_Shared_Face} a sufficient number of times so that the face $F_{n_i}$ is the shared face for the representations of $p_1$ and $p_2$ being considered, then the same transformation takes us from $p_2$'s standard position with respect to this representation to $p_2$'s new position in the orientation $(L_m, F_{n_1}, F_{n_i})$. Specifically, in each case, since $p_2$'s home-face is $F_{n_6}$ and shared-face in this representation is $F_{n_i}$, in the orientation $(L_m, F_{n_1}, F_{n_i})$, the point $p_2$ has been rotated $90$ degrees counterclockwise about the origin and shifted $2$ units down from $p_2$'s standard position with respect to the representation $(F_{n_6},F_{n_i},s_m,t_m)$ yielding a result of $(-t_m,s_m-2)$.

In $L_8$, since $n_i=n_2$, we do not need to apply Lemma \ref{Cube_Shared_Face}.  Thus, $p_1(L_8,F_{n_1},F_{n_2}) = (x_1,y_1)$, after applying the transformation $p_2(L_8, F_{n_1}, F_{n_2})=(-y_2, x_2-2)$ and we get that
\[\abs{T_{L_8}(p_1, p_2)} = \sqrt{(x_1+y_2)^2+(y_1-x_2+2)^2}.\]

In $L_9$, by application of Lemma \ref{Cube_Shared_Face}, we see that $p_1 = (F_{n_1},F_{n_3},1 - y_1,x_1)$ and $p_2 = (F_{n_6},F_{n_3},y_2,1-x_2)$. Thus, after application of the transformation $p_2(L_9,F_{n_1},F_{n_3}) = (x_2-1,y_2-2)$ and we get that
\[\abs{T_{L_9}(p_1, p_2)} = \sqrt{(y_1+x_2-2)^2+(x_1-y_2+2)^2}.\]

In $L_{10}$, also by application of Lemma \ref{Cube_Shared_Face}, we see that $p_1 = (F_{n_1},F_{n_5},1 - x_1,1-y_1)$ and $p_2 = (F_{n_6},F_{n_5},1-x_2,1-y_2)$. Thus after application of the transformation $p_2(L_{10},F_{n_1},F_{n_5}) = (y_2-1,-x_2-1)$ and we get that
\[\abs{T_{L_{10}}(p_1, p_2)} = \sqrt{(x_1+y_2-2)^2+(y_1-x_2-2)^2}.\]

In $L_{11}$, also by application of Lemma \ref{Cube_Shared_Face}, we see that $p_1 = (F_{n_1},F_{n_4},y_1,1-x_1)$ and $p_2 = (F_{n_6},F_{n_4},1-y_2,x_2)$. Thus after application of the transformation $p_2(L_{11},F_{n_1},F_{n_4}) = (-x_2,-y_2-1)$ and we get that
\[\abs{T_{L_{11}}(p_1, p_2)}= \sqrt{(y_1+x_2)^2+(x_1-y_2-2)^2}.\]

Once more, the landscapes $L_{12}$, $L_{13}$, $L_{14}$, and $L_{15}$ have the same general structure, and so if we apply Lemma \ref{Cube_Shared_Face} a sufficient number of times so that the face $F_{n_i}$ is the shared face for the representations of $p_1$ and $p_2$ being considered, then the same transformation takes us from $p_2$'s standard position with respect to this representation to $p_2$'s new position in the orientation $(L_m, F_{n_1}, F_{n_i})$. Specifically, in each case, since $p_2$'s home-face is $F_{n_6}$ and shared-face in this representation is $F_{n_i}$, in the orientation $(L_m, F_{n_1}, F_{n_i})$, the point $p_2$ has been rotated $90$ degrees clockwise about the origin, shifted $1$ unit down, and shifted 1 unit to the right from $p_2$'s standard position with respect to the representation $(F_{n_6},F_{n_i},s_m,t_m)$ yielding a result of $(t_m+1,-s_m-1)$.

In $L_{12}$, since $n_i=n_2$, we do not need to apply Lemma \ref{Cube_Shared_Face}.  Thus, $p_1(L_{12},F_{n_1},F_{n_2}) = (x_1,y_1)$, after applying the transformation $p_2(L_{12}, F_{n_1}, F_{n_2})=(y_2+1,-x_2-1)$ and we get that 
\[\abs{T_{L_{12}}(p_1, p_2)} = \sqrt{(x_1-y_2-1)^2+(y_1+x_2+1)^2}.\]

In $L_{13}$, by application of Lemma \ref{Cube_Shared_Face}, we see that $p_1 = (F_{n_1},F_{n_3},1 - y_1,x_1)$ and $p_2 = (F_{n_6},F_{n_3},y_2,1-x_2)$. Thus after application of the transformation  $p_2(L_{13},F_{n_1},F_{n_3}) = (2-x_2,-y_2-1)$ and we get that
\[\abs{T_{L_{13}}(p_1, p_2)} = \sqrt{(y_1-x_2+1)^2+(x_1+y_2+1)^2}.\]

In $L_{14}$, also by application of Lemma \ref{Cube_Shared_Face}, we see that $p_1 = (F_{n_1},F_{n_5},1 - x_1,1-y_1)$ and $p_2 = (F_{n_6},F_{n_5},1-x_2,1-y_2)$.  Thus after application of the transformation
$p_2(L_{14},F_{n_1},F_{n_5}) = (2-y_2,x_2-2)$ and we get that
\[\abs{T_{L_{14}}(p_1, p_2)} = \sqrt{(x_1-y_2+1)^2+(y_1+x_2-3)^2}.\]

Finally, in $L_{15}$ by application of Lemma \ref{Cube_Shared_Face}, we see that $p_1 = (F_{n_1},F_{n_4},y_1,1-x_1)$ and $p_2 = (F_{n_6},F_{n_4},1-y_2,x_2)$. Thus after application of the transformation
$p_2(L_{15},F_{n_1},F_{n_4}) = (x_2+1,y_2-2)$ and we get that
\[\abs{T_{L_{15}}(p_1, p_2)} = \sqrt{(y_1-x_2-1)^2+(x_1+y_2-3)^2}.\] 
\end{proof}

The development of the following lemma is motivated by the convenience and simplicity of considering points on interior edges of a given origin face and destination face of an arbitrary landscape in any convex unit polyhedron. By doing so, we eliminate the need for four variables when constructing arguments relating to trails incident to some landscape. This becomes extremely useful in the proof of Theorem \ref{finaltheorem}.

\begin{lemma}\label{boundary}
Let $F_n$ and $F_m$ be two distinct faces of some landscape $L_i(F_n \rightarrow F_m)$ of some convex unit polyhedron $\mathcal P$.
Then for all points $x$ and $y$ such that $x \in  \overline{\partial F_n \cap \inte (L_i)}$
and $y \in  \overline{\partial F_m \cap \inte (L_i)}$,
there exists some landscape $L$ containing $F_n$ and $F_m$ such that $\abs{T_{L}(x,y)} < \abs{T_{L_i}(x,y)}$ if and only if for every $p_1$ and $p_2$ such that $p_1 \in F_n$ and $p_2 \in F_m$, there is some landscape $L$ such that $\abs{T_{L}(p_1,p_2)} < \abs{T_{L_i}(p_1,p_2)}$.
\end{lemma}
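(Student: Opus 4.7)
The plan is to prove each implication separately. The reverse implication is essentially immediate: any pair $(x,y)$ with $x \in \overline{\partial F_n \cap \inte(L_i)}$ and $y \in \overline{\partial F_m \cap \inte(L_i)}$ automatically lies in $F_n \times F_m$ (since $\partial F_n \subset F_n$ and $\partial F_m \subset F_m$), so the assumption that every face pair admits a strictly shorter landscape specializes directly to these boundary pairs. Any landscape $L$ with $\abs{T_L(x,y)}$ finite necessarily contains both $F_n$ and $F_m$, confirming the containment stipulated in the lemma's hypothesis.

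For the forward implication, fix $p_1 \in F_n$ and $p_2 \in F_m$. If $\abs{T_{L_i}(p_1,p_2)} = \infty$, then compactness of the polyhedral surface together with Theorem \ref{fullgenerality} guarantees a landscape $L'$ whose trail realizes the finite surface distance $d_{\mathcal P}(p_1,p_2)$, which already beats $\abs{T_{L_i}(p_1,p_2)}$. So assume $\abs{T_{L_i}(p_1,p_2)}$ is finite, which means the segment $\overline{p_1 p_2}$ sits inside the orientation of $L_i$. Since $F_n$ and $F_m$ are distinct convex regions, the intersections $\overline{p_1 p_2} \cap F_n$ and $\overline{p_1 p_2} \cap F_m$ are connected subsegments by the convexity of each face; let $x$ denote the far endpoint of the first and $y$ the near endpoint of the second. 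Any edge of $F_n$ crossed by a segment lying inside $L_i$ must be shared with the next face of $L_i$ visited, so $x \in \overline{\partial F_n \cap \inte(L_i)}$, and symmetrically $y \in \overline{\partial F_m \cap \inte(L_i)}$. Collinearity then yields
\[\abs{T_{L_i}(p_1,p_2)} = d(p_1,x) + \abs{T_{L_i}(x,y)} + d(y,p_2),\]
where $d(p_1,x)$ and $d(y,p_2)$ are the Euclidean distances within $F_n$ and $F_m$ respectively.

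Applying the hypothesis to $(x,y)$ produces a landscape $L$ with $\abs{T_L(x,y)} < \abs{T_{L_i}(x,y)}$. Concatenating the straight segment from $p_1$ to $x$ in $F_n$, the trail $T_L(x,y)$, and the straight segment from $y$ to $p_2$ in $F_m$ yields a continuous path on the surface of $\mathcal P$ from $p_1$ to $p_2$ whose total length is strictly smaller than $\abs{T_{L_i}(p_1,p_2)}$. Invoking Theorem \ref{fullgenerality} once more, the shortest surface path between $p_1$ and $p_2$ is itself a trail in some landscape $L'$, with length at most that of the concatenation, so $\abs{T_{L'}(p_1,p_2)} < \abs{T_{L_i}(p_1,p_2)}$, as required.

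I expect the main obstacle to be the careful identification of the exit and entry points $x$ and $y$ and the verification that they lie in the claimed closure sets; this relies on the convexity of each face (so that $\overline{p_1 p_2}$ meets each face in a single subsegment) together with the observation that any edge crossed internally by a segment contained in the orientation of $L_i$ must be shared with an adjacent face of $L_i$, so the crossing point lies in $\inte(L_i)$ (or in its closure, if it happens to occur at a vertex). The concluding concatenation step is then a routine application of Theorem \ref{fullgenerality} together with a triangle-inequality-style comparison.
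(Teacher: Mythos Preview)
Your proof is correct and follows essentially the same approach as the paper: handle the reverse implication by specializing to $p_1=x$, $p_2=y$, and for the forward implication identify the exit and entry points $x,y$ along $T_{L_i}(p_1,p_2)$, apply the hypothesis to obtain a shorter trail between them in some $\hat L$, concatenate with the within-face segments $\overline{p_1x}$ and $\overline{yp_2}$, and invoke Theorem~\ref{fullgenerality}. Your version is marginally more streamlined in that you explicitly dispose of the case $\abs{T_{L_i}(p_1,p_2)}=\infty$ and collapse the paper's two cases (according to whether $T_{\hat O}(p_1,p_2)\subseteq\hat L$) into a single appeal to Theorem~\ref{fullgenerality}.
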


\begin{proof}
We will begin with the forward direction. Suppose for all points $x \in  \overline{\partial F_n \cap \inte (L_i)}$ and $y \in  \overline{\partial F_m \cap \inte (L_i)}$ there is some landscape $L$ so that $\abs{T_{L}(x,y)} < \abs{T_{L_i}(x,y)}$. Let $p_1 \in F_n$, $p_2 \in F_m$ and define $x = \overline{\partial F_n \cap \inte (L_i)} \cap T_{O_i}(p_1,p_2)$, where $O_i$ is an orientation of $L_i$. Similarly, define $y = \overline{\partial F_m \cap \inte (L_i)} \cap T_{O_i}(p_1,p_2)$. Then by assumption we can fix a landscape $\hat{L}$ containing $F_n$ and $F_m$ with $\abs{T_{\hat{L}}(x,y)} < \abs{T_{L_i}(x,y)}$.  Likewise fix an orientation $\hat{O}$ of $\hat{L}$. Now, $\abs{T_{L_i}(p_1,x)} = \abs{T_{\hat{L}}(p_1,x)}$ and $\abs{T_{L_i}(p_2,y)} = \abs{T_{\hat{L}}(p_2,y)}$ since $F_n$ and $F_m$ are both contained in the landscapes $L_i$ and $\hat{L}$. 
Thus, 
\begin{equation*}
\begin{split}
\abs{T_{\hat{L}}(p_1,x)} + \abs{T_{\hat{L}}(x,y)} + \abs{T_{\hat{L}}(p_2,y)} & = \abs{T_{L_i}(p_1,x)} + \abs{T_{\hat{L}}(x,y)} + \abs{T_{L_i}(p_2,y)} \\
& < \abs{T_{L_i}(p_1,x)}+\abs{T_{L_i}(x,y)} + \abs{T_{L_i}(p_2,y)} \\
& = \abs{T_{L_i}(p_1,p_2)}.
\end{split}
\end{equation*}
There are two cases: 

\setcounter{case}{0}

\begin{case}
$T_{\hat{O}}(p_1,p_2) \subseteq \hat{L}$
\end{case}

Let $L=\hat{L}$. We then have that $\abs{T_L(p_1,p_2)} \leq \abs{T_L(p_1,x)} + \abs{T_L(x,y)} + \abs{T_L(p_2,y)}$.
Which implies that $\abs{T_L(p_1,p_2)} < \abs{T_{L_i}(p_1,p_2)}$ by the above strict inequality.

\begin{case}
$T_{\hat{O}}(p_1,p_2) \not\subseteq \hat{L}$
\end{case}

Then $\abs{T_{\hat{L}}(p_1,p_2)}=\infty$, but since the path given by the concatenation $T_{\hat{O}}(p_1,x) ^\frown T_{\hat{O}}(x,y) ^\frown T_{\hat{O}}(y,p_2)$ is still shorter than the trail $T_{O_i}(p_1,p_2)$, it follows that $T_{O_i}(p_1,p_2)$ can not be the shortest path between $p_1$ and $p_2$ on the surface of $\mathcal P$.  Now, by Theorem \ref{fullgenerality}, there must exist an orientation $O$ of $\mathcal P$ so that the shortest path between $p_1$ and $p_2$ is $T_O(p_1,p_2)$. It then follows that there must be a landscape $L$ (of which $O$ is an orientation)
such that $\abs{T_{L}(p_1,p_2)} < \abs{T_{L_i}(p_1,p_2)}.$

Conversely, Suppose for every $p_1\in F_n$ and $p_2\in F_m$, there is some $L$ such that 
$\abs{T_{L}(p_1,p_2)} < \abs{T_{L_i}(p_1,p_2)}$. Let $x \in  \overline{\partial F_n \cap \inte (L_i)}$ and $y \in  \overline{\partial F_m \cap \inte (L_i)}$. Since $F_n$ is closed by definition,  $\overline{\partial F_n \cap \inte(L_i)} \subseteq F_n$ and likewise $\overline{\partial F_m \cap \inte(L_i)} \subseteq F_m$. Then by assumption we can fix a particular $L$ so that $\abs{T_{L}(x,y)} < \abs{T_{L_i}(x,y)}$ which completes the proof. 

\end{proof}

We have now reached a pivotal point in the development of our framework for the cube. The following theorem, while intuitively simple, will enable us to eliminate a potentially vast amount of landscapes that would have otherwise been difficult to eliminate. This will allow us to show that the only valid landscapes are the fifteen developed in Theorems \ref{Cube_Adjacent_Face} and \ref{Cube_Opposite_Face}.

\begin{theorem}\label{finaltheorem}
Suppose $\{n_1,n_2,n_3,n_4,n_5,n_6\}$ $=$ $\{1,2,3,4,5,6\}$ such that in nets of our copy of $\mathcal P_6$ we have that $F_{n_1}$ and $F_{n_6}$ are opposite faces and $F_{n_2}$, $F_{n_3}$, $F_{n_4}$, and $F_{n_5}$ occur about $F_{n_1}$ in the same counter-clockwise order as $F_2$, $F_3$, $F_4$, and $F_5$ occur about $F_1$.  Let $L(F_{n_1} \rightarrow F_{n_i})$ with $i \in \{2,3,4,5,6\}$, be a landscape of $\mathcal P_6$.  If $L \not \in \{L_i\}_{i=1}^{15}$, then $L$ is not valid.
\end{theorem}

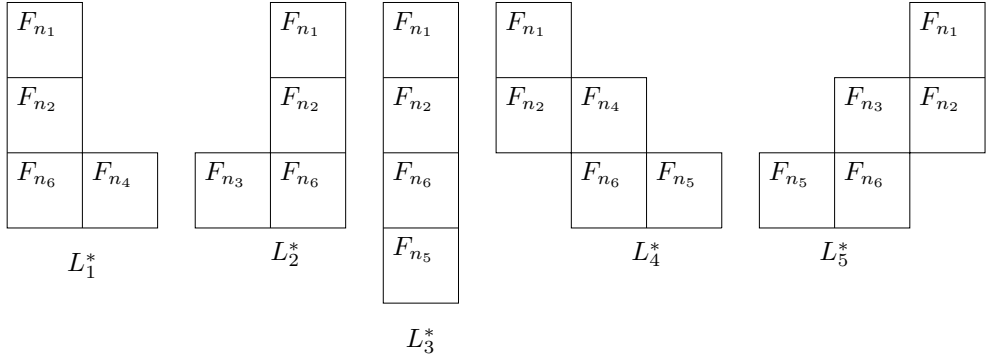
\begin{figure}[h]
  \centering
\begin{tikzpicture}[scale=1]
\draw [line width=0.05mm] (0,0) rectangle (1,4);
\draw [line width=0.05mm] (0,1) -- (1,1);
\draw [line width=0.05mm] (0,2) -- (1,2);
\draw [line width=0.05mm] (0,3) -- (1,3);
\node[below right] at (0,1) {$F_{n_5}$};
\node[below right] at (0,2) {$F_{n_6}$};
\node[below right] at (0,3) {$F_{n_2}$};
\node[below right] at (0,4) {$F_{n_1}$};

\draw [line width=0.05mm] (-2.5,1) -- (-.5,1) -- (-.5,4) -- (-1.5,4) -- (-1.5,2) -- (-2.5,2) -- cycle;
\draw [line width=0.05mm] (-1.5,1) -- (-1.5,2) -- (-.5,2);
\draw [line width=0.05mm] (-1.5,3) -- (-.5,3);
\node[below right] at (-2.5,2) {$F_{n_3}$};
\node[below right] at (-1.5,2) {$F_{n_6}$};
\node[below right] at (-1.5,3) {$F_{n_2}$};
\node[below right] at (-1.5,4) {$F_{n_1}$};

\draw [line width=0.05mm] (-5,1) -- (-3,1) -- (-3,2) -- (-4,2) -- (-4,4) -- (-5,4) -- cycle;
\draw [line width=0.05mm] (-4,1) -- (-4,2) -- (-5,2);
\draw [line width=0.05mm] (-5,3) -- (-4,3);
\node[below right] at (-4,2) {$F_{n_4}$};
\node[below right] at (-5,2) {$F_{n_6}$};
\node[below right] at (-5,3) {$F_{n_2}$};
\node[below right] at (-5,4) {$F_{n_1}$};

\draw [line width=0.05mm] (4.5,1) -- (2.5,1) -- (2.5,2) -- (1.5,2) -- (1.5,4) -- (2.5,4) -- (2.5,2) -- (4.5,2) -- (4.5,1);
\draw [line width=0.05mm] (1.5,3) -- (2.5,3);
\draw [line width=0.05mm] (3.5,1) -- (3.5,3) -- (2.5,3);
\node[below right] at (1.5,3) {$F_{n_2}$};
\node[below right] at (1.5,4) {$F_{n_1}$};
\node[below right] at (2.5,3) {$F_{n_4}$};
\node[below right] at (2.5,2) {$F_{n_6}$};
\node[below right] at (3.5,2) {$F_{n_5}$};

\draw [line width=0.05mm] (5,1) -- (7,1) -- (7,4) -- (8,4) -- (8,2) -- (5,2) -- cycle;
\draw [line width=0.05mm] (6,1) -- (6,3) -- (8,3);
\node[below right] at (5,2) {$F_{n_5}$};
\node[below right] at (6,2) {$F_{n_6}$};
\node[below right] at (6,3) {$F_{n_3}$};
\node[below right] at (7,3) {$F_{n_2}$};
\node[below right] at (7,4) {$F_{n_1}$};

\node at (-4,.5) {$L^{*}_{1}$};
\node at (-1.3,.65) {$L^{*}_{2}$};
\node at (.5,-.5) {$L^{*}_{3}$};
\node at (3.5,.65) {$L^{*}_{4}$};
\node at (6,.65) {$L^{*}_{5}$};

\end{tikzpicture}
\caption{Invalid Landscapes}
  \label{Invalid Landscapes}
\end{figure}

\begin{proof}
First, we state the following claim.

\vspace{0.1in}

\begin{addmargin}[0.87cm]{0cm}
\noindent {\bf Claim.} No landscape in the set $\{L_i^*\}_{i=1}^5$ depicted in Figure \ref{Invalid Landscapes} is valid.

\vspace{0.1in}

Before we prove the claim, it is worth noting that one can show through exhaustion that every landscape of the cube with initial face $F_{n_1}$ and second face $F_{n_2}$ not in the set $\{L_i\}_{i=1}^{15}$ contains an element of $\{L_i^*\}_{i=1}^5$ as a sublandscape; and thus, once the claim is proven, it is clear that no such landscape is valid.  Furthermore, due to the symmetries of the cube any landscape with initial face $F_{n_1}$ can be mapped under rotation, and thus a reidentification of $\{n_2,n_3,n_4,n_5\} \rightarrow \{2,3,4,5\}$, to a landscape with initial face $F_{n_1}$ and second face $F_{n_2}$.  Finally, since path lengths on a cube are invariant under such reidentifications, it follows that any landscape with initial face $F_{n_1}$ not in the set $\{L_i\}_{i=1}^{15}$ must be invalid.  With this in mind, the following case by case analysis of the family of landscapes $\{L_i^*\}_{i=1}^5$ completes the proof of the theorem.

\vspace{0.1in}

\noindent {\em Proof of Claim.}

\vspace{-0.1in}

\setcounter{case}{0}
\begin{case}
Landscape $L_1^*$
\end{case}

Let $p_1 \in {F_{n_1}}, p_2 \in {F_{n_4}}$ and let $O_1^*$ be some orientation of $L_1^*$. Now let $w = T_{O_1^*}(p_1,p_2)\cap \overline{\{n_1,n_2,n_4\}\{n_1,n_2,n_3\}}$ and $z = T_{O_1^*}(p_1,p_2)\cap  \overline{\{n_2,n_4,n_6\}\{n_4,n_5,n_6\}}$, and let $s$ be the length of the line segment $\overline{w\{n_1,n_2,n_4\}}$ and $t$ be the length of the line segment $\overline{z\{n_2,n_4,n_6\}}$.  This gives us three cases.

\begin{figure}[h]
  \centering
\begin{tikzpicture}[scale=1]
\draw [line width=0.05mm] (-.5,0) rectangle (3.5,2);
\draw [line width=0.05mm] (-.5,0) rectangle (1.5,6);
\draw [line width=0.05mm] (-.5,4) -- (1.5,4);
\filldraw[fill=black,draw=black] (-.5,0) circle (2pt);
\filldraw[fill=black,draw=black] (-.5,2) circle (2pt);
\filldraw[fill=black,draw=black] (-.5,4) circle (2pt);
\filldraw[fill=black,draw=black] (-.5,6) circle (2pt);
\filldraw[fill=black,draw=black] (1.5,0) circle (2pt);
\filldraw[fill=black,draw=black] (1.5,2) circle (2pt);
\filldraw[fill=black,draw=black] (1.5,4) circle (2pt);
\filldraw[fill=black,draw=black] (1.5,6) circle (2pt);
\filldraw[fill=black,draw=black] (3.5,0) circle (2pt);
\filldraw[fill=black,draw=black] (3.5,2) circle (2pt);
\node[below right, inner sep=.5][font = {\small}] at (3.5,2) {$\{n_{1},n_{2},n_{4}\}$};
\node[left][font = {\small}] at (-.5,0) {$\{n_{3},n_{5},n_{6}\}$};
\node[left][font = {\small}] at (-.5,2) {$\{n_{2},n_{3},n_{6}\}$};
\node[left][font = {\small}] at (-.5,4) {$\{n_{1},n_{2},n_{3}\}$};
\node[above][font = {\small}] at (-.5,6) {$\{n_{1},n_{3},n_{5}\}$};
\node[below][font = {\small}] at (1.5,0) {$\{n_{4},n_{5},n_{6}\}$};
\node[above right][font = {\small}] at (1.5,2) {$\{n_{2},n_{4},n_{6}\}$};
\node[right][font = {\small}] at (1.5,4) {$\{n_{1},n_{2},n_{4}\}$};
\node[above][font = {\small}] at (1.5,6) {$\{n_{1},n_{4},n_{5}\}$};
\node[below][font = {\small}] at (3.5,0) {$\{n_{1},n_{4},n_{5}\}$};
\node[above right, inner sep=8] at (-.5,4) {$F_{n_1}$};
\node[below right, inner sep=10] at (-.5,4) {$F_{n_2}$};
\node[below right, inner sep=10] at (-.5,2) {$F_{n_6}$};
\node[below left, inner sep=10] at (3.5,2) {$F_{n_4}$};
\filldraw[fill=black,draw=black] (.5,5) circle (2pt);
\filldraw[fill=black,draw=black] (1.9,.3) circle (2pt);
\node[left][font = {\small}] at (.5,5) {$p_{1}$};
\node[right][font = {\small}] at (1.9,.3) {$p_{2}$};
\draw [line width=0.05mm] (.5,5) -- (1.9,.3);
\filldraw[fill=black,draw=black] (.8,4) circle (2pt);
\filldraw[fill=black,draw=black] (1.5,1.6) circle (2pt);
\node[below left][font = {\small}] at (.915,4) {$w$};
\node[right][font = {\small}] at (1.5,1.6) {$z$};
\draw [line width=0.05mm] (.8,4) rectangle (1.5,4.3);
\draw [line width=0.05mm] (1.5,1.6) rectangle (1.25,2);
\node[above][font = {\small}] at (1.15,4.3) {$s$};
\node[left][font = {\small}] at (1.25,1.775) {$t$};

\draw [line width=0.05mm] (7.5,0) rectangle (9.5,4);
\draw [line width=0.05mm] (7.5,2) rectangle (11.5,4);
\filldraw[fill=black,draw=black] (7.5,0) circle (2pt);
\filldraw[fill=black,draw=black] (7.5,2) circle (2pt);
\filldraw[fill=black,draw=black] (7.5,4) circle (2pt);
\filldraw[fill=black,draw=black] (9.5,0) circle (2pt);
\filldraw[fill=black,draw=black] (9.5,2) circle (2pt);
\filldraw[fill=black,draw=black] (9.5,4) circle (2pt);
\filldraw[fill=black,draw=black] (11.5,2) circle (2pt);
\filldraw[fill=black,draw=black] (11.5,4) circle (2pt);
\node[below][font = {\small}] at (7.5,0) {$\{n_{4},n_{5},n_{6}\}$};
\node[above left, inner sep=1][font = {\small}] at (7.5,2) {$\{n_{2},n_{4},n_{6}\}$};
\node[above][font = {\small}] at (7.5,4) {$\{n_{2},n_{3},n_{6}\}$};
\node[below][font = {\small}] at (9.5,0) {$\{n_{1},n_{4},n_{5}\}$};
\node[below right][font = {\small}] at (9.4,2) {$\{n_{1},n_{2},n_{4}\}$};
\node[above][font = {\small}] at (9.5,4) {$\{n_{1},n_{2},n_{3}\}$};
\node[font = {\small}] at (12.2,1.715) {$\{n_{1},n_{4},n_{5}\}$};
\node[above][font = {\small}] at (11.5,4) {$\{n_{1},n_{3},n_{5}\}$};
\filldraw[fill=black,draw=black] (10.5,3) circle (2pt);
\filldraw[fill=black,draw=black] (7.9,.3) circle (2pt);
\node[right][font = {\small}] at (10.5,3) {$p_{1}$};
\node[right][font = {\small}] at (7.9,.3) {$p_{2}$};
\filldraw[fill=black,draw=black] (9.5,2.53) circle (2pt);
\filldraw[fill=black,draw=black] (7.5,1.6) circle (2pt);
\draw [line width=0.05mm] (10.5,3) -- (7.5,1.6) -- (7.9,.3);
\node[font = {\small}] at (7.7,1.5) {$z$};
\node[font = {\small}] at (9.65,2.75) {$w$};
\draw [line width=0.05mm] (7.5,1.6) rectangle (7.3,2);
\draw [line width=0.05mm] (9.5,2.53) rectangle (9.7,2);
\node[right, inner sep=1][font = {\small}] at (9.7,2.25) {$s$};
\node[left, inner sep=.5][font = {\small}] at (7.25,1.75) {$t$};
\node[below right, inner sep=10] at (7.5,4) {$F_{n_2}$};
\node[below right, inner sep=10] at (9.5,4) {$F_{n_1}$};
\node[above left, inner sep=10] at (9.5,0) {$F_{n_4}$};
\end{tikzpicture}
    \caption{Trail on $L_1^*$ when $t>0$}
\end{figure}

\begin{addmargin}[0.87cm]{0cm}
    {\bf Subcase 1a.} {\em $s<1$ and $t>0$}

\vspace{0.1 in}
 
   Note: if we can find a landscape $\hat{L}$ for which $\abs{T_{\hat{L}}(w,z)} < \abs{T_{L_1^*}(w,z)}$ then, by Lemma \ref{boundary}, there exists a landscape $L$ for which $\abs{T_{L}(p_1,p_2)} < \abs{T_{L_1^*}(p_1,p_2)}$.  In particular, if we let $\hat{L}$ be the landscape $L_2(F_{n_1} \rightarrow F_{n_4})$, then we have that $\abs{T_{L_1^*}(w,z)}=\sqrt{s^2+1+2t+t^2}$ and $\abs{T_{\hat{L}}(w,z)} = \sqrt{s^2+1+2st+t^2}$.  Since $s<1$ and $t>0$, it follows that $2st < 2t$, and so $\abs{T_{\hat{L}}(w,z)} < \abs{T_{L_1^*}(w,z)}$.  Thus $T_{O_1^*}(p_1,p_2)$ does not witness the validity of $L_1^*$.
   \end{addmargin}
   
  \vspace{0.1in} 
   
\begin{addmargin}[0.87cm]{0cm}
    {\bf Subcase 1b.} { $s=1$}
    \vspace{0.1 in}
    
Since $s=1$, we have that $w=\{n_1,n_2,n_3\}$. Since $T_{O_1^*}(p_1,p_2)$ is a line segment with $p_1 \in {F_{n_1}}$ and $p_2 \in {F_{n_4}}$, it follows that $p_1=w$. Thus, $T_{O_1^*}(p_1,p_2)$ is completely contained in $L_2(F_{n_2}\rightarrow F_{n_4})$, a proper sublandscape of $L_1^*$, and so $T_{O_1^*}(p_1,p_2)$ does not witness the validity of $L_1^*$.
\end{addmargin}

\vspace{0.1in}

\begin{figure}[h]
  \centering
\begin{tikzpicture}[scale=1]
\draw [line width=0.05mm] (3,-7) rectangle (5,-3);
\draw [line width=0.05mm] (3,-5) rectangle (7,-3);
\filldraw[fill=black,draw=black] (3,-7) circle (2pt);
\filldraw[fill=black,draw=black] (3,-5) circle (2pt);
\filldraw[fill=black,draw=black] (3,-3) circle (2pt);
\filldraw[fill=black,draw=black] (5,-7) circle (2pt);
\filldraw[fill=black,draw=black] (5,-5) circle (2pt);
\filldraw[fill=black,draw=black] (5,-3) circle (2pt);
\filldraw[fill=black,draw=black] (7,-5) circle (2pt);
\filldraw[fill=black,draw=black] (7,-3) circle (2pt);
\node[left][font = {\small}] at (3,-5) {$\{n_{4},n_{5},n_{6}\} = z$};
\node[below right, inner sep=10] at (3,-3) {$F_{n_2}$};
\node[below right, inner sep=10] at (5,-3) {$F_{n_1}$};
\node[below left, inner sep=10] at (5,-5) {$F_{n_4}$};
\filldraw[fill=black,draw=black] (6.3,-3.6) circle (2pt);
\filldraw[fill=black,draw=black] (3.7,-6.6) circle (2pt);
\filldraw[fill=black,draw=black] (3.7,-5) circle (2pt);
\node[below][font = {\small}] at (6.3,-3.6) {$p_{1}$};
\node[right][font = {\small}] at (3.7,-6.6) {$p_{2}$};
\node[below right][font = {\small}] at (3.7,-5) {$v$};
\draw [line width=0.05mm] (6.3,-3.6) -- (3,-5) -- (3.7,-6.6);
\draw[line width=0.05mm][dash pattern={on 3pt}] (6.3,-3.6) -- (3.7,-5) -- (3.7,-6.6);
\node[below][font = {\small}] at (3,-7) {$\{n_{4},n_{5},n_{6}\}$};
\node[above][font = {\small}] at (3,-3) {$\{n_{2},n_{3},n_{6}\}$};
\node[below][font = {\small}] at (5,-7) {$\{n_{1},n_{4},n_{5}\}$};
\node[below right][font = {\small}] at (4.9,-5) {$\{n_{1},n_{2},n_{4}\}$};
\node[above][font = {\small}] at (5,-3) {$\{n_{1},n_{2},n_{3}\}$};
\node[font = {\small}] at (7.7,-5.285) {$\{n_{1},n_{4},n_{5}\}$};
\node[above][font = {\small}] at (7,-3) {$\{n_{1},n_{3},n_{5}\}$};
\end{tikzpicture}
    \caption{Trail on $L_1^*$ for the special case when $t=0$}
\end{figure}

\begin{addmargin}[0.87cm]{0cm}
    {\bf Subcase 1c.} { $t=0$}
    \vspace{0.1 in}

Again let $\hat{L}$ be the landscape $L_2(F_{n_1} \rightarrow F_{n_4})$ and let $\hat{O}$ be an orientation of $\hat{L}$.  Since $t=0$, we have that $z=\{n_2,n_4,n_6\}$.  In turn, both $T_{\hat{O}}(p_1,z)$ and $T_{O_1^*}(p_1,z)$ lie completely in $F_{n_1} \cup F_{n_2}$.  Similarly, both $T_{\hat{O}}(p_2,z)$ and $T_{O_1^*}(p_2,z)$ lie completely in $F_{n_4}$, and so 
\[\abs{T_{\hat{L}}(p_1,z)}=\abs{T_{L_1^*}(p_1,z)} \text { and } \abs{T_{\hat{L}}(p_2,z)}=\abs{T_{L_1^*}(p_2,z)} .\]
As a result, we have that
\[\abs{T_{L_1^*}(p_1,p_2)}=\abs{T_{\hat{L}}(p_1,z)}+\abs{T_{\hat{L}}(p_2,z)}.\]
Now let $v$ be the point on the line segment $\overline {z\{n_1,n_2,n_4\}}$ such that $\overline{vp_2}\perp\overline{z\{n_1,n_2,n_4\}}$.  Since $\Delta zvp_1$ is an obtuse triangle with $\angle v$ obtuse and $\Delta zvp_2$ is a right triangle with $\angle v$ right, it follows that
\[\abs{T_{\hat{L}}(p_1,v)} < \abs{T_{\hat{L}}(p_1,z)} \text{ and } \abs{T_{\hat{L}}(p_2,v)} < \abs{T_{\hat{L}}(p_2,z)}.\]
Thus, we have that
\[\abs{T_{\hat{L}}(p_1,v)}+\abs{T_{\hat{L}}(p_2,v)}<\abs{T_{L_1^*}(p_1,p_2)}.\]
It follows that the concatenation $T_{\hat{O}}(p_1,v) ^\frown T_{\hat{O}}(p_2,v)$ is a path on the surface of $\mathcal P_6$ between $p_1$ and $p_2$ whose length is shorter than $T_{O_1^*}(p_1,p_2)$, and thus by Theorem \ref{fullgenerality} there must exist an orientation $O$ such that $T_O(p_1,p_2)$ is shorter than $T_{O_1^*}(p_1,p_2)$.  Due to this, $T_{O_1^*}(p_1,p_2)$ does not witness the validity of $L_1^*$.
\end{addmargin}

\vspace{0.1in}

\noindent Since in each of the three cases $T_{O_1^*}(p_1,p_2)$ does not witness the validity of $L_1^*$, and $p_1$ and $p_2$ were arbitrarily chosen it follows that $L_1^*$ can not be a valid landscape.

\begin{case}
Landscape $L_2^*$
\end{case}

Due to the symmetries of $\mathcal P_6$, Case 2 is, up to rigid transformation and relabeling, identical to Case 1 and thus an immediate result of the proof of Case 1.

\begin{case}
Landscape $L_3^*$
\end{case}

Let $p_1 \in {F_{n_1}}, p_2 \in {F_{n_5}}$ and let $O_3^*$ be some orientation of $L_3^*$.  Now let $w = T_{O_3^*}(p_1,p_2)\cap \overline{\{n_1,n_2,n_4\}\{n_1,n_2,n_3\}}$ and $z = T_{O_3^*}(p_1,p_2)\cap \overline{\{n_3,n_5,n_6\}\{n_4,n_5,n_6\}}$. Then we have two cases.

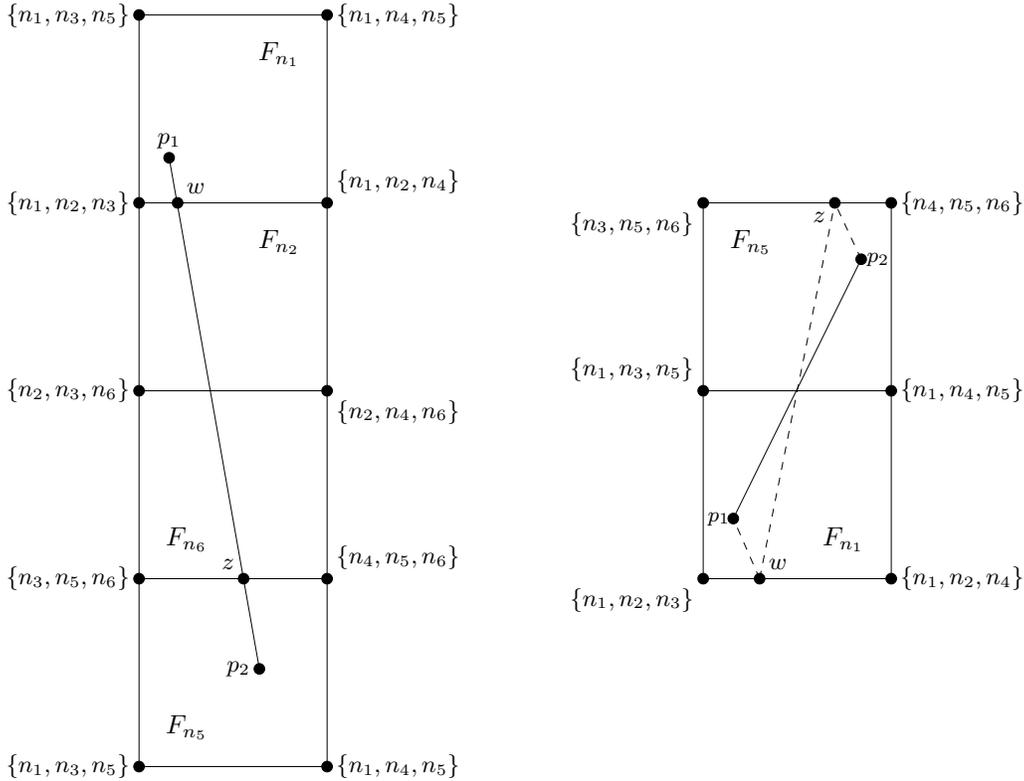
\begin{figure}[h]
  \centering
\begin{tikzpicture}[scale=1]
\draw [line width=0.05mm] (-.625,0) rectangle (1.875,10);
\filldraw[fill=black,draw=black] (-.625,0) circle (2pt);
\filldraw[fill=black,draw=black] (-.625,2.5) circle (2pt);
\filldraw[fill=black,draw=black] (-.625,5) circle (2pt);
\filldraw[fill=black,draw=black] (-.625,7.5) circle (2pt);
\filldraw[fill=black,draw=black] (-.625,10) circle (2pt);
\filldraw[fill=black,draw=black] (1.875,0) circle (2pt);
\filldraw[fill=black,draw=black] (1.875,2.5) circle (2pt);
\filldraw[fill=black,draw=black] (1.875,5) circle (2pt);
\filldraw[fill=black,draw=black] (1.875,7.5) circle (2pt);
\filldraw[fill=black,draw=black] (1.875,10) circle (2pt);
\draw [line width=0.05mm] (-.625,2.5) -- (1.875,2.5);
\draw [line width=0.05mm] (-.625,5) -- (1.875,5);
\draw [line width=0.05mm] (-.625,7.5) -- (1.875,7.5);
\node[left][font = {\small}] at (-.625,0) {$\{n_{1},n_{3},n_{5}\}$};
\node[left][font = {\small}] at (-.625,2.5) {$\{n_{3},n_{5},n_{6}\}$};
\node[left][font = {\small}] at (-.625,5) {$\{n_{2},n_{3},n_{6}\}$};
\node[left][font = {\small}] at (-.625,7.5) {$\{n_{1},n_{2},n_{3}\}$};
\node[left][font = {\small}] at (-.625,10) {$\{n_{1},n_{3},n_{5}\}$};
\node[right][font = {\small}] at (1.875,0) {$\{n_{1},n_{4},n_{5}\}$};
\node[above right][font = {\small}] at (1.875,2.5) {$\{n_{4},n_{5},n_{6}\}$};
\node[below right][font = {\small}] at (1.875,5) {$\{n_{2},n_{4},n_{6}\}$};
\node[above right][font = {\small}] at (1.875,7.5) {$\{n_{1},n_{2},n_{4}\}$};
\node[right][font = {\small}] at (1.875,10) {$\{n_{1},n_{4},n_{5}\}$};
\node[above right,inner sep=10] at (-.625,0) {$F_{n_5}$};
\node[above right, inner sep=10] at (-.625,2.5) {$F_{n_6}$};
\node[below left, inner sep=10] at (1.875,7.5) {$F_{n_2}$};
\node[below left, inner sep=10] at (1.875,10) {$F_{n_1}$};
\filldraw[fill=black,draw=black] (-.225,8.3) circle (2pt);
\filldraw[fill=black,draw=black] (1.475,1.75) circle (2pt);
\node[above][font = {\small}] at (-.225,8.3) {$p_{1}$};
\node[left][font = {\small}] at (1.475,1.75) {$p_{2}$};
\draw [line width=0.05mm] (-.225,8.3) -- (1.475,1.75);
\filldraw[fill=black,draw=black] (-.02,7.5) circle (2pt);
\filldraw[fill=black,draw=black] (1.28,2.5) circle (2pt);
\node[above left][font = {\small}] at (1.28,2.5) {$z$};
\node[above right][font = {\small}] at (-.02,7.5) {$w$};

\draw [line width=0.05mm] (6.875,2.5) rectangle (9.375,7.5);
\draw [line width=0.05mm] (6.875,5) -- (9.375,5);
\filldraw[fill=black,draw=black] (6.875,2.5) circle (2pt);
\filldraw[fill=black,draw=black] (6.875,5) circle (2pt);
\filldraw[fill=black,draw=black] (6.875,7.5) circle (2pt);
\filldraw[fill=black,draw=black] (9.375,2.5) circle (2pt);
\filldraw[fill=black,draw=black] (9.375,5) circle (2pt);
\filldraw[fill=black,draw=black] (9.375,7.5) circle (2pt);
\filldraw[fill=black,draw=black] (8.77,7.5) circle (2pt);
\filldraw[fill=black,draw=black] (7.48,2.5) circle (2pt);
\filldraw[fill=black,draw=black] (7.275,3.3) circle (2pt);
\filldraw[fill=black,draw=black] (8.975,6.75) circle (2pt);
\node[left, inner sep=1][font = {\footnotesize}] at (7.275,3.3) {$p_{1}$};
\node[right, inner sep=2][font = {\footnotesize}] at (8.975,6.75) {$p_{2}$};
\draw [line width=0.05mm] (7.275,3.3) -- (8.975,6.75);
\node[below left][font = {\small}] at (8.77,7.5) {$z$};
\node[above right][font = {\small}] at (7.48,2.5) {$w$};
\draw[line width=0.05mm][dash pattern={on 3pt}] (8.975,6.75) -- (8.77,7.5);
\draw[line width=0.05mm][dash pattern={on 3pt}] (7.48,2.5) -- (7.275,3.3);
\draw[line width=0.05mm][dash pattern={on 3pt}] (7.48,2.5) -- (8.77,7.5);
\node[below right,inner sep=10] at (6.875,7.5) {$F_{n_5}$};
\node[above left, inner sep=10] at (9.375,2.5) {$F_{n_1}$};
\node[above left][font = {\small}] at (6.875,5) {$\{n_{1},n_{3},n_{5}\}$};
\node[below left][font = {\small}] at (6.875,2.5) {$\{n_{1},n_{2},n_{3}\}$};
\node[below left][font = {\small}] at (6.875,7.5) {$\{n_{3},n_{5},n_{6}\}$};
\node[right][font = {\small}] at (9.375,5) {$\{n_{1},n_{4},n_{5}\}$};
\node[right][font = {\small}] at (9.375,2.5) {$\{n_{1},n_{2},n_{4}\}$};
\node[right][font = {\small}] at (9.375,7.5) {$\{n_{4},n_{5},n_{6}\}$};
\end{tikzpicture}
    \caption{Trail on $L_3^*$}
  \label{L_3^*}
\end{figure}

\begin{addmargin}[0.87cm]{0cm}
    {\bf Subcase 3a.} {\em $w=p_1$ and $z=p_2$}
    
   \vspace{0.1 in}
    
  Since $w=p_1$ and $z=p_2$, it follows that $T_{O_3^*}(p_1,p_2)$ is completely contained in $L_1(F_{n_2}\rightarrow F_{n_6})$ and since $L_1(F_{n_2}\rightarrow F_{n_6})$ is a proper sublandscape of $L_3^*$, $T_{O_3^*}(p_1,p_2)$ does not witness the validity of $L_3^*$.
   \end{addmargin}
   
   \vspace{0.1in}
   
\begin{addmargin}[0.87cm]{0cm}
    {\bf Subcase 3b.} { Either $w \ne p_1$ or $z \ne p_2$}
    \vspace{0.1 in}
    
Let $p_1(L_3^*,F_{n_1}, F_{n_2})=(x_1,y_1)$, $p_2(L_3^*,F_{n_1},F_{n_2})=(x_2,y_2)$, $w(L_3^*,F_{n_1},F_{n_2})=(x_w,y_w)$, and $z(L_3^*,F_{n_1},F_{n_2})=(x_z,y_z)$. Also let $L=L_1(F_{n_5} \rightarrow F_{n_1})$, and let $x_1^*$, $x_2^*$, $y_1^*$, and $y_2^*$ be such that $p_1(L,F_{n_5},F_{n_1})=(x_1^*,y_1^*)$ and $p_2(L,F_{n_5},F_{n_1})=(x_2^*,y_2^*)$. Note, that $\abs{x_1-x_2}=\abs{x_1^*-x_2^*}$, and also note that 
\begin{equation}\label{x^*,y^*}
    \begin{split}
        \abs{y_1-y_2}=&2+\abs{y_2-y_z}+\abs{y_1-y_w} \hspace{0.1in} \text{ and} \\
        \abs{y_1^*-y_2^*}=&2-\abs{y_2-y_z}-\abs{y_1-y_w}.
    \end{split}
\end{equation}
Calculating the length of the two relevant trails, we see that 
\begin{equation}\label{T^*}
    \begin{split}
        \abs{T_{L_3^*}(p_1,p_2)}=& \sqrt{\abs{x_1-x_2}^2+\abs{y_1-y_2}^2}
        \hspace{0.1in} \text{ and} \\
        \abs{T_L(p_1,p_2)}=&\sqrt{\abs{x_1^*-x_2^*}^2+\abs{y_1^*-y_2^*}^2}
    \end{split}
\end{equation}
Since either $w \ne p_1$ or $z \ne p_2$, it follows that either $\abs{y_2-y_z}>0$ or $\abs{y_1-y_w}>0$. Due to this, from equation \ref{x^*,y^*} we have that $\abs{y_1-y_2}>\abs{y_1^*-y_2^*}$. Finally, from equation \ref{T^*} we have that $\abs{T_{L_3^*}(p_1,p_2)}> \abs{T_L(p_1,p_2)}$, and thus $T_{O_3^*}(p_1,p_2)$ does not witness the validity of $L_3^*$.

\end{addmargin}

\vspace{0.1in}

\noindent Again, in either case, since $p_1$ and $p_2$ were chosen arbitrarily it follows that $L_3^*$ can not be a valid landscape.

\begin{case} Landscape $L_4^*$
\end{case}

Let $p_1 \in {F_{n_1}}, p_2 \in {F_{n_5}}$ and let $O_4^*$ be some orientation of $L_4^*$.  Now let $w = T_{O_4^*}(p_1,p_2)\cap  \overline{\{n_1,n_2,n_4\}\{n_1,n_2,n_3\}}$ and $z = T_{O_4^*}(p_1,p_2)\cap \overline{\{n_4,n_5,n_6\}\{n_3,n_5,n_6\}}$. We will have two cases.

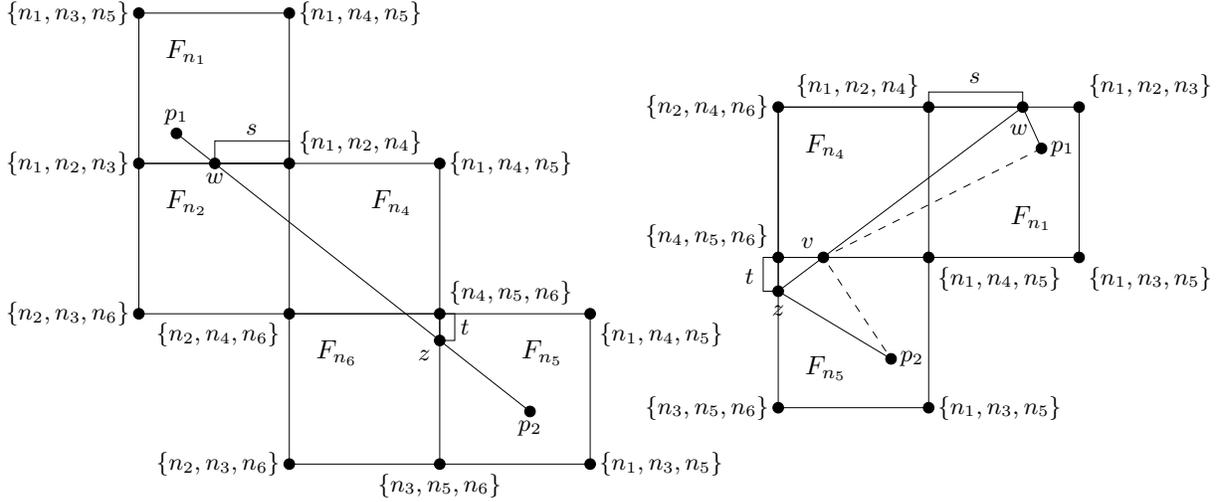
\begin{figure}[h]
  \centering
\begin{tikzpicture}[scale=1]
\draw [line width=0.05mm] (0,0) rectangle (4,2);
\draw [line width=0.05mm] (-2,2) rectangle (2,4);
\draw [line width=0.05mm] (-2,4) rectangle (0,6);
\draw [line width=0.05mm] (0,4) -- (0,2);
\draw [line width=0.05mm] (2,2) -- (2,0);
\filldraw[fill=black,draw=black] (-2,2) circle (2pt);
\filldraw[fill=black,draw=black] (-2,4) circle (2pt);
\filldraw[fill=black,draw=black] (-2,6) circle (2pt);
\filldraw[fill=black,draw=black] (0,0) circle (2pt);
\filldraw[fill=black,draw=black] (0,2) circle (2pt);
\filldraw[fill=black,draw=black] (0,4) circle (2pt);
\filldraw[fill=black,draw=black] (0,6) circle (2pt);
\filldraw[fill=black,draw=black] (2,0) circle (2pt);
\filldraw[fill=black,draw=black] (2,2) circle (2pt);
\filldraw[fill=black,draw=black] (2,4) circle (2pt);
\filldraw[fill=black,draw=black] (4,0) circle (2pt);
\filldraw[fill=black,draw=black] (4,2) circle (2pt);
\node[left][font = {\small}] at (-2,6) {$\{n_{1},n_{3},n_{5}\}$};
\node[left][font = {\small}] at (-2,4) {$\{n_{1},n_{2},n_{3}\}$};
\node[left][font = {\small}] at (-2,2) {$\{n_{2},n_{3},n_{6}\}$};
\node[right][font = {\small}] at (0,6) {$\{n_{1},n_{4},n_{5}\}$};
\node[above right][font = {\small}] at (0,4) {$\{n_{1},n_{2},n_{4}\}$};
\node[below left][font = {\small}] at (0,2) {$\{n_{2},n_{4},n_{6}\}$};
\node[left][font = {\small}] at (0,0) {$\{n_{2},n_{3},n_{6}\}$};
\node[below][font = {\small}] at (2,0) {$\{n_{3},n_{5},n_{6}\}$};
\node[above right][font = {\small}] at (2,2) {$\{n_{4},n_{5},n_{6}\}$};
\node[right][font = {\small}] at (2,4) {$\{n_{1},n_{4},n_{5}\}$};
\node[right][font = {\small}] at (4,0) {$\{n_{1},n_{3},n_{5}\}$};
\node[below right][font = {\small}] at (4,2) {$\{n_{1},n_{4},n_{5}\}$};
\node[below right, inner sep=10] at (-2,6) {$F_{n_1}$};
\node[below right, inner sep=10] at (-2,4) {$F_{n_2}$};
\node[below left, inner sep=10] at (2,4) {$F_{n_4}$};
\node[below right, inner sep=10] at (0,2) {$F_{n_6}$};
\node[below left, inner sep=10] at (4,2) {$F_{n_5}$};
\filldraw[fill=black,draw=black] (3.2,.7) circle (2pt);
\filldraw[fill=black,draw=black] (-1.5,4.4) circle (2pt);
\node[above][font = {\small}] at (-1.5,4.4) {$p_{1}$};
\node[below][font = {\small}] at (3.2,.7) {$p_{2}$};
\draw [line width=0.05mm] (3.2,.7) -- (-1.5,4.4);
\filldraw[fill=black,draw=black] (-.99,4) circle (2pt);
\filldraw[fill=black,draw=black] (2,1.644531915) circle (2pt);
\node[below][font = {\small}] at (-.99,4) {$w$};
\node[below left][font = {\small}] at (2,1.644531915) {$z$};
\draw [line width=0.05mm] (-.99,4) rectangle (0,4.3);
\draw [line width=0.05mm] (2,1.644531915) rectangle (2.2,2);
\node[right,inner sep=2][font = {\small}] at (2.2,1.82) {$t$};
\node[above,inner sep=2][font = {\small}] at (-.5,4.3) {$s$};

\draw [line width=0.05mm] (6.5,.75) rectangle (8.5,4.75);
\draw [line width=0.05mm] (6.5,2.75) rectangle (10.5,4.75);
\filldraw[fill=black,draw=black] (6.5,.75) circle (2pt);
\filldraw[fill=black,draw=black] (6.5,2.75) circle (2pt);
\filldraw[fill=black,draw=black] (6.5,4.75) circle (2pt);
\filldraw[fill=black,draw=black] (8.5,.75) circle (2pt);
\filldraw[fill=black,draw=black] (8.5,2.75) circle (2pt);
\filldraw[fill=black,draw=black] (8.5,4.75) circle (2pt);
\filldraw[fill=black,draw=black] (10.5,2.75) circle (2pt);
\filldraw[fill=black,draw=black] (10.5,4.75) circle (2pt);
\filldraw[fill=black,draw=black] (10,4.35) circle (2pt);
\filldraw[fill=black,draw=black] (7.7,1.45) circle (2pt);
\node[right][font = {\small}] at (10,4.35) {$p_{1}$};
\node[right][font = {\small}] at (7.7,1.45) {$p_{2}$};
\draw[line width=0.05mm][dash pattern={on 3pt}] (10,4.35) -- (6.95,2.75) -- (7.7,1.45);
\filldraw[fill=black,draw=black] (6.95,2.75) circle (2pt);
\filldraw[fill=black,draw=black] (6.5,2.4) circle (2pt);
\filldraw[fill=black,draw=black] (9.5,4.75) circle (2pt);
\draw [line width=0.05mm] (6.5,2.4) -- (9.5,4.75);
\node[above left][font = {\small}] at (7.4,2.4) {$v$};
\node[font = {\small}] at (6.6,2.15) {$z$};
\node[below][font = {\small}] at (9.55,4.65) {$w$};
\draw [line width=0.05mm] (9.5,4.75) -- (10,4.35);
\draw [line width=0.05mm] (6.5,2.4) -- (7.7,1.45);
\draw [line width=0.05mm] (8.5,4.95) rectangle (9.5,4.75);
\draw [line width=0.05mm] (6.3,2.4) rectangle (6.5,2.75);
\node[left][font = {\small}] at (6.3,2.5) {$t$};
\node[above][font = {\small}] at (9,4.95) {$s$};
\node[above left, inner sep=10] at (10.5,2.75) {$F_{n_1}$};
\node[below right, inner sep=10] at (6.5,4.75) {$F_{n_4}$};
\node[above right, inner sep=10] at (6.5,.75) {$F_{n_5}$};
\node[left][font = {\small}] at (6.5,4.75) {$\{n_{2},n_{4},n_{6}\}$};
\node[above left][font = {\small}] at (6.5,2.75) {$\{n_{4},n_{5},n_{6}\}$};
\node[left][font = {\small}] at (6.5,.75) {$\{n_{3},n_{5},n_{6}\}$};
\node[below right][font = {\small}] at (8.5,2.75) {$\{n_{1},n_{4},n_{5}\}$};
\node[right][font = {\small}] at (8.5,.75) {$\{n_{1},n_{3},n_{5}\}$};
\node[above left][font = {\small}] at (8.5,4.75) {$\{n_{1},n_{2},n_{4}\}$};
\node[above right][font = {\small}] at (10.5,4.75) {$\{n_{1},n_{2},n_{3}\}$};
\node[below right][font = {\small}] at (10.5,2.75) {$\{n_{1},n_{3},n_{5}\}$};
\end{tikzpicture}
    \caption{Trail on $L_4^*$}
\end{figure}

\vspace{0.1in}

\begin{addmargin}[0.87cm]{0cm}
    {\bf Subcase 4a.} {\em $w=p_1$ and $z=p_2$}
    
   \vspace{0.1 in}
    
  Since $w=p_1$ and $z=p_2$, the trail is completely contained in $L_3(F_{n_2}\rightarrow F_{n_6})$ and since $L_3(F_{n_2}\rightarrow F_{n_6})$ is a proper sublandscape of $L_4^*$, $T_{O_4^*}(p_1,p_2)$ does not witness the validity of $L_4^*$.
   \end{addmargin}
   
 \vspace{0.1in}  
   
\begin{addmargin}[0.87cm]{0cm}
    {\bf Subcase 4b.} { Either $w \ne p_1$ or $z \ne p_2$}
    \vspace{0.1 in}
    
Let $\hat{L}=L_2(F_{n_1}\rightarrow F_{n_5})$ and $\hat{O}=(\hat{L},F_{n_1},F_{n_5})$. Now let $s$ be the length of the line segment $\overline{w\{n_1,n_2,n_4\}}$ and $t$ be the length of the line segment $\overline{z\{n_4,n_5,n_6\}}$. 

Since $\Delta \{n_2,n_4,n_6\}\{n_1,n_2,n_3\}\{n_3,n_5,n_6\}$ is convex and contained in $\hat{L}$, it follows that $T_{\hat{O}}(w,z)$ must be contained in $\hat{L}$ and thus intersect with $\overline{\{n_4,n_5,n_6\}\{n_1,n_4,n_5\}}$.  With this in mind, we can let $v=T_{\hat{O}}(w,z)\cap  \overline{\{n_4,n_5,n_6\}\{n_1,n_4,n_5\}}$. Then, we have that 
\[\abs{T_{L_4^*}(w,z)}=\sqrt{s^2+2+2s+2t+t^2}=\abs{T_{\hat{L}}(w,z)}.\] 
We also have that $\abs{T_{L_4^*}(p_1,w)}=\abs{T_{\hat{L}}(p_1,w)}$ and $\abs{T_{L_4^*}(p_2,z)}=\abs{T_{\hat{L}}(p_2,z)}$. Since either $w \ne p_1$ or $z \ne p_2$, it follows from the triangle inequality that either 
\[\abs{T_{\hat{L}}(p_1,v)}<\abs{T_{\hat{L}}(p_1,w)}+\abs{T_{\hat{L}}(w,v)} \hspace{0.25in} \text{or} \hspace{0.25in} \abs{T_{\hat{L}}(p_2,v)}<\abs{T_{\hat{L}}(p_2,z)}+\abs{T_{\hat{L}}(z,v)}.\]
Furthermore, since $\abs{T_{\hat{L}}(w,z)}=\abs{T_{\hat{L}}(w,v)}+\abs{T_{\hat{L}}(v,z)}$, it follows that 
\begin{equation*}
\begin{split}
    \abs{T_{\hat{L}}(p_1,v)}+\abs{T_{\hat{L}}(p_2,v)}&<\abs{T_{\hat{L}}(p_1,w)}+\abs{T_{\hat{L}}(w,v)}+\abs{T_{\hat{L}}(p_2,z)}+\abs{T_{\hat{L}}(z,v)}\\
    &=\abs{T_{L_4^*}(w,z)}+\abs{T_{L_4^*}(p_1,w)}+\abs{T_{L_4^*}(p_2,z)}.
\end{split}
\end{equation*}
It follows that the concatenation $T_{\hat{O}}(p_1,v) ^\frown T_{\hat{O}}(p_2,v)$ is a path on the surface of $\mathcal P_6$ between $p_1$ and $p_2$ whose length is shorter than $T_{O_4^*}(p_1,p_2)$, and thus by Theorem \ref{fullgenerality} there must exist an orientation $O$ such that $T_O(p_1,p_2)$ is shorter than $T_{O_4^*}(p_1,p_2)$.  Due to this, $T_{O_4^*}(p_1,p_2)$ does not witness the validity of $L_4^*$.
\end{addmargin}
 
\vspace{0.1in}

\noindent Once more, in either case, since $p_1$ and $p_2$ were chosen arbitrarily, $L_4^*$ can not be a valid landscape.

\begin{case}
Landscape $L_5^*$

\end{case}
Due to the symmetries of $\mathcal P_6$, Case 5 is, up to rigid transformation and relabeling, identical to Case 4 and thus an immediate result of the proof of Case 4. $\Box$

\vspace{0.1in}

\end{addmargin}

\noindent Having shown that no landscape in the family $\{L_i^*\}_{i=1}^5$ is valid, we have thus completed our proof. 
\end{proof}

With the previous theorem in mind, and the collection of landscapes which may be valid narrowed down substantially, we can now confirm that the landscapes $\{L_i\}_{i=1}^{15}$ are each valid simply by providing points which witness their validity. 

\begin{corollary}
The cube has $15$ valid landscapes $\{L_i\}_{i=1}^{15}$.
\end{corollary}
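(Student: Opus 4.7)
The plan is to combine Theorem \ref{finaltheorem} with an explicit demonstration of validity for each of the fifteen landscapes. Theorem \ref{finaltheorem} already shows that any landscape of $\mathcal{P}_6$ not in $\{L_i\}_{i=1}^{15}$ fails to be valid, so the valid landscapes form a subset of $\{L_i\}_{i=1}^{15}$. It remains to establish the reverse inclusion by producing, for each $i$ with $1 \leq i \leq 15$, a witnessing pair of points $p_1, p_2$ such that $|T_{L_i}(p_1,p_2)| = d_{\mathcal P_6}(p_1,p_2)$ and no proper sublandscape $L_j \subsetneq L_i$ containing both points achieves the same trail length. This parallels the strategy used at the end of the proof of Theorem \ref{tet_valid} for the tetrahedron.

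My second step is to exploit symmetry to reduce the number of witness constructions. Under the rotational symmetries of the cube about the axis through the centers of $F_{n_1}$ and $F_{n_6}$ (and the reflections swapping the two orientations), the fifteen landscapes partition into a handful of equivalence classes: $\{L_1\}$ by itself, the mirror pair $\{L_2, L_3\}$, and the three orbits $\{L_4, L_5, L_6, L_7\}$, $\{L_8, L_9, L_{10}, L_{11}\}$, and $\{L_{12}, L_{13}, L_{14}, L_{15}\}$ on the opposite-face side. Since lengths of trails are invariant under the reindexing $\{n_2,n_3,n_4,n_5\} \to \{2,3,4,5\}$, witnessing one representative of each class yields witnesses for the entire class.

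My third step is to supply the actual witness points using the coordinate formulae in Corollaries \ref{cube_adjacent_surface} and \ref{cube_opposite_surface}. For $L_1$, choose $p_1 = (F_{n_1}, F_{n_2}, \tfrac{1}{2}, \varepsilon)$ and $p_2 = (F_{n_2}, F_{n_1}, \tfrac{1}{2}, \varepsilon)$ for a suitably small $\varepsilon > 0$; the $L_1$ formula then returns $2\varepsilon$ while the $L_2$ and $L_3$ formulae each return quantities bounded below by $1$. For $L_2$, place $p_1$ near the vertex $\{n_1,n_2,n_3\}$ and $p_2$ near $\{n_2,n_3,n_6\}$ so that the trail hugs $F_{n_3}$; then directly check against the other fourteen formulae that this $L_2$ trail is uniquely minimal. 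The symmetric witness handles $L_3$. For $L_4$, take $p_1 = (F_{n_1}, F_{n_2}, \tfrac{1}{2}, \tfrac{1}{2}-\varepsilon)$ and $p_2 = (F_{n_6}, F_{n_2}, \tfrac{1}{2}, \tfrac{1}{2}-\varepsilon)$, so that $p_1$ and $p_2$ sit symmetrically across the shared edge between $F_{n_2}$ and the two opposite faces; verify from Corollary \ref{cube_opposite_surface} that $|T_{L_4}(p_1,p_2)|$ is strictly less than the other eleven formulas. For $L_8$, choose $p_1$ close to the vertex $\{n_1,n_2,n_3\}$ and $p_2$ close to the antipodal vertex $\{n_4,n_5,n_6\}$ so that the trail is pulled along the $F_{n_2}$-$F_{n_3}$ corridor; the asymmetry of this positioning forces $L_8$ to beat both the three-face landscapes and the other four-face landscapes. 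An analogous placement near $\{n_1,n_2,n_4\}$ and $\{n_3,n_5,n_6\}$ handles $L_{12}$.

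The main obstacle will be the arithmetic verification that, for each chosen witness pair, the trail length in the intended landscape $L_i$ strictly undercuts every competitor among the remaining fourteen coordinate formulae and is not merely tied with the trail along a proper sublandscape of $L_i$. This is largely routine given the explicit formulae, but it requires enough care in the choice of $\varepsilon$ (or more generally the distance of the witness points from edges and vertices) that the strict inequalities hold. Once this case analysis is completed for one representative per symmetry class, the corollary follows by applying Theorem \ref{finaltheorem} for the reverse direction.
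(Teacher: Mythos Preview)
Your proposal is correct and follows essentially the same strategy as the paper: invoke Theorem \ref{finaltheorem} for one inclusion and exhibit explicit witness pairs for the other, leaving the routine arithmetic verification to the reader. The paper simply lists fifteen concrete coordinate pairs (one per landscape) rather than organizing them via the rotational symmetry classes you describe, but its chosen coordinates are visibly obtained by rotating a single representative within each of your classes, so the two write-ups differ only in presentation.
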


\begin{proof}
One can confirm via a short computation that the following table provides pairs of points which witness the validity of the landscapes $L_1, L_2, L_3, L_4, L_5, L_6, L_7, L_8, L_9, L_{10}, L_{11}, L_{12}, L_{13}, L_{14}$, and $L_{15}$.  Thus all fifteen of these landscapes are valid.  However, as shown by Theorem \ref{finaltheorem}, the cube has no other valid landscapes.

\begin{center}
\begin{tabular}{ |c|c|}
    \hline
    Landscape & Pair of Points Witnessing Validity\\
    \hline
    $L_1$ & $(F_{n_1}, F_{n_2}, 0.5, 0.1),(F_{n_2}, F_{n_1}, 0.5, 0.1)$\\
    \hline
    $L_2$ & $(F_{n_1}, F_{n_2}, 0.1, 0.9), (F_{n_2}, F_{n_1}, 0.9, 0.9)$ \\
    \hline
    $L_3$ & $(F_{n_1}, F_{n_2}, 0.9, 0.9), (F_{n_2}, F_{n_1}, 0.1, 0.9)$ \\
    \hline
    $L_4$ & $(F_{n_1}, F_{n_2}, 0.5, 0.1), (F_{n_6}, F_{n_2}, 0.5, 0.1)$ \\
    \hline
    $L_5$ & $(F_{n_1}, F_{n_2}, 0.1, 0.5), (F_{n_6}, F_{n_2}, 0.9, 0.5)$ \\
    \hline
    $L_6$ & $(F_{n_1}, F_{n_2}, 0.9, 0.5), (F_{n_6}, F_{n_2}, 0.1, 0.5)$ \\
    \hline
    $L_7$ & $(F_{n_1}, F_{n_2}, 0.5, 0.9), (F_{n_6}, F_{n_2}, 0.5, 0.9)$ \\
    \hline
    $L_8$ & $(F_{n_1}, F_{n_2}, 0.2, 0.1), (F_{n_6}, F_{n_2}, 0.9, 0.2)$ \\
    \hline
    $L_9$ & $(F_{n_1}, F_{n_2}, 0.1, 0.8), (F_{n_6}, F_{n_2}, 0.8, 0.9)$ \\
    \hline
    $L_{10}$ & $(F_{n_1}, F_{n_2}, 0.8, 0.9), (F_{n_6}, F_{n_2}, 0.1, 0.8)$ \\
    \hline
    $L_{11}$ & $(F_{n_1}, F_{n_2}, 0.9, 0.2), (F_{n_6}, F_{n_2}, 0.2, 0.1)$ \\
    \hline
    $L_{12}$ & $(F_{n_1}, F_{n_2}, 0.8, 0.1), (F_{n_6}, F_{n_2}, 0.1, 0.2)$ \\
    \hline
    $L_{13}$ & $(F_{n_1}, F_{n_2}, 0.1, 0.2), (F_{n_6}, F_{n_2}, 0.8, 0.1)$ \\
    \hline
    $L_{14}$ & $(F_{n_1}, F_{n_2}, 0.2, 0.9), (F_{n_6}, F_{n_2}, 0.9, 0.8)$ \\
    \hline
    $L_{15}$ & $(F_{n_1}, F_{n_2}, 0.9, 0.8), (F_{n_6}, F_{n_2}, 0.2, 0.9)$ \\
    \hline
\end{tabular}
\end{center}
\end{proof}

Having identified the collection of valid landscapes of $\mathcal P_6$, we provide the following corollary which identifies a way to determine the surface distance between any two points on a cube.

\begin{corollary}
Let $p_1,p_2$ be two distinct points on the cube.
\begin{itemize}
    \item If $p_1 \in F_n \setminus F_m$ and $p_2 \in F_m \setminus F_n$, with $n,m$ distinct and $n+m \neq 7$, then
    \[d_{\mathcal P_6}(p_1,p_2) = d_{\mathcal P_6}^A(p_1,p_2).\]
    \item If $p_1 \in int(F_n)$ and $p_2 \in int(F_m)$ with $n+m=7$, then \[d_{\mathcal P_6}(p_1,p_2) = d_{\mathcal P_6}^O(p_1,p_2).\]
\end{itemize}
\end{corollary}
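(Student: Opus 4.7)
The plan is to obtain each equality by matching the upper bounds already supplied by Corollaries \ref{cube_adjacent_surface} and \ref{cube_opposite_surface} with corresponding lower bounds. The upper bounds $d_{\mathcal{P}_6}(p_1, p_2) \leq d_{\mathcal{P}_6}^A(p_1, p_2)$ and $d_{\mathcal{P}_6}(p_1, p_2) \leq d_{\mathcal{P}_6}^O(p_1, p_2)$ are immediate in their respective cases, so the content lies in establishing the reverse inequalities.

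For the reverse direction, I would first invoke Theorem \ref{fullgenerality} to realize the surface distance as a trail $T_O(p_1,p_2)$, where $O=(L, F_n, F_r)$ is an orientation whose landscape $L$ has origin and destination faces containing $p_1$ and $p_2$ respectively, so $\abs{T_L(p_1,p_2)} = d_{\mathcal{P}_6}(p_1,p_2)$. If $L \notin \{L_i\}_{i=1}^{15}$, then by Theorem \ref{finaltheorem} $L$ is not valid; combined with the fact that $L$ already achieves the minimum (ruling out the $\abs{T_L} > d_{\mathcal{P}_6}$ branch of the invalidity definition), invalidity forces the existence of a proper sublandscape $L'$ of $L$ still containing $p_1$ and $p_2$ with $\abs{T_{L'}(p_1,p_2)} = \abs{T_L(p_1,p_2)}$. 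Since proper sublandscapes have strictly fewer faces, iterating this descent produces a chain of landscapes, all realizing $d_{\mathcal{P}_6}(p_1,p_2)$, which must terminate at a landscape in $\{L_i\}_{i=1}^{15}$.

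To conclude, I would appeal to the constructions in Theorems \ref{Cube_Adjacent_Face} and \ref{Cube_Opposite_Face}, which show that among the fifteen valid landscapes only $L_1, L_2, L_3$ have adjacent endpoint faces while $L_4, \ldots, L_{15}$ have opposite endpoint faces. Since the terminal landscape in the descent inherits $F_n$ and $F_m$ as its origin and destination, the adjacent hypothesis picks out the first family, yielding $d_{\mathcal{P}_6}(p_1,p_2) \geq \min_{1 \leq i \leq 3} \abs{T_{L_i}(p_1,p_2)} = d_{\mathcal{P}_6}^A(p_1,p_2)$, and the opposite hypothesis picks out the second family, yielding $d_{\mathcal{P}_6}(p_1,p_2) \geq d_{\mathcal{P}_6}^O(p_1,p_2)$. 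The main delicacy will be verifying that the descent does indeed preserve the endpoint faces of the landscape: the hypotheses $p_1 \in F_n \setminus F_m$ and $p_2 \in F_m \setminus F_n$ in the adjacent case, together with the interior hypothesis $p_1 \in \inte(F_n), p_2 \in \inte(F_m)$ in the opposite case, are precisely what prevent the boundary-ambiguity issues (where a point lies on some face other than its intended home face) that could otherwise shift which face is considered the origin or destination of a sublandscape during the descent.
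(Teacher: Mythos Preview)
Your proposal is correct and supplies the argument that the paper leaves implicit: the paper states this corollary without proof, treating it as an immediate consequence of Theorem~\ref{finaltheorem} together with Corollaries~\ref{cube_adjacent_surface} and~\ref{cube_opposite_surface}. Your descent argument is exactly the mechanism that makes this implication rigorous, and it matches the paper's intended reasoning.

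One small remark on presentation: in the opposite-face case the descent is in fact vacuous. Since $p_1\in\inte(F_n)$ and $p_2\in\inte(F_m)$, the landscape $L$ produced by Theorem~\ref{fullgenerality} necessarily has $F_n$ and $F_m$ as its endpoint faces, and any \emph{proper} sublandscape of $L$ would omit one of them and hence could not contain both $p_1$ and $p_2$; so the contradiction with invalidity forces $L\in\{L_i\}_{i=4}^{15}$ immediately, with no iteration needed. In the adjacent case the hypotheses $p_1\in F_n\setminus F_m$, $p_2\in F_m\setminus F_n$ do allow $p_1$ or $p_2$ to lie on an edge shared with a third face, so the descent may genuinely pass through one intermediate sublandscape before terminating; your acknowledgement of this ``boundary-ambiguity'' issue is on point, and the finiteness of the face count guarantees termination in any case.
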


Having identified the surface distance of points on the cube, and thus having developed formulae for the length of the shortest paths between two points on the surfaces of both tetrahedra and cubes we have at this point reached the end of the current discussion.  However, the concepts developed here can be applied to any convex unit polyhedron, and thus there are a great many problems in the area left to be explored. In particular, this includes the remaining platonic solids.  At the time of this writing, two of the authors are exploring the problem of determining the surface distance between points on octahedra.  It of course remains to be seen, and could be of some interest, if this concept can be applied to any convex polyhedron or perhaps certain classes of nonconvex polyhedra.

\section*{Acknowledgements}

The research of Emiko Saso and Xin Shi was supported by the Trinity College Summer Research Program.  We would also like to thank the Undergrad Research Incubator Program at the University of North Texas for its support.  

\bibliographystyle{abbrv}
\bibliography{landscapes}

\end{document}